\newcommand*\circled[1]{\tikz[baseline=(char.base)]{
   \node[shape=circle,draw,inner sep=1pt] (char) {#1};}}
\title{Factorizations of $k$-Nonnegative Matrices}
\author{Sunita~Chepuri}
\address{Department of Mathematics, University of Minnesota, Minneapolis, MN 55455}
\email{chepu003@umn.edu}
\author{Neeraja~Kulkarni}
\address{Department of Mathematics, Carleton College, Northfield, MN 55057}
\email{kulkarnin@carleton.edu}
\author{Joe~Suk}
\address{Department of Mathematics, Stony Brook University, Stony Brook, NY 11794}
\email{ybjosuk@gmail.com}
\author{Ewin~Tang}
\address{Department of Mathematics, University of Texas at Austin, 2515 Speedway,
Austin, TX 78712}
\email{ewin@utexas.edu}
\theoremstyle{plain}
\newtheorem{theorem}{Theorem}[subsection]
\let\oldthm\theorem
\renewcommand{\theorem}{\oldthm\normalfont}
\newtheorem*{theorem*}{Theorem}
\newtheorem{proposition}[theorem]{Proposition}
\newtheorem*{proposition*}{Proposition}
\newtheorem{corollary}[theorem]{Corollary}
\newtheorem*{corollary*}{Corollary}
\newtheorem{lemma}[theorem]{Lemma}
\newtheorem*{lemma*}{Lemma}
\let\oldcor\corollary
\renewcommand{\corollary}{\oldcor\normalfont}
\let\oldlemma\lemma
\renewcommand{\lemma}{\oldlemma\normalfont}
\theoremstyle{definition}
\newtheorem*{definition}{Definition}
\newtheorem{remark}[theorem]{Remark}
\numberwithin{equation}{subsection}
\begin{document}
\begin{abstract}
    A matrix is \emph{$k$-nonnegative} if all its minors of size $k$ or less are nonnegative.
    We give a parametrized set of generators and relations for the semigroup of $k$-nonnegative $n\times n$ invertible matrices in two special cases: when $k = n-1$ and when $k = n-2$, restricted to unitriangular matrices.
    For these two cases, we prove that the set of $k$-nonnegative matrices can be partitioned into cells based on their factorizations into generators, generalizing the notion of Bruhat cells from totally nonnegative matrices.
    Like Bruhat cells, these cells are homeomorphic to open balls and have a topological structure that neatly relates closure of cells to subwords of factorizations.
    In the case of $(n-2)$-nonnegative unitriangular matrices, we show the cells form a Bruhat-like CW-complex.
\end{abstract}
\maketitle
%% \tableofcontents
%=================================================
\section{Introduction}
%=================================================

A totally nonnegative (respectively totally positive) matrix is a matrix whose minors are all nonnegative (respectively positive).
Total positivity and nonnegativity are well-studied phenomena and arise in diverse areas such as planar networks, combinatorics, and stochastic processes \cite{fomin-zel}.
We generalize the notion of total nonnegativity and positivity as follows.
A \emph{$k$-nonnegative} (resp. \emph{$k$-positive}) matrix is a matrix where all minors of order $k$ or less are nonnegative (resp. positive).

Our investigation of $k$-nonnegative matrices follows the path of previous work in parametrizing totally nonnegative matrices.
In their 1998 paper \cite{fomin-zel-bruhat}, Fomin and Zelevinsky partition the semigroup of invertible totally nonnegative matrices into cells based on their factorizations into parametrized generators.
These cells form a regular CW-complex whose closure poset is isomorphic to the Bruhat poset on words corresponding to factorizations of matrices in cells.
These amazing and elegant results arise from viewing totally nonnegative matrices in the context of the theory of Lie groups.
The generality to which these results hold motivates us to investigate the extent to which a similar structure can be found in semigroups of $k$-nonnegative matrices.

The idea of $k$-nonnegativity is not new; in fact, $k$-nonnegative and $k$-positive matrices have been the subject of study in several papers by Fallat, Johnson, and others (\cite{fallat-johnson, fallat-johnson-sokal}).
However, these works are largely unconcerned with the semigroup structure of $k$-nonnegative matrices and take considerably different directions than our own approach to understanding these matrices.
This work is, to the authors' knowledge, the first attempt to fully characterize the generators of this semigroup and find an analogous Bruhat cell decomposition as done by Fomin and Zelevinsky in \cite{fomin-zel-bruhat} for the case of totally nonnegative matrices.

Our results are as follows.
We begin by generalizing the Loewner-Whitney theorem to $k$-nonnegative $n\times n$ matrices.
This allows us to compute generators for two semigroups: $(n-1)$-nonnegative invertible matrices and $(n-2)$-nonnegative unitriangular matrices.
In both cases, our generating sets consist of the generators for the corresponding totally nonnegative case (elementary Jacobi matrices for the $(n-1)$ case and unitriangular Chevalley generators for the $(n-2)$ case), as well as a new class of generators, which we call $K$-generators in the $n-1$ case, and $T$-generators in the $n-2$ case.
We will show that it is possible to factor any matrix in the semigroup as a product of these generators, and we also give relations by which we can move between any two factorizations of the same matrix.

As done by Fomin and Zelevinsky \cite{fomin-zel}, we proceed to group matrices into cells based on their factorizations into generators. We show that these cells partition the space and behave well with respect to closure and their parameters; namely, the parametrization of these cells is a homeomorphism and we can obtain all of the elements in the closure of a cell by setting elements in the parametrization to zero (or equivalently, by taking subwords of our factorization word).
In fact, in the $n-2$ case, the cells form a CW-complex.
Surprisingly, throughout this process, the generators, relations, cells, and resulting closure poset can all be simply described and follow fairly naturally from the restrictions on the space.
This suggests that such structure might exist for more cases and even for $k$-nonnegative matrices in general.

Our paper is structured as follows.
In Section~\ref{sec:preliminaries}, we detail some relevant background and proceed to our most general results on factorizations of $k$-nonnegative matrices.
We describe partial factorizations of $k$-nonnegative matrices and give some lemmas necessary for future sections.
Section~\ref{sec:factorization} describes our two special cases: $(n-1)$-nonnegative matrices and $(n-2)$-nonnegative unitriangular matrices. We give specific generating sets for the corresponding semigroups as well as sets of relations.
In Section~\ref{sec:bruhat}, we describe the resulting cells in these two cases, and discuss the topological properties that these cells share with the standard Bruhat cells of totally nonnegative matrices.

%=================================================
\section{Preliminaries}\label{sec:preliminaries}
%=================================================
\subsection{Background}\label{subsec:background}

We begin by establishing some conventions and notation that will be used throughout the paper. We use $[n]$ to refer to the set $\{1,\ldots,n\}$. For any matrix $X$, $X_{I,J}$ refers to the submatrix of $X$ indexed by a subset of its rows $I$ and a subset of its columns $J$, and $\left| X_{I,J}\right|$ will refer to the minor, that is, the determinant of this submatrix. We say a minor $\left|X_{I,J}\right|$ is of \textit{order} $k$ if $|I|=|J|=k$. A minor $\left|X_{I,J}\right|$ is called \emph{solid} if both $I$ and $J$ are intervals. $|X_{I,J}|$ is called \emph{column-solid} if $J$ is an interval.
Unless stated otherwise, all matrices discussed will belong to $GL_n(\mathbb{R})$. We will sometimes abbreviate totally nonnegative as TNN.

The set of all invertible $k$-nonnegative $n\times n$ matrices is closed under multiplication, as can be seen using the Cauchy-Binet formula, and thus forms a semigroup. Similarly, the set of all upper unitriangular $k$-nonnegative matrices form a semigroup (and analogously for lower unitriangular). We would like to study the structure and topology of these semigroups. What are the generators and relations of the semigroup? What topological features does it have when endowed with the standard topology on $GL_n(\mathbb{R})$? We first summarize here the known answers to these questions for the case of $k=n$ (i.e. totally nonnegative matrices). Most of the following summary can be found in \cite{fomin-zel}.

We first discuss the generators of the semigroup of totally nonnegative matrices. A \emph{Chevalley generator} is defined as a matrix with one of the two following forms: matrices that only differ from the identity by having some $a>0$ in the $(i,i+1)$-st entry, denoted $e_i(a)$, and the transpose form, denoted $f_i(a) = e_i(a)^T$. The name ``Chevalley generator'' comes from the fact that $e_i(a)$ generate the semigroup of upper unitriangular totally nonnegative matrices. More generally, \emph{elementary Jacobi matrices} differ from the identity in exactly one positive entry either on, directly above, or directly below the main diagonal. Thus, an elementary Jacobi matrix is a Chevalley generator or a diagonal matrix which differs from the identity in one entry on the main diagonal.

The Loewner-Whitney Theorem (Theorem 2.2.2 of \cite{TNNbook}) states that the elementary Jacobi matrices generate the semigroup of invertible totally nonnegative matrices. Furthermore, one can show that any upper unitriangular matrix $X$ can be factored into Chevalley generators $e_i(a)$ with nonnegative parameters $a\geq 0$. Similarly, any lower unitriangular matrix $X$ can be factored into Chevalley generators $f_i(a)$ with nonnegative parameters $a\geq 0$. These factorizations will allow us to parametrize the entire semigroup.

To do this, we need to discuss Bruhat cells. The following will come primarily from \cite{fomin-zel-bruhat} \S~4.  Let us establish some notation.
We will let $G = GL_n(\mathbb{R})$, although many of the results here (as well as notions of total nonnegativity and total positivity) hold in the broader context where $G$ is any semisimple group.
Let $B^+$ (resp. $B^-$) be the subgroup of upper-triangular (resp. lower-triangular) matrices in $G$.
We can identify $S_n$ with a subgroup of $G$ by identifying an $\omega \in S_n$ with the matrix sending the basis vector $e_i$ to the basis vector $e_{\omega(i)}$.

For any $u\in S_n$, let $B^+uB^+$ (resp. $B^-uB^-$) denote the corresponding double coset. We have decompositions
    \[ G = \bigcup_{u \in S_n} B^+uB^+ = \bigcup_{v \in S_n} B^-vB^- \]
We call a particular double coset $B^+uB^+$ or $B^-vB^-$ a \emph{Bruhat cell} of $G$. We then define \emph{double Bruhat cells} as
    \[ B_{u,v} = B_u^+ \cap B_v^- := B^+uB^+ \cap B^-vB^- \]
so that $G$ is partitioned into these $B_{u,v}$ for $(u,v) \in S_n \times S_n$.

Next, let
$$\mathbb{A}=\langle 1,\ldots,n-1,\circled{1},\ldots,\circled{$n$},\overline{1},\ldots,\overline{n-1}\rangle $$
and let
\begin{align*}
x_{\circled{\tiny $i$}}(t)&=h_i(t)\\
x_i(t)&=e_i(t)\\
x_{\overline{i}}(t)&=f_i(t)
\end{align*}
Thus, the generators of $\mathbb{A}$ correspond to the different types of generators for the semigroup. For any word $\textbf{i}:=(i_1,\ldots,i_{\ell})\in\mathbb{A}$, there is a product map $x_{\textbf{i}}:\mathbb{R}_{>0}^{\ell}\to G$ defined by
$$x_{\textbf{i}}(t_1,\ldots,t_{\ell}):=x_{i_1}(t_1)\cdots x_{i_{\ell}}(t_{\ell})$$
With some conditions imposed on $\textbf{i}$, it turns out that the image of this map describes precisely the totally nonnegative matrices in a particular double Bruhat cell, allowing us to parametrize the double Bruhat cell and, consequently, the semigroup of totally nonnegative matrices. We describe this in more detail by introducing a definition:

\begin{definition}
Let $u,v\in S_n$. A \emph{factorization scheme} of type $(u,v)$ is a word $\textbf{i}$ of length $n+\ell(u)+\ell(v)$ (where $\ell(u)$ denotes the Bruhat length of $u$ in $S_n$) in $\mathbb{A}$ such that the subword of barred (resp. unbarred) entries of $\textbf{i}$ form a reduced word for $u$ (resp. $v$) and such that each circled entry $\circled{$i$}$ is contained exactly once in $\textbf{i}$.
\end{definition}

Next, we have the main result which allows us to parametrize totally nonnegative matrices. For the case of upper unitriangular matrices, it suffices to consider Bruhat cells:

\begin{theorem}[Theorems 2.2.3, 5.1.1, 5.1.4, and 5.4.1 of \cite{Berenstein-Fomin-Zelevinsky}]
Let $N_{\geq 0}$ be the set of $n\times n$ upper unitriangular totally-nonnegative matrices. Then, $N_{\geq 0}\cap B_w^-$ partition $N_{\geq 0}$ as $w$ ranges over $S_n$. Furthermore, each $N_{\geq 0}\cap B_w^-$ is in bijective correspondence with an $\ell(w)$-tuple of positive real numbers via the map $(t_1,\ldots, t_{\ell(w)})\mapsto e_{h_1}(t_1)\cdots e_{h_{\ell(w)}}(t_{\ell(w)})$ where $(h_1,\ldots,h_{\ell(w)})$ is a reduced word for $w$.
\end{theorem}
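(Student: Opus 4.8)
The plan is to prove three things: that the sets $N_{\geq 0}\cap B_w^-$ genuinely partition $N_{\geq 0}$; that for a reduced word $(h_1,\ldots,h_{\ell(w)})$ of $w$ the product map $(t_1,\ldots,t_{\ell(w)})\mapsto e_{h_1}(t_1)\cdots e_{h_{\ell(w)}}(t_{\ell(w)})$ has image exactly $N_{\geq 0}\cap B_w^-$; and that this map is injective on $\mathbb{R}_{>0}^{\ell(w)}$. The partition statement is immediate: the Bruhat decomposition $G=\bigcup_{w\in S_n}B^-wB^-$ is a disjoint union, so intersecting with $N_{\geq 0}$ partitions $N_{\geq 0}$ (some pieces may be empty). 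It remains to identify each nonempty piece and parametrize it.

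To see where a product of Chevalley generators lives, I would start from the base case that $e_i(t)\in B^-s_iB^-$ for every $t\neq 0$: this reduces to a $2\times 2$ computation in the relevant rows and columns, exhibiting $e_i(t)=b_-\,s_i\,b_-'$ with $b_-,b_-'\in B^-$ (equivalently, $e_i(t)$ is not lower triangular, so it cannot lie in the only other cell $B^-B^-$ available in that rank-one situation). Then I would invoke the standard multiplicativity of Bruhat cells: if $\ell(s_iw')=\ell(w')+1$ then $(B^-s_iB^-)(B^-w'B^-)=B^-s_iw'B^-$. Peeling $e_{h_1}(t_1)$ off the left of the product and inducting on $\ell(w)$, using that every prefix $s_{h_1}\cdots s_{h_j}$ of a reduced word has length $j$, yields $e_{h_1}(t_1)\cdots e_{h_{\ell(w)}}(t_{\ell(w)})\in B^-wB^-$ whenever all $t_j>0$. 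Since each $e_i(t)$ is upper unitriangular and totally nonnegative, and these properties are preserved under products by Cauchy--Binet, the image of the product map lies in $N_{\geq 0}\cap B_w^-$.

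For surjectivity, take $x\in N_{\geq 0}\cap B_w^-$. Using the fact recalled above that any upper unitriangular totally nonnegative matrix factors into Chevalley generators with nonnegative parameters, and discarding the factors with parameter $0$, write $x=e_{j_1}(a_1)\cdots e_{j_p}(a_p)$ with all $a_i>0$. By the analysis of the previous paragraph applied to the possibly non-reduced word $\mathbf{j}=(j_1,\ldots,j_p)$, the element $x$ lies in $B^-\theta B^-$ where $\theta$ is the Demazure product of $s_{j_1},\ldots,s_{j_p}$; since $x\in B_w^-$ this forces $\theta=w$ and $p\geq \ell(w)$. I would then reduce $\mathbf{j}$ to the prescribed reduced word $(h_1,\ldots,h_{\ell(w)})$ using the braid relations $e_i(a)e_{i+1}(b)e_i(c)=e_{i+1}(\tfrac{bc}{a+c})e_i(a+c)e_{i+1}(\tfrac{ab}{a+c})$, the commutations $e_i(a)e_j(b)=e_j(b)e_i(a)$ for $|i-j|\geq 2$, and the merging relation $e_i(a)e_i(b)=e_i(a+b)$. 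The crucial observations are that every such move sends positive parameters to positive parameters, and that a word with Demazure product $w$ and $p>\ell(w)$ is non-reduced, hence can be transformed by braid and commutation moves into a word with two adjacent equal letters, which are merged to shorten $\mathbf{j}$; iterating reaches a reduced word for $w$, and a final application of the Tits--Matsumoto theorem matches it with $(h_1,\ldots,h_{\ell(w)})$. This writes $x=e_{h_1}(t_1)\cdots e_{h_{\ell(w)}}(t_{\ell(w)})$ for some $t\in\mathbb{R}_{>0}^{\ell(w)}$.

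The remaining and most delicate step is injectivity. I would recover the parameters from $x$ via the Chamber Ansatz: associating to the reduced word its wiring diagram, the column-solid minors of $x$ indexed by the chambers of the diagram are subtraction-free Laurent monomials in the $t_j$, and suitable ratios of these chamber minors invert the product map, expressing each $t_j$ as a positive rational function of the minors of $x$. This makes the product map injective, indeed a homeomorphism onto its image. Verifying the monomial formulas and the exact correspondence between chambers and minors is the technical heart of the argument; everything else is formal manipulation with $B^-$-cosets and Coxeter combinatorics. I expect this chamber-minor bookkeeping to be the main obstacle. Combining surjectivity with injectivity then gives the asserted bijection between $N_{\geq 0}\cap B_w^-$ and $\mathbb{R}_{>0}^{\ell(w)}$, completing the proof.
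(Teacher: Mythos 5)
The paper itself does not prove this statement: it is quoted as background from Berenstein--Fomin--Zelevinsky, with only the remark that the key is understanding the commutation relations. So your proposal has to be judged against the cited source's argument. The first three steps of your plan are essentially sound: disjointness of the Bruhat decomposition gives the partition; the $2\times 2$ computation showing $e_i(t)\in B^-s_iB^-$ together with $(B^-s_iB^-)(B^-w'B^-)=B^-s_iw'B^-$ when lengths add places the image of $x_{\mathbf h}$ in $N_{\geq 0}\cap B^-wB^-$; and surjectivity follows from the Loewner--Whitney-type factorization plus the subtraction-free merge/commutation/braid moves, once you cite the standard lemma (Tits/Matsumoto, and the fact that a non-reduced word can be brought by braid and commutation moves alone to contain a repeated adjacent letter). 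Your appeal to the Demazure product is an unproven detour, but it is also unnecessary: after reducing to a positive product over a reduced word, disjointness of cells already forces that word to represent $w$.

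The genuine gap is in the injectivity step. It is not true in general that the chamber minors of $x$ itself are Laurent monomials in the parameters $t_j$. For example, take $n=4$ and the reduced word $\mathbf h=(2,1,3,2,1,3)$ for $w_0$; writing $x=e_2(t_1)e_1(t_2)e_3(t_3)e_2(t_4)e_1(t_5)e_3(t_6)$ one finds $x_{12}=t_2+t_5$, $x_{23}=t_1+t_4$, $x_{34}=t_3+t_6$, and under any of the standard chamber conventions some of these (and worse, a five-term chamber minor such as $x_{12}x_{24}-x_{14}$) occur as chamber minors for this wiring diagram. This failure of the ``naive'' ansatz is exactly why Berenstein--Fomin--Zelevinsky introduce the twist map: their Chamber Ansatz expresses the $t_k$ as Laurent monomials in chamber minors of a twisted matrix $y=\eta(x)$, not of $x$, and constructing the twist and proving those formulas is the technical heart of the cited theorems --- precisely the point where your plan, as stated, would stall. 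If you want to avoid the twist, injectivity can instead be obtained more elementarily: either verify monomiality of enough minors for one well-chosen reduced word of each $w$ and transfer to all reduced words via the subtraction-free, invertible braid moves, or recover the parameters one at a time by peeling off the leftmost factor as a critical value of the remaining matrix's minors --- this is exactly the strategy this paper uses for its own cells in the proof of Theorem~\ref{thm:n-1-homeo}.
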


We shall later refer to the image of the product map $(t_1,\ldots,t_{\ell(w)})\mapsto e_{h_1}(t_1)\cdots e_{h_{\ell(w)}}(t_{\ell(w)})$ as $U(w)$. A similar result holds for the general case:

\begin{theorem}[Theorems 4.4 and 4.12 in \cite{fomin-zel-bruhat}]\label{thm:tnn-decomp}
If $\textbf{i}=(i_1,\ldots,i_{\ell})$ is a factorization scheme of type $(u,v)$, then the map $x_{\textbf{i}}$ is a bijection between $\ell$-tuples of positive real numbers and totally nonnegative matrices in the double Bruhat cell $B_{u,v}$.
\end{theorem}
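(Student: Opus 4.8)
The plan is to follow the approach of Fomin and Zelevinsky, for whom this is Theorems 4.4 and 4.12 of \cite{fomin-zel-bruhat}. There are three things to establish: (i) for every vector of positive parameters $(t_1,\dots,t_\ell)$, the matrix $x_{\textbf{i}}(t_1,\dots,t_\ell)$ is totally nonnegative and lies in $B_{u,v}$; (ii) the map $x_{\textbf{i}}$ is injective; and (iii) every totally nonnegative matrix of $B_{u,v}$ lies in its image. Part (i) is the easy direction. Each generator $x_{i_j}(t_j)$ with $t_j>0$ is an elementary Jacobi matrix and hence totally nonnegative, so the product $x_{\textbf{i}}(t)$ is totally nonnegative by the Cauchy--Binet formula. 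For the double-coset membership one uses that $e_i(t)\in B^+$ and $h_i(t)\in B^+\cap B^-$, while $f_i(t)\in B^+ s_i B^+$ (a rank-one check in the relevant copy of $SL_2$); since the barred letters of $\textbf{i}$ spell a reduced word for $u$, building the product up one letter at a time and repeatedly applying the Bruhat-decomposition identity $B^+wB^+\cdot B^+s_iB^+=B^+ws_iB^+$ (valid when $\ell(ws_i)>\ell(w)$) shows $x_{\textbf{i}}(t)\in B^+uB^+$, and the symmetric argument with the unbarred letters gives $x_{\textbf{i}}(t)\in B^-vB^-$.

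For injectivity I would use the \emph{Chamber Ansatz}: given $X=x_{\textbf{i}}(t)$, each parameter $t_j$ can be written as an explicit Laurent monomial in certain flag minors of $X$, where the minors attached to the letter $i_j$ are determined by the position of $i_j$ in $\textbf{i}$ together with $u$ and $v$. Checking that these formulas recover $(t_1,\dots,t_\ell)$ from $X$ is a direct computation using the $SL_2$ relations among the $e_i$, $f_i$, and $h_i$; it shows that $x_{\textbf{i}}$ is injective with regular inverse, and it is here that one sees the parameters are forced to be positive exactly when $X$ is totally nonnegative and genuinely lies in the cell $B_{u,v}$ rather than in a smaller stratum.

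For surjectivity the plan is induction on $\ell(u)+\ell(v)$. In the base case $u=v=e$, the word $\textbf{i}$ is a permutation of $(\circled{1},\dots,\circled{$n$})$, the cell $B_{e,e}$ consists of the diagonal matrices, and the image of $x_{\textbf{i}}$ is precisely the positive diagonal matrices. For the inductive step, take a totally nonnegative $X\in B_{u,v}$ with $\ell(u)+\ell(v)>0$. Using the commutation relations between the $h_m$ and the $e_j$, $f_j$ one may move circled letters to the right and so assume $i_1$ is non-circled, say unbarred with $i_1=j$ and $s_jv<v$. I would then show that there is a unique $t_1\ge 0$ with $X':=e_j(-t_1)X$ totally nonnegative, that in fact $t_1>0$ (it equals the relevant Chamber-Ansatz ratio, which is positive because $X\in B_{u,v}$ forces the corresponding minors to be nonzero), and that $X'\in B_{u,s_jv}$ with $(i_2,\dots,i_\ell)$ a factorization scheme of type $(u,s_jv)$; the barred case is symmetric. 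The inductive hypothesis applied to $X'$, followed by prepending $e_j(t_1)$, completes the step. Alternatively, one can first extract an $LDU$-type factorization to reduce to the upper- and lower-unitriangular cases already recorded in \cite{Berenstein-Fomin-Zelevinsky} above, at the cost of more bookkeeping.

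The main obstacle is the positivity bookkeeping in the surjectivity step: showing that the peeled-off parameter $t_1$ is strictly positive and that total nonnegativity is preserved under $X\mapsto e_j(-t_1)X$. This is exactly where the hypothesis that $X$ sits in the cell $B_{u,v}$ is used, and it requires matching the combinatorics of the reduced words encoded in $\textbf{i}$ against the nonvanishing of the ``initial'' minors that control the factorization. The remaining ingredients are either standard Bruhat-decomposition facts or mechanical $SL_2$ computations.
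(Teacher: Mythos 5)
This statement is quoted background rather than a result the paper proves: it is attributed to Theorems 4.4 and 4.12 of Fomin--Zelevinsky, and the paper explicitly says it does not include the proof, so there is no internal argument to compare yours against. Your sketch is a faithful reconstruction of the standard Fomin--Zelevinsky strategy, and the easy direction is fine as written: Cauchy--Binet gives total nonnegativity of the image, and building the product one letter at a time with $f_i(t)\in B^+s_iB^+$, $e_i(t)\in B^+$ (and the symmetric statements for $B^-$), together with $B^+wB^+\cdot B^+s_iB^+=B^+ws_iB^+$ when $\ell(ws_i)>\ell(w)$, puts $x_{\textbf{i}}(t)$ in $B^+uB^+\cap B^-vB^-$. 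Two cautions on the harder direction. First, the Chamber Ansatz does not express the parameters as Laurent monomials in flag minors of $X$ itself: in Fomin--Zelevinsky the $t_j$ are Laurent monomials in generalized minors of a \emph{twisted} matrix obtained from $X$, and constructing the twist map and verifying its positivity properties is where most of the content of their Theorems 4.4 and 4.12 lives, so describing this as ``a direct computation using the $SL_2$ relations'' substantially undersells the step. Second, your surjectivity-by-peeling induction (extracting $e_j(-t_1)$, showing $t_1>0$, that total nonnegativity is preserved, and that the cell drops exactly to $B_{u,s_jv}$) is a genuine alternative to their route, where surjectivity falls out of the explicit twisted-minor inverse; it can be made to work, but the positivity and cell-membership claims you defer are precisely the nontrivial part, so as it stands this is an accurate plan for the cited proof rather than a complete argument. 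Since the paper itself leans on the citation, that level of detail is consistent with how the result is used here.
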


We do not include the proof of the above two theorems here, but one key to proving that the map $x_{\textbf{i}}$ is a bijection is understanding the commutation relations between the generators of the semigroup. The commutation relations tell us how to move between two different factorizations of a totally nonnegative matrix into elementary Jacobi generators.

For the semigroup of upper unitriangular totally nonnegative matrices, the relations obeyed by the $e_i$'s are the same as the braid relations between adjacent transpositions in the Coxeter presentation of the symmetric group. More precisely, any factorization of an upper unitriangular totally nonnegative matrix into Chevalley generators $e_{i_1}(t_1)\cdots e_{i_{\ell}}(t_{\ell})$ is equal to a factorization $e_{i_1'}(t_1')\cdots e_{i_{\ell}'}(t_{\ell}')$ where the corresponding words $(i_1,\ldots,i_{\ell})$ and $(i_1',\ldots,i_{\ell}')$ differ by a braid move and the parameters $t_1',\ldots,t_{\ell}'$ can be given by invertible, rational, subtraction-free expressions in $t_1,\ldots,t_{\ell}$. The case of lower unitriangular totally nonnegative matrices with Chevalley generators of the form $f_i$ is similar.

For the case of totally nonnegative square matrices, the commutation relations between the $h_i$'s, $f_i$'s, and $e_i$'s give enough information to move between equivalent factorizations. One can observe that any totally nonnegative square matrix admits an LDU factorization, meaning that it can be factored into elementary Jacobi matrices where the $f_i$'s are grouped together on the left, the $e_i$'s are on the right, and the $h_i$'s are in between. Once we understand the commutation relations between the elementary Jacobi matrices of different types, we may perform moves to take any factorization to a factorization which arises via an LDU decomposition so that the uniqueness of the factorization, up to these moves, follows from the unitriangular cases. These necessary relations are not shown in this paper, but can be found in Section~2.2 and Theorem~1.9 of \cite{fomin-zel-bruhat}.

Next, we discuss the topology of the semigroups. The Bruhat cells give a stratification of the semigroup of upper unitriangular totally nonnegative matrices. The corresponding poset of closure relations is isomorphic to the poset induced by the Bruhat order on $S_n$. Similarly, the poset of closure relations in the double Bruhat cells of the semigroup of totally nonnegative matrices is isomorphic to the poset induced by the Bruhat order on the Coxeter group $S_n\times S_n$. As a result, many of the nice properties of these Bruhat order posets transfer to the double Bruhat and bruhat decompositions of totally nonnegative and unitriangular totally nonnegative matrices, respectively.

We will use several classic properties of this poset to deduce analogous results for the decomposition of the semigroup of unitriangular $(n-2)$-nonnegative matrices in Section \ref{subsec-n-2}. First, underlying all of our theory is the idea that we can compare elements of the group in the Bruhat order using their corresponding reduced words; two words $u,w\in S_n$ satisfy $u\leq w$ if and only if a subword of any reduced expression for $w$ is a reduced expression for $u$ (this is known as the Subword Property, or Theorem 2.2.2 in \cite{bjorner-brenti}). Another important fact is the Exchange Property (Theorem 1.5.1 of \cite{bjorner-brenti}), which states that, for any $s=s_m$ with $m\in[1,n-1]$ and $w=s_1s_2\cdots s_k\in S_n$, $\ell(sw)\leq \ell(w)\implies sw=s_1\ldots \hat{s_i}\ldots s_k$ for some $i\in [k]$. 

The posets of closure relations on cells of totally nonnegative matrices (resp. unitriangular totally nonnegative matrices) has many special properties: it has a top and bottom element (Proposition 2.3.1 of \cite{bjorner-brenti}), it is ranked (Theorem 2.2.6 of \cite{bjorner-brenti}), and it is Eulerian. In addition, for the case of unitriangular totally nonnegative matrices, the corresponding poset is a regular CW-complex, i.e.\ the closure of each Bruhat cell is homeomorphic to a closed ball \cite{Hersh2014}. Our work will show how far these properties extend to cells of $k$-nonnegative matrices in our two special cases.

%=================================================
\subsection{Equivalent Conditions and Elementary Generalizations}
%=================================================
  
When discussing $k$-nonnegative matrices, it is useful to ask first whether we need to check all minors for nonnegativity (usually an intractable computation), or just some subset of minors. For example, a well-known result, from \cite{fomin-zel}, is that total nonnegativity can be determined by checking only column-solid minors. Brosowsky and Mason study necessary and sufficient conditions for \emph{k-positivity}, in their work in Section 4 of \cite{REUreport}.

The following statement, which follows from Fallat and Johnson \cite{TNNbook}, provides a sufficient condition for $k$-nonnegativity.

\begin{theorem} \label{thm:col-solid}
   An invertible matrix $X$ is $k$-nonnegative if all column-solid (or alternatively, row-solid) minors of $X$ of order $k$ or less are nonnegative.
\end{theorem}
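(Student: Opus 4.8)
The plan is to adapt the proof of the corresponding total-nonnegativity statement --- that an invertible matrix all of whose column-solid minors are nonnegative is totally nonnegative, which is the $k = n$ case and is available from \cite{TNNbook} --- while keeping track of the order of the minors involved. Concretely, I would induct on the order $m = 1, 2, \ldots, k$, proving that if every minor of order at most $m-1$ is nonnegative (the inductive hypothesis) and every column-solid minor of order at most $m$ is nonnegative (which holds since $m \le k$), then every minor $|X_{I,J}|$ with $|I| = |J| = m$ is nonnegative. The case $m = 1$ is immediate because every entry of $X$ is a column-solid minor. For the inductive step, if $J$ is an interval then $|X_{I,J}|$ is column-solid and there is nothing to prove; otherwise $J$ has a gap, and a secondary induction on the number of gaps in $J$ applies a Dodgson--Sylvester-type determinantal identity, involving an additional column $c$ lying in a gap of $J$ (and, if needed, an auxiliary row outside $I$), to rewrite $|X_{I,J}|$ in terms of minors of order at most $m$ whose column sets have fewer gaps, together with a \emph{pivot} minor of $X$. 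The invertibility of $X$ is used precisely to choose $c$ (and the auxiliary row) so that this pivot minor is nonzero, hence --- being already known to be nonnegative --- strictly positive, so that the identity delivers $|X_{I,J}| \ge 0$. One then checks that at stage $m$ only minors of order at most $m$ ever enter, so the argument respects the grading by order, which is what lets us stop at $m = k$.

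The row-solid version then follows by applying the column-solid version to the transpose $X^T$: its minors are exactly those of $X$ with the roles of rows and columns interchanged, and it lies in $GL_n(\mathbb{R})$ precisely when $X$ does.

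The main obstacle is the gap-filling step. One must set up the determinantal identity so that the column set genuinely loses a gap while the pivot minor stays nonzero, and then check that no negative contributions survive --- the bookkeeping of gaps, auxiliary rows, signs, and small base cases is the technical heart of the argument. The role of invertibility there is essential and cannot be removed: the singular matrix $\left(\begin{smallmatrix}0&0&1\\1&0&0\\0&0&0\end{smallmatrix}\right)$ has all of its column-solid minors equal to $0$, yet has a $2\times 2$ minor equal to $-1$ on rows $\{1,2\}$ and columns $\{1,3\}$, so any correct proof must feed invertibility into the production of a nonzero pivot.
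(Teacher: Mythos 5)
Your transpose reduction for the row-solid case is fine, but the core of your plan has a genuine gap, and it sits exactly at the step you call the technical heart. Your argument hinges on the claim that invertibility of $X$ lets you choose the gap column $c$ (and an auxiliary row) so that the pivot minor in a Dodgson--Sylvester-type identity is nonzero. That claim is not justified, and it is precisely where the Fekete-style induction for the \emph{positive} case fails to carry over to the nonnegative case. Invertibility of the full matrix only guarantees that for any $m$ chosen columns there exist \emph{some} $m$ rows giving a nonzero minor; it gives you no control over the specific structured minors that a condensation identity forces you to divide by, whose row and column sets are dictated by $I$, $J$, $c$ and the auxiliary row rather than freely chosen. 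Invertible matrices satisfying the hypotheses can have a great many vanishing minors (they are, after the fact, totally nonnegative, and may be as sparse as a triangular or diagonal matrix), so the admissible pivots can all vanish, the identity degenerates to an equation with no sign information about $\left|X_{I,J}\right|$, and the induction stalls; handling these degenerate configurations seems to require an extra argument (in practice, a perturbation) that your sketch does not supply. A further unaddressed point: plain Dodgson condensation removes \emph{extreme} rows and columns, while your gap column $c$ is interior to $J\cup\{c\}$, so you would need a more general Sylvester-type identity, and exhibiting one in which every non-target term has a controlled sign is exactly the content that is missing. Without (i) the explicit identity, (ii) the sign bookkeeping, and (iii) a proof of the nonzero-pivot claim or a separate treatment of the degenerate cases, this is a plan rather than a proof.

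By contrast, the paper's proof avoids all of this with a perturbation argument in which invertibility enters once and transparently: set $Q_n(q)=(q^{(i-j)^2})$, which is totally positive and tends to $I_n$ as $q\to 0^+$, and let $X_q=Q_n(q)X$. For a column-solid minor of order $r\le k$, Cauchy--Binet gives
\begin{equation*}
\left|(X_q)_{I,J}\right|=\sum_{|S|=r}\left|Q_n(q)_{I,S}\right|\left|X_{S,J}\right|,
\end{equation*}
and since $J$ is an interval, the minors $\left|X_{S,J}\right|$ are column-solid, hence nonnegative, and not all zero because the columns indexed by $J$ are linearly independent; so every such minor of $X_q$ is strictly positive. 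The $k$-positivity criterion (Corollary 3.1.6 of \cite{TNNbook}) then makes $X_q$ $k$-positive, and letting $q\to 0^+$ gives $k$-nonnegativity of $X$. If you want to salvage your approach, you would essentially need to prove the strictly positive (Fekete-type) version by your induction and then add this perturbation step anyway, which is the route the paper and \cite{TNNbook} take.
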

\begin{proof}
    Let $Q_n(q) = (q^{(i-j)^2})_{i,j=1}^n$ for $q \in (0,1)$.
    This matrix is totally positive and satisfies $\lim_{q \to 0^+} Q_n(q) = I_n$.
    Let $X_q = Q_n(q)X$, and apply the Cauchy-Binet formula on an order $r \leq k$ column-solid minor:
    \begin{align*}
        \left|(X_q)_{I,J}\right| &= \sum_{\substack{S \subset [n]\\ |S| = r}} \left|Q_n(q)_{I,S}\right|\left|X_{S,J}\right|
    \end{align*}
    The sum on the right hand side must be positive, since the column-solid minors of $X$ are nonnegative and $X$ is invertible.
    By Corollary 3.1.6 of \cite{TNNbook} (which gives an equivalent condition for $k$-positivity), $X_q$ must be $k$-positive. Taking the limit $q \to 0^+$, we see that $X$ is $k$-nonnegative.
    To get the analogous statement for row-solid minors, we can use $X_q = XQ_n(q)$.
\end{proof}

With this condition, we are sufficiently equipped to turn to factorizations of $k$-nonnegative matrices.
To begin, we would like to know the extent to which these matrices can be factored into elementary Jacobi matrices.
We can consider this as an algebraic problem of factoring divisors in the semigroup.
This leads us to the notion of \emph{$k$-irreducibility}.
\begin{definition}
    A $k$-nonnegative matrix $M$ is $k$-irreducible if $M = RS$ in the semigroup of invertible $k$-nonnegative matrices implies $R,S \notin \{f_i(a), e_i(a) \mid a > 0\}$.
\end{definition}
This definition will lead the reader to expect the following theorem.
\begin{theorem} \label{thm:factorization}
    Every $k$-nonnegative matrix $X$ can be factored into a product of finitely many Chevalley generators and a $k$-irreducible matrix.
\end{theorem}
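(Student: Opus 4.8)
The substantive content here is a \emph{finiteness/termination} statement: by definition $M$ is $k$-irreducible precisely when no Chevalley generator can be split off on either side, so if $X$ is not $k$-irreducible I can write $X = C X'$ or $X = X' C$ with $C$ a Chevalley generator and $X'$ again an invertible $k$-nonnegative matrix ($X'$ lies in the semigroup by the definition of non-irreducibility, and $\det X' = \det X$ since Chevalley generators are unitriangular). Iterating until the process stops gives $X = C_1 \cdots C_p\, M\, D_1 \cdots D_q$ with $M$ not splittable on either side, i.e.\ $M$ is $k$-irreducible. So the only thing to prove is that this iteration must stop, and for that I will exhibit a quantity that strictly decreases at every step.

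The plan is to always split off a generator with the \emph{maximal admissible parameter}. Consider splitting $e_i(a)$ off on the left: $X' = e_i(-a)X$ differs from $X$ only in row $i$, which becomes $\mathrm{row}_i(X) - a\cdot\mathrm{row}_{i+1}(X)$. Expanding along row $i$, each minor of $X'$ of order at most $k$ either equals the corresponding minor of $X$ (when $i\notin I$, or when $\{i,i+1\}\subseteq I$) or equals $|X_{I,J}| - a\,|X_{I',J}|$ with $I' = (I\setminus\{i\})\cup\{i+1\}$ (when $i\in I$, $i+1\notin I$). Since $X$ is $k$-nonnegative and $|I'|\le k$, we have $|X_{I',J}|\ge 0$, so the set of $a\ge 0$ for which $X'$ is $k$-nonnegative is an interval $[0,a_{\max}]$, where
\[
a_{\max} = \min\Bigl\{\,\tfrac{|X_{I,J}|}{|X_{I',J}|}\;:\; i\in I,\ i+1\notin I,\ |I|\le k,\ |X_{I',J}|>0\,\Bigr\}.
\]
If $e_i$ can be split off at all then $a_{\max}>0$; and $a_{\max}<\infty$, since otherwise every such $|X_{I',J}|$ vanishes, in particular $X_{i+1,j}=0$ for all $j$, contradicting invertibility. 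The cases of $f_i$ on the left and of $e_i,f_i$ on the right are the same after transposing or passing to the analogous column operation. I would take the split-off parameter to be exactly $a_{\max}$.

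For the decreasing quantity I would use $N(X)$, the number of strictly positive minors of $X$ of order at most $k$ — a nonnegative integer bounded by $\sum_{r=1}^{k}\binom{n}{r}^2$. The claim is that a maximal split strictly decreases $N$. With the notation above: minors that are unchanged keep their value; for a minor of the form $|X_{I,J}| - a_{\max}|X_{I',J}|$, if $|X_{I,J}|=0$ then, since $a_{\max}$ is a minimum of \emph{positive} ratios, necessarily $|X_{I',J}|=0$ as well, so this minor remains $0$, while if $|X_{I,J}|>0$ it becomes $\ge 0$; hence no vanishing minor of $X$ turns into a positive minor of $X'$. On the other hand, the pair $(I_0,J_0)$ attaining the minimum has $|X_{I_0,J_0}| = a_{\max}|X_{I_0',J_0}| > 0$ but $|X'_{I_0,J_0}| = 0$, so at least one positive minor is destroyed. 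Therefore $N(X') < N(X)$, and the iteration terminates after at most $N(X)$ steps at a $k$-irreducible $M$.

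The two steps I expect to be delicate are exactly these: first, recognizing that the split has to be taken \emph{maximally} — splitting off a generator with a small parameter need not change $N$ at all, so there is no automatic termination — and second, the sign bookkeeping that makes $N$ genuinely monotone, namely the argument that the destruction of a positive minor can never be compensated by the resurrection of a previously vanishing one. Once these are in place, everything else (the reduction to termination, the description of $a_{\max}$, transposing to handle the remaining three cases) is routine.
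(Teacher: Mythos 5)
Your proposal is correct and follows essentially the same route as the paper: greedily split off a Chevalley generator with the maximal admissible parameter, identified as a minimum of ratios of minors (finite by invertibility), and terminate because the number of positive (equivalently, vanishing) minors of order at most $k$ changes monotonically and is finite. In fact you make explicit the bookkeeping the paper leaves implicit—that a maximal split kills a positive minor while never turning a vanishing minor positive—whereas the paper phrases the same termination argument via row-solid minors and the finiteness of the set of minors.
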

\begin{proof}
    Suppose $X$ is not $k$-irreducible.
    Then we can suppose without loss of generality that $e_i(a)^{-1}X$ is $k$-nonnegative for some $i\in[n]$ and $a\in\mathbb{R}_{>0}$ (corresponding to removing $a$ copies of row $i+1$ from row $i$).
    We claim it is possible to choose $b>0$ so that $e_i(b+\delta)^{-1}X$ is not $k$-nonnegative for any $\delta>0$.

    We want to determine when $e_i(x)^{-1}X$ is $k$-nonnegative in terms of $x$.
    It suffices to consider row-solid order $\leq k$ minors containing row $i$ and not row $i+1$, which we will index by $\gamma$.
    These determinants $d_{\gamma}$ are multilinear functions in the rows so that $d_{\gamma}(x)=|A|-x |B|$ for some submatrices $A,B$ of $X$.
    Thus, $d_\gamma^{-1}([0,\infty))$ is closed for any $\gamma$.  There must be some $d_\gamma$ such that $d_{\gamma}^{-1}([0,\infty))$ is bounded, as otherwise we would break invertibility of $X$ since the minors $|B|$ induced by the $d_{\gamma}$'s cannot all be $0$. Thus, the intersection $\cap_\gamma d_\gamma^{-1}([0,\infty))$ is closed and compact.
    We know this set is nonempty because $a$ is in it.
    Thus, there is a maximal $b$ in the intersection and applying an inverse Chevalley with any greater value will make the product matrix not $k$-nonnegative.
    It is also clear that this maximal $b$ is of the form $|A|/|B|$ where there is some $d_\gamma=|A|-x|B|$.  In fact, we must have $b=\min_\gamma\{|A|/|B|\ |\ d_\gamma=|A|-x|B|, |B|\neq 0\}$.
    Thus, $e_i(b+\delta)^{-1}X$ is not $k$-nonnegative for any $\delta>0$.

    So, in this way, we factor out a Chevalley generator, leaving a matrix with one more zero minor of order at most $k$.
    We can iterate this process, which must stop eventually because the number of minors of size at most $k$ is finite.
    The resulting matrix must be $k$-irreducible.
\end{proof}
Thus, $k$-irreducible matrices and Chevalley matrices will form a generating set for the semigroup of $k$-nonnegative matrices.\footnote{This does not give a minimal generating set, as there are $k$-irreducible matrices that can be factored as $Xe_iY$ where $X,Y$ are $k$-irreducible: see the case of tridiagonal matrices in~\cite{REUreport}.}

The problem of describing $k$-irreducible matrices seems to be difficult in general.
We investigate the shape of these matrices in general in our report \cite{REUreport}, where we consider locations of zero minors and extend Section~7.2 of \cite{TNNbook}.
In this work, we will focus on the results that allow us to give a complete characterization of $k$-irreducible matrices in our two special cases.

To this end, we first consider where zero entries can occur in $k$-nonnegative matrices.
\begin{lemma} \label{lem:zero-shadow}
    Let $X$ be a $k$-nonnegative invertible matrix, for $k \geq 2$.
    If $x_{ij} = 0$, then:
    \begin{enumerate}
        \item $i \neq j$,
        \item if $i < j$, then $x_{i'j'} = 0$ for $i' \leq i$, $j' \geq j$,
        \item if $i > j$, then $x_{i'j'} = 0$ for $i' \geq i$, $j' \leq j$.
    \end{enumerate}
\end{lemma}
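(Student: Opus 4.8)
The plan is to reduce everything to a statement about $2\times 2$ and $k\times k$ solid (or nearly-solid) minors, exploiting that these must be nonnegative while $X$ is invertible. For part (1), suppose $x_{ii}=0$ for some $i$. Consider the contiguous $2\times 2$ minors $|X_{\{i-1,i\},\{i-1,i\}}|$, $|X_{\{i,i+1\},\{i,i+1\}}|$ (whichever exist); these are $-x_{i-1,i}x_{i,i-1}$ and $-x_{i,i+1}x_{i+1,i}$, which forces the off-diagonal entries adjacent to the zero diagonal entry to vanish in pairs. Iterating this along the diagonal, one shows an entire row or column is forced to zero, contradicting invertibility. (I would phrase this carefully: the cleanest route is to observe that if $x_{ii}=0$ then by the $2\times2$ minor condition either the whole of row $i$ to the left equals zero or the whole of column $i$ above equals zero, etc., and propagate.)

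For parts (2) and (3), which are symmetric under transposition, I would focus on (2): assume $x_{ij}=0$ with $i<j$, and show $x_{i'j'}=0$ whenever $i'\le i$ and $j'\ge j$. The key step is a "two-step" propagation using $2\times2$ minors, then induction on the rectangle. Concretely, look at the solid $2\times2$ minor $|X_{\{i,i+1\},\{j-1,j\}}| = x_{ij}x_{i+1,j-1} - x_{i,j-1}x_{i+1,j} = -x_{i,j-1}x_{i+1,j}$ (using $x_{ij}=0$); nonnegativity gives $x_{i,j-1}x_{i+1,j}\le 0$. Similarly $|X_{\{i-1,i\},\{j,j+1\}}| = -x_{i-1,j}x_{i,j+1}\le 0$. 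These sign constraints alone don't give zeros, so I expect the real input must come from the already-established fact that the "shadow region" to the northeast of a zero off-diagonal entry is constrained — this is where I would want to set up a minimal-counterexample / induction: take $(i',j')$ in the northeast quadrant of $(i,j)$ with $x_{i'j'}\neq 0$ and $i'+( - j')$ extremal, and derive a contradiction by exhibiting a negative minor built from the zeros guaranteed by minimality together with $x_{i'j'}\neq 0$. The cleanest version: once enough entries in a staircase-shaped region are known to be zero, a suitable $r\times r$ solid minor (for some $r\le k$) expands along zero rows/columns down to $\pm x_{i'j'}\cdot(\text{something})$, and the sign is wrong.

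The main obstacle I anticipate is organizing the induction so that the minor I use to force $x_{i'j'}=0$ always has order at most $k$ — I only get to use minors of size $\le k$, so I cannot casually take a large solid minor spanning the whole northeast block. The trick will be to do the propagation in small steps: first prove the "one-step" statements ($x_{i,j}=0 \Rightarrow x_{i-1,j}=0$ or $x_{i,j+1}=0$, and refine using invertibility to kill the disjunction), using only $2\times2$ and $3\times3$ minors, and then assemble the full rectangular shadow by transitivity/induction on $i-i'$ and $j'-j$. I would also need to handle the boundary cases (when $i=1$ or $j=n$, fewer adjacent minors are available) separately, and to invoke invertibility at exactly the points where a pure sign argument stalls, since nonnegativity of minors by itself permits an all-zero shadow — it is invertibility that rules out the degenerate alternative in which, say, an entire row is zero.
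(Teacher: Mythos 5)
Your high-level plan (only $2\times 2$ minors plus invertibility, one-step propagation, then transpose/anti-transpose symmetry and induction) is the right one and is what the paper intends, but the concrete steps you wrote down do not work, and the actual mechanism is missing. The minors you compute are both miscomputed and the uninformative ones: $\left|X_{\{i,i+1\},\{j-1,j\}}\right| = x_{i,j-1}x_{i+1,j} - x_{i,j}x_{i+1,j-1} = x_{i,j-1}x_{i+1,j}$, which is automatically nonnegative and yields nothing (similarly for $\left|X_{\{i-1,i\},\{j,j+1\}}\right|$); in these submatrices the known zero sits in an \emph{anti-diagonal} position. The useful $2\times 2$ minors are those in which $x_{ij}$ occupies a \emph{diagonal} position: rows $\{i,r\}$, columns $\{j,t\}$ with $r>i$, $t>j$ give $x_{it}x_{rj}=0$, and rows $\{s,i\}$, columns $\{c,j\}$ with $s<i$, $c<j$ give $x_{sj}x_{ic}=0$. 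Moreover, you never specify how invertibility turns these dichotomies into zeros. The one-step claim (if $x_{ij}=0$, $i<j$, then $x_{i,j+1}=0$) goes: assuming $x_{i,j+1}>0$, the first family with $t=j+1$ forces $x_{rj}=0$ for all $r\ge i$; invertibility of $X$ then supplies some $s_0<i$ with $x_{s_0j}>0$; the second family with $s=s_0$ forces $x_{ic}=0$ for all $c\le j$; the first family again (now using $x_{ic}=0$ and $x_{i,j+1}>0$) forces $x_{rc}=0$ for all $r\ge i$, $c\le j$, so the first $j$ columns are supported on rows $1,\dots,i-1$ and have rank at most $i-1<j$, contradicting invertibility. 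In particular no $3\times 3$ or larger minors are ever needed, so your concern about keeping minor orders below $k$ is moot, and your proposed disjunction ``$x_{i-1,j}=0$ or $x_{i,j+1}=0$'' is not the dichotomy that actually arises.

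Your sketch of part (1) has a genuine gap as well. The adjacent principal minors only give that in each pair $(x_{i-1,i},x_{i,i-1})$, $(x_{i,i+1},x_{i+1,i})$ at least one factor vanishes, and the full family of minors through $(i,i)$ gives ``row $i$ vanishes to the left or column $i$ vanishes above'' and ``row $i$ vanishes to the right or column $i$ vanishes below.'' Iterating along the diagonal does \emph{not} produce a zero row or column in the mixed case (say, row $i$ zero on columns $\ge i$ while column $i$ is zero on rows $\le i$), which is perfectly consistent with all $2\times 2$ minors. To kill it you need parts (2)--(3) already proved: apply them to the above-diagonal zeros $x_{si}=0$, $s<i$, to annihilate the block of rows $1,\dots,i$ against columns $i,\dots,n$, and then use the same rank count. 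So the proof should establish (2) and (3) first and deduce (1) afterwards, rather than the order you propose.
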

This immediately follows from considering the $2\times 2$ submatrices containing $x_{ij}$ and the invertibility of $X$.

Now, we describe the extent to which we can factor Chevalley matrices from a generic $k$-nonnegative matrix. First, it will be useful to explicitly state the following technical lemma fully describing the minors of the product of a matrix with a Chevalley generator. It immediately follows from Cauchy-Binet.
\begin{lemma}\label{lem:tech}
    Suppose $X'$ is a matrix $X$, transformed through left or right multiplication by a Chevalley generator.
    Then minors of $X'$ are equal to minors of $X$ except in the following cases:
    \begin{enumerate}
        \item If $X' = X e_k(a)$ (adding $a$ copies of column $k$ to column $k+1$) then $\left|X'_{I,J}\right| = \left|X_{I,J}\right| + a\left|X_{I,J\setminus k+1 \cup k}\right|$ when $J$ contains $k+1$ but not $k$;
        \item If $X' = e_k(a) X$ (adding $a$ copies of row $k+1$ to row $k$) then $\left|X'_{I,J}\right| = \left|X_{I,J}\right| + a\left|X_{I\setminus k \cup k+1, J}\right|$ when $I$ contains $k$ but not $k+1$.
    \end{enumerate}
    For $f_k$, swap $k+1$ and $k$ in the above statements.
\end{lemma}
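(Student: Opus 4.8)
The plan is to apply the Cauchy--Binet formula and then pin down exactly which minors of the Chevalley generator are nonzero. Recall $e_k(a) = I_n + aE_{k,k+1}$, where $E_{k,k+1}$ is the matrix unit, and $f_k(a) = e_k(a)^T = I_n + aE_{k+1,k}$. I will treat $X' = Xe_k(a)$ in detail; the remaining three cases follow either by applying Cauchy--Binet with the factors in the other order or by transposing. For $|I| = |J| = r$, Cauchy--Binet gives
\[
|X'_{I,J}| = \sum_{\substack{S \subseteq [n] \\ |S| = r}} |X_{I,S}|\,|(e_k(a))_{S,J}|,
\]
so everything reduces to computing the minor $|(e_k(a))_{S,J}|$.

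The claim I would establish is: $|(e_k(a))_{S,J}|$ equals $1$ if $S = J$; equals $a$ if $k \in S$, $k+1 \notin S$, and $J = (S\setminus\{k\})\cup\{k+1\}$ (equivalently $k+1 \in J$, $k \notin J$, $S = (J\setminus\{k+1\})\cup\{k\}$); and equals $0$ otherwise. To see this, expand $|(e_k(a))_{S,J}|$ by the Leibniz formula: a nonzero term corresponds to a bijection $\sigma\colon S \to J$ with $(e_k(a))_{s,\sigma(s)} \neq 0$ for every $s \in S$, i.e.\ with $\sigma(s) = s$ unless $s = k$ and $\sigma(s) = k+1$. Such a $\sigma$ is forced to fix $S \setminus \{k\}$, so either $\sigma = \mathrm{id}$ (forcing $S = J$ and contributing the entry product $1$) or $\sigma$ sends $k \mapsto k+1$ and fixes everything else (forcing the displayed relation between $S$ and $J$, and contributing $a$ times a sign). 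Because $k$ and $k+1$ are consecutive and $S$, $J$ agree off $\{k,k+1\}$, the element $k$ in $S$ and the element $k+1$ in $J$ occupy the same rank in the respective sorted orders, so the permutation induced on positions is the identity and the sign is $+1$.

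Plugging this back into the Cauchy--Binet sum: if $k+1 \in J$ and $k \notin J$, exactly one index set $S \neq J$ contributes, namely $S = (J\setminus\{k+1\})\cup\{k\}$, giving $|X'_{I,J}| = |X_{I,J}| + a\,|X_{I,\,J\setminus\{k+1\}\cup\{k\}}|$; in every other case no such $S$ exists and $|X'_{I,J}| = |X_{I,J}|$. The case $X' = e_k(a)X$ is the identical computation applied to $\sum_S |(e_k(a))_{I,S}|\,|X_{S,J}|$, with $(I,S)$ playing the role of $(S,J)$, and the $f_k$ statements come from replacing $e_k(a)$ by its transpose, which interchanges the roles of $k$ and $k+1$ throughout. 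There is no real obstacle here; the only thing to be careful about is the bookkeeping of whether rows or columns get modified and whether the modification exchanges $k$ for $k+1$ or $k+1$ for $k$, together with the (trivial, as noted) sign of the single surviving Leibniz term.
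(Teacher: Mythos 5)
Your proof is correct and follows the same route the paper indicates: the paper simply remarks that the lemma "immediately follows from Cauchy--Binet," and your argument is that same Cauchy--Binet computation carried out explicitly, including the verification that the only nonidentity minor of $e_k(a)$ contributing is the one of value $a$ with sign $+1$.
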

Now, we can show that $k$-irreducible matrices have staircases of zeroes in their northeast and southwest corners.
\begin{theorem}\label{thm:decomp}
    If an invertible matrix $A$ is $k$-nonnegative, we can express it as a product of Chevalley matrices (specifically, only $e_i$s) and a single $k$-nonnegative matrix where the $ij$-th entry is zero when $|j-i| > n-k$ .

    That is, if a matrix is $k$-irreducible, the $ij$-th entry is zero when $|j-i| > n-k$.  Note that if we set $k = n$, we get the Loewner-Whitney theorem.
\end{theorem}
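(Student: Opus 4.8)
The plan is to induct on the number of nonzero entries of $A$ lying outside the band $|j-i|\le n-k$, peeling off an $e_i$ generator at each step. Concretely, suppose $A$ is $k$-nonnegative and has some entry $a_{ij}\ne 0$ with $j-i>n-k$ (the case $i-j>n-k$ will be symmetric, handled by transposing, or by using $f_i$'s — but actually by Lemma~\ref{lem:zero-shadow}, once we kill the strictly-upper excess the strictly-lower situation is governed by the same argument applied to $A^T$, so I would first reduce to the strictly-upper-triangular excess). Among all such ``bad'' entries, pick one that is, say, closest to the diagonal in its anti-diagonal, or more usefully, pick the bad entry $a_{ij}$ with $j$ as small as possible and then $i$ as large as possible; by Lemma~\ref{lem:zero-shadow}(2), the whole northeast rectangle above-and-right of any zero is zero, so the nonzero bad entries form a ``staircase'' region and $a_{i,j}$ with this extremal choice sits at an inner corner, meaning $a_{i,j-1}=0$ or $j-1\le i+n-k$, and $a_{i+1,j}=0$ (or $i+1$ is out of range). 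The key point is that I want to factor $A = e_{j-1}(c)\,A'$ — wait, that adds a multiple of row $j$ to row $j-1$; to *remove* mass from entry $(i,j)$ using column operations I should instead write $A = A'\,e_{i}(c)^{?}$... let me restate: I will factor from the left, $A = e_{i}(c) A'$ where $A' = e_i(c)^{-1}A$ subtracts $c$ times row $i+1$ from row $i$. Since (by the extremal choice) row $i+1$ has its leading nonzero entry no further right than column $j$ (indeed $a_{i+1,j'}=0$ for $j'\ge j$ unless... hmm), this is not quite the right generator either.

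Let me instead follow the structure suggested by Theorem~\ref{thm:factorization} and its proof: the right statement is that whenever $A$ has a nonzero entry strictly outside the band, *some* inverse Chevalley generator applied to $A$ (or to $A$ on the right) keeps it $k$-nonnegative and strictly increases the number of zero entries. I would argue as follows. Let $(i,j)$ with $j-i = n-k+1$ minimal among bad anti-diagonals and, within it, with $i$ maximal. Consider $A' = A\, e_{j-1}(a)^{-1}$ for suitable $a>0$, which subtracts $a$ times column $j-1$ from column $j$ — no: $e_{j-1}(a)$ adds column $j-1$ to column $j$, so $e_{j-1}(-a)$ subtracts it. Choose $a = a_{ij}/a_{i,j-1}$ if $a_{i,j-1}\ne 0$; but by the extremality and Lemma~\ref{lem:zero-shadow}, $a_{i,j-1}$ may itself be $0$. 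The resolution: use the largest index $p<j$ with $a_{ip}\ne 0$; by the shadow lemma and the band hypothesis on entries closer to the diagonal, $a_{ip}$ lies *inside* the band (since we chose $(i,j)$ extremally, entries $(i,j-1),\dots$ just inside are already zero or in-band), so $p \le i+n-k$, i.e., $p$ is a ``good'' column. Now I want a sequence of column operations $e_{j-1},e_{j-2},\dots,e_p$ that, composed, subtract a multiple of column $p$ from column $j$ — this is exactly the standard fact that $e_{j-1}(c_{j-1})\cdots e_p(c_p)$ realizes adding multiples of earlier columns to column $j$, and by choosing the $c$'s appropriately (triangular system) I can arrange the net effect to zero out entry $(i,j)$ while only modifying columns $p+1,\dots,j$.

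The crux — and the step I expect to be the main obstacle — is verifying that this column operation preserves $k$-nonnegativity, i.e., that we may factor $A = (\text{Chevalley product}) \cdot A''$ with $A''$ still $k$-nonnegative, rather than having to allow negative parameters. Here I would invoke the mechanism of Theorem~\ref{thm:factorization}: by that theorem $A$ factors as (Chevalley generators)$\cdot(k$-irreducible $M)$, so it suffices to show a $k$-irreducible matrix has the claimed banded zero pattern. For $M$ $k$-irreducible, suppose for contradiction $m_{ij}\ne 0$ with $j-i>n-k$. Using Lemma~\ref{lem:tech}, I compute the effect of $M\, e_{p}(-c)$ (for the good column $p$ found above, with $c = m_{ij}/m_{ip}$) on every order-$\le k$ minor: a minor $|M_{I,J}|$ changes only if $J\ni j,\, J\not\ni p$, in which case it becomes $|M_{I,J}| - c\,|M_{I,J\setminus j\cup p}|$. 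Because $j-i>n-k$ and $|J|\le k$, any such $J$ with $j\in J$ forces the rows $I$ to avoid enough of the top rows that the entry $m_{ij}$ sits in a corner of the submatrix; more precisely, by the shadow lemma the submatrix $M_{I,J}$ is block-triangular with the column-$j$ part supported in rows $\ge i$, and one shows $|M_{I,J}|$ and $c\,|M_{I,J\setminus j\cup p}|$ have a controlled relationship (the latter minor is a genuine minor of $M$ hence nonnegative, and the subtraction is exactly calibrated to not overshoot, for small enough $c$ — indeed for $c$ up to $\min$ of the relevant ratios, matching the ``maximal $b$'' argument in the proof of Theorem~\ref{thm:factorization}). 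Thus for a suitable $c>0$, $M e_p(-c)$ is $k$-nonnegative, so $M = (M e_p(-c)) e_p(c)$ exhibits $M$ as divisible by the Chevalley generator $e_p(c)$, contradicting $k$-irreducibility. Iterating over all bad entries (from the outermost anti-diagonals inward, so that each step's column operations don't re-create previously-cleared bad entries — this bookkeeping, and checking that a single $e_p$ rather than a product suffices, is the routine-but-fiddly part) yields the theorem; setting $k=n$ makes the band $|j-i|>0$ empty of constraints on off-diagonal... rather, $n-k=0$ forces all off-diagonal entries of the irreducible part to vanish except we also get the lower part, leaving a diagonal matrix, which together with the Chevalley $e_i$'s and $f_i$'s recovers Loewner--Whitney.
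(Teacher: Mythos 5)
There is a genuine gap at the crux of your argument: you never actually prove that the elimination step preserves $k$-nonnegativity. Your plan is to show that a $k$-irreducible matrix $M$ with a bad entry $m_{ij}\neq 0$, $j-i>n-k$, is divisible by some Chevalley generator, and you justify this by saying the affected minors $|M_{I,J}| - c\,|M_{I,J'}|$ are ``calibrated not to overshoot for small enough $c$.'' That claim is unsupported and in general false as stated: if some affected minor is already $0$ while its correction minor is positive, then \emph{every} $c>0$ immediately produces a negative minor, and nothing in your sketch rules out this configuration for a $k$-irreducible matrix with a bad entry --- ruling it out is exactly the content of the theorem, so the argument is circular at the decisive point. (Your appeal to the ``maximal $b$'' mechanism of Theorem~\ref{thm:factorization} does not help: that argument shows how far one can push a Chevalley factor \emph{if} one can be factored at all; it does not show that a bad entry forces such a factor to exist.) There are also mechanical problems: right multiplication by $e_p(-c)$ adds $-c$ times column $p$ to column $p+1$, not to column $j$, so your choice $c=m_{ij}/m_{ip}$ does not annihilate the bad entry; and a product $e_{j-1}(c_{j-1})\cdots e_p(c_p)$ does not realize ``add a multiple of column $p$ to column $j$'' --- it perturbs all columns $p+1,\dots,j$.

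What makes the paper's proof work, and what your proposal is missing, is a specific exact identity tied to an \emph{adjacent} pivot. The paper pivots on the entry directly below the bad entry, $a_{i+1,j}$, which is automatically nonzero when $a_{ij}\neq 0$ by Lemma~\ref{lem:zero-shadow} (you worried instead about $a_{i,j-1}$ vanishing and were led to a distant pivot column, which destroys the mechanism). After arranging, by an explicit order of elimination (rightmost column first, top-down within each column), that the whole rectangle with rows $\le i+1$ and columns $\ge j$ is zero except $(i,j)$ and $(i+1,j)$, the only minors affected by $e_i(-a_{ij}/a_{i+1,j})$ are row-solid ones with $I=[h,i]$ and $J$ entirely to the left of $j$, and for these Lemma~\ref{lem:tech} plus cofactor expansion gives
\begin{equation*}
|A_{I,J}| - \tfrac{a_{ij}}{a_{i+1,j}}\,|A_{I\setminus i\cup i+1,\,J}| \;=\; \tfrac{1}{a_{i+1,j}}\,\bigl|A_{I\cup\{i+1\},\,J\cup\{j\}}\bigr|,
\end{equation*}
a single minor of $A$ of order one larger, which is still of order at most $k$ because $i<k$, hence nonnegative by hypothesis (via Theorem~\ref{thm:col-solid}). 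So nonnegativity is preserved \emph{exactly}, with no limiting or calibration argument, and the bad entry is zeroed; iterating in the stated order, and transposing for the lower corner, finishes the proof. To repair your proposal you would need to replace the ``small $c$'' heuristic with this identity (or an equivalent determinantal argument), which essentially forces the adjacent-pivot, fixed-order elimination of the paper's Lemma~\ref{lem:entry-zeros}.
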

\begin{proof}
    We use the following lemma:
    \begin{lemma} \label{lem:entry-zeros}
        Let $A$ be an invertible $k$-nonnegative matrix. Let $i$ and $j$ be indices such that (1) $i < j$, (2) $a_{x,y} = 0$ for $x \leq i+1$ and $y \geq j$ except for $a_{i+1,j}$ and $a_{i,j}$, which have no restrictions on them, and (3) $i<k$. Then either $a_{i,j} = 0$ or $e_i(-a_{i,j}/a_{i+1,j})A$ is $k$-nonnegative.

        (The analogous lemma holds in the transpose case, using $f_i$s, and also when factoring a Chevalley generator from the right of $A$.)
    \end{lemma}
    In the latter case, we can reduce our matrix to one where $a_{ij}$ is zero by factoring out a Chevalley matrix.
    \begin{proof}
        First, notice that our row operation is well-defined, since $a_{i+1,j} = 0 \implies a_{i,j} = 0$ from Lemma~\ref{lem:zero-shadow} and (1).
        Further, notice that from Theorem~\ref{thm:col-solid} and Lemma~\ref{lem:tech} the only minors we need to worry about are those row-solid minors containing row $i$ but not row $i+1$.
        From (2), it suffices to consider any $I,J$ where $I = [h,i]$ for some $h<i$ and $J$ has no indices greater than or equal to $j$ to check for nonnegativity.
        Let $I,J$ define such a minor. Then using Lemma~\ref{lem:tech},
        \begin{align*}
            \left|e_i(-a_{i,j}/a_{i+1,j})A_{I,J}\right| &= \left|A_{I,J}\right| - \frac{a_{i,j}}{a_{i+1,j}}\left|A_{I\setminus i \cup i+1,J}\right| \\
            &= \frac{1}{a_{i+1,j}}\left(a_{i+1,j}\left|A_{I,J}\right| - a_{i,j}\left|A_{I\setminus i \cup i+1,J}\right|\right) \\
            &= \frac{1}{a_{i+1,j}} \left|A_{I\cup i+1,J \cup j}\right|
        \end{align*}
        And because the minor $|A_{U\cup i+1,J\cup j}|$ is of order one greater than the order of the original minor $|A_{I,J}|$, we have that the resulting matrix must be $k$-nonnegative since $|A_{I,J}|$ is of order less than $k$ because of (3).
    \end{proof}
    We can iterate this factorization, using the criteria to find another entry to eliminate.
    The top-right corner satisfies the criterion for the lemma, and for a matrix where that entry is zero, the entry directly below satisfies the criterion, and so on.
    We can eliminate $k-1$ entries in the last column, one by one top-down, then $k-2$ entries in the second-to-last, and continue until all entries desired are zero.
    For the second case $i>j$, one can consider the transpose of the matrix and use the above argument to get the zeros in the bottom-left corner of the original matrix.
\end{proof}

A final remark on zero entries in $k$-irreducible matrices results from a simple application of Lemma \ref{lem:entry-zeros}.
\begin{corollary} \label{cor:more-decomp}
    Let $M$ be a $k$-nonnegative $k$-irreducible matrix for $k > 2$. Then
    \begin{align*}
        m_{i,j} = 0 \implies \begin{cases}
        m_{i-1,j-1} = 0 & \text{if } i \leq k \text{ or } j \leq k \\
        m_{i+1,j+1} = 0 & \text{if } i > n-k \text{ or } j > n-k
    \end{cases}
    \end{align*}
\end{corollary}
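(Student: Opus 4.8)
The plan is to establish the first implication of Corollary~\ref{cor:more-decomp} for a zero entry strictly above the diagonal and then obtain the rest by symmetry. First I would record the reductions. Transposition sends $k$-nonnegative $k$-irreducible matrices to $k$-nonnegative $k$-irreducible matrices — it preserves every minor, and $e_i(a)^T=f_i(a)$, so a Chevalley factor of $M$ corresponds to one of $M^T$ — and it interchanges zero entries above and below the diagonal, so the first implication in general follows from the case $i<j$. Conjugation by the antidiagonal permutation $w_0$ likewise preserves $k$-nonnegativity, invertibility, and $k$-irreducibility (it carries each order-$r$ minor to an order-$r$ minor, and $w_0e_i(a)w_0=f_{n-i}(a)$), and it converts the first implication into the second. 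So it suffices to prove: if $M$ is $k$-nonnegative and $k$-irreducible, $m_{i,j}=0$ with $i<j$ and $i\le k$, then $m_{i-1,j-1}=0$. (For $i<j$ the hypothesis ``$i\le k$ or $j\le k$'' reduces to $i\le k$.) I will also use freely that every entry of $M$ is nonnegative, being a $1\times 1$ minor.

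The heart of the argument is an induction on the row index $i$ (equivalently, a minimal-counterexample argument). Fix $M$, $i\ge 2$, and $j>i$ with $i\le k$ and $m_{i,j}=0$, and assume the claim for all smaller row indices. Lemma~\ref{lem:zero-shadow}(2) already gives $m_{x,y}=0$ whenever $x\le i$ and $y\ge j$; the only extra input needed to invoke Lemma~\ref{lem:entry-zeros} at position $(i-1,j-1)$ is that $m_{x,j-1}=0$ for every $x\le i-2$. This is vacuous when $i=2$. When $i\ge 3$, the block of zeros above gives $m_{i-1,j}=0$, and the triple $(i-1,j)$ meets the inductive hypothesis ($i-1\ge 2$, $j>i-1$, $i-1\le k$), so $m_{i-2,j-1}=0$; applying Lemma~\ref{lem:zero-shadow}(2) to this zero (legitimate since $i-2<j-1$) spreads it to $m_{x,j-1}=0$ for all $x\le i-2$, as required.

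Now I would apply Lemma~\ref{lem:entry-zeros} with its row/column indices taken to be $i-1$ and $j-1$. Its hypothesis (1) is $i-1<j-1$, its hypothesis (3) is $i-1<k$ (both immediate), and its hypothesis (2) — that the block $\{(x,y):x\le i,\ y\ge j-1\}$ vanishes except possibly at $(i,j-1)$ and $(i-1,j-1)$ — is exactly what the previous paragraph provides. The conclusion is that either $m_{i-1,j-1}=0$, which is what we want, or $e_{i-1}\!\bigl(-m_{i-1,j-1}/m_{i,j-1}\bigr)M$ is $k$-nonnegative. In the second case, $m_{i-1,j-1}\ne 0$ forces $m_{i,j-1}\ne 0$ (by Lemma~\ref{lem:zero-shadow}, exactly as in the proof of Lemma~\ref{lem:entry-zeros}), and since both entries are nonnegative the scalar $c:=m_{i-1,j-1}/m_{i,j-1}$ is strictly positive; hence $M=e_{i-1}(c)\cdot\bigl(e_{i-1}(-c)M\bigr)$ exhibits $M$ as a product in the semigroup of invertible $k$-nonnegative matrices with a genuine Chevalley-generator factor, contradicting $k$-irreducibility. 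So $m_{i-1,j-1}=0$ in all cases, closing the induction.

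I expect the main obstacle to be the bookkeeping needed to verify hypothesis (2) of Lemma~\ref{lem:entry-zeros}: the lemma requires a full rectangular block of zeros (minus two exceptional entries), whereas Lemma~\ref{lem:zero-shadow} only supplies the part of that block with column index $\ge j$. Harvesting the missing column $j-1$ for free is precisely why the proof has to run as an induction on the row index rather than as a single application of the lemma — everything else is routine. A secondary point requiring care is the sign of the Chevalley parameter, so that the factorization produced in the ``bad'' case is really forbidden by $k$-irreducibility; here the nonnegativity of the entries of $M$ does the work.
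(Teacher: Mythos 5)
Your argument is correct and is essentially the paper's intended proof: the paper offers no details beyond calling the corollary ``a simple application of Lemma~\ref{lem:entry-zeros}'', and that is exactly what you carry out, using Lemma~\ref{lem:zero-shadow} plus an induction on the row index to supply hypothesis (2) of Lemma~\ref{lem:entry-zeros}, and $k$-irreducibility (with the positive Chevalley parameter) to rule out the alternative conclusion. The symmetry reductions via transposition and conjugation by $w_0$ are also sound, so nothing is missing.
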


%=================================================
\section{Factorizations}\label{sec:factorization}
%=================================================

In this section, we describe generators for the semigroup of invertible $(n-1)$-nonnegative matrices and for the semigroup of unitriangular $(n-2)$-nonnegative matrices. We also give sets of relations for these matrices, on the basis of which we will construct our Bruhat cell analogues.

As motivation for these choices of semigroups, we notice that, as a result of Theorem~$\ref{thm:decomp}$, the smaller $k$ is, the less the semigroup of $k$-nonnegative matrices seems to resemble the structure of the semigroup of totally nonnegative matrices.
Note that $(n-1)$-nonnegative unitriangular matrices must be totally nonnegative.

%=================================================
\subsection{The \texorpdfstring{$k=n-1$}{k=n-1} Case} 
%=================================================

Let $M$ be an invertible $(n-1)$-irreducible matrix.
Notice that by Theorem~\ref{thm:decomp}, the only nonzero entries of $M$ must be on the diagonal, subdiagonal or superdiagonal.
Further, by Lemma~\ref{lem:zero-shadow} and Corollary~\ref{cor:more-decomp}, if $M$ has a zero entry in any of the above three diagonals, it must be totally nonnegative, and by the Loewner-Whitney theorem, diagonal.
Thus, $M$ must have all nonzero elements on its three diagonals. Without loss of generality we can normalize the subdiagonal to ones, leaving $2n-1$ entries that can vary; we notate these entries by $a_i = M_{i,i}$ and $b_i = M_{i,i+1}$. 

We will show that matrices of this form are $(n-1)$-irreducible precisely when certain key minors in the matrix are zero. In particular, we must impose more restrictions on the matrix entries to make said key minors zero.

We first observe that a minor in a tridiagonal matrix (where entries on the subdiagonal are ones) can be expressed in terms of a continued fraction.
We will notate continued fractions in the following way:
\begin{align*}
    \left[a_0;a_1,\ldots,a_m;b_1,\ldots,b_m\right] &:= a_0 - \frac{b_1}{a_1 - \frac{b_2}{a_2 - \cdots}}
\end{align*}
This is different from the standard notation, which adds recursively rather than subtracts. Let $C_i(j) = \left|M_{[i,i+j-1],[i,i+j-1]}\right|$. Then the following recursive relation is satisfied:
\begin{equation}\label{eqn:recurrence}
    C_i(0) = 1,\; C_i(1) = a_{i},\; C_i(r) = a_{i+r-1}C_i(r-1) - b_{i+r-2}C_i(r-2) \tag{$\star$}
\end{equation}
This is sometimes known as the recurrence defining the \emph{generalized continuant}.
The above statement is rephrased slightly in the following lemma.
\begin{lemma} \label{lem:continued-fraction}
    $C_i(r) = C_i(r-1)[a_{i+r-1};a_{i+r-2},\ldots a_i;b_{i+r-2},\ldots,b_i]$ when $C_i(k) \neq 0$ for $k < r$.
\end{lemma}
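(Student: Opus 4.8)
The plan is to prove the claim by induction on $r$, directly manipulating the recurrence \eqref{eqn:recurrence}. The statement to prove is that $C_i(r) = C_i(r-1)\left[a_{i+r-1};a_{i+r-2},\ldots,a_i;b_{i+r-2},\ldots,b_i\right]$ under the hypothesis that $C_i(k)\neq 0$ for all $k<r$, so in particular the continued fraction and all the divisions involved are well-defined.

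First I would handle the base cases. For $r=1$, the right-hand side is $C_i(0)\cdot[a_i;\,] = 1\cdot a_i = a_i = C_i(1)$, where the continued fraction with no $b$'s is just $a_0$. For $r=2$, the right-hand side is $C_i(1)\left[a_{i+1};a_i;b_i\right] = a_i\left(a_{i+1} - \tfrac{b_i}{a_i}\right) = a_i a_{i+1} - b_i = C_i(2)$ by \eqref{eqn:recurrence}, and $C_i(1)=a_i\neq 0$ by hypothesis so this is legitimate. These give a foothold for the induction.

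For the inductive step, assume the formula holds for $r-1$ (with the corresponding nonvanishing hypotheses, which follow from the hypothesis for $r$). Starting from \eqref{eqn:recurrence},
\begin{align*}
    C_i(r) &= a_{i+r-1}C_i(r-1) - b_{i+r-2}C_i(r-2)\\
    &= C_i(r-1)\left(a_{i+r-1} - b_{i+r-2}\frac{C_i(r-2)}{C_i(r-1)}\right).
\end{align*}
Now by the inductive hypothesis applied to $C_i(r-1)$, we have $C_i(r-1) = C_i(r-2)\left[a_{i+r-2};a_{i+r-3},\ldots,a_i;b_{i+r-3},\ldots,b_i\right]$, and since $C_i(r-2)\neq 0$ this gives
\[
    \frac{C_i(r-2)}{C_i(r-1)} = \frac{1}{\left[a_{i+r-2};a_{i+r-3},\ldots,a_i;b_{i+r-3},\ldots,b_i\right]}.
\]
Substituting, the parenthesized expression becomes exactly
\[
    a_{i+r-1} - \frac{b_{i+r-2}}{\left[a_{i+r-2};a_{i+r-3},\ldots,a_i;b_{i+r-3},\ldots,b_i\right]} = \left[a_{i+r-1};a_{i+r-2},\ldots,a_i;b_{i+r-2},\ldots,b_i\right],
\]
which is the definition of the continued fraction obtained by prepending one more layer. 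This closes the induction.

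The only real subtlety — and the thing worth stating carefully rather than the computation itself — is bookkeeping of the nonvanishing hypotheses: the claim for $r$ requires $C_i(k)\neq 0$ for $k<r$, and I should note that this is precisely what licenses both the division by $C_i(r-1)$ above and the application of the inductive hypothesis (which needs $C_i(k)\neq 0$ for $k<r-1$, a weaker condition), as well as guaranteeing that the denominators appearing inside the continued fraction $\left[a_{i+r-2};a_{i+r-3},\ldots;b_{i+r-3},\ldots\right]$, which by induction equal $C_i(r-1)/C_i(r-2)$ and its nested analogues, are nonzero. So the hypothesis is exactly strong enough and no extra work is needed; I do not anticipate any genuine obstacle beyond writing this dependency cleanly.
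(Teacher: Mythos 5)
Your proof is correct and follows essentially the same route as the paper's: induct on $r$, divide the recurrence \eqref{eqn:recurrence} by $C_i(r-1)$, and substitute the inductive expression for $C_i(r-1)/C_i(r-2)$ into the continued fraction. The only difference is that you spell out the base cases and the nonvanishing bookkeeping more explicitly, which the paper leaves implicit.
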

\begin{proof}
    It is obviously true for the base cases of the recurrence.
    Rewrite the equation as follows:
    \begin{align*}
        \frac{C_i(r)}{C_i(r-1)} &= a_{i+r-1} - b_{i+r-2}\frac{C_i(r-2)}{C_i(r-1)} \\
        & = a_{i+r-1} - \frac{b_{i+r-2}}{[a_{i+r-2};a_{i+r-3},\ldots,a_i;b_{i+r-3},\ldots,b_i]} \\
        & = [a_{i+r-1};a_{i+r-2},\ldots a_i;b_{i+r-2},\ldots,b_i]
    \end{align*}
\end{proof}
These recurrences also hold if we take the base case to be at the bottom corner rather than the top corner, thus relating $C_i(r)$ to $C_{i+1}(r-1)$.
To relate these recurrences to nonnegativity tests, we use the following lemma, which can be shown through direct computation.
\begin{lemma} \label{lem:tridiag-minors}
    Nonzero column-solid minors of tridiagonal matrices evaluate either to a product of entries on the same diagonal, or to a solid minor multiplied by a product of entries on the same diagonal.
\end{lemma}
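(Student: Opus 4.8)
The plan is to analyze the structure of an arbitrary column-solid minor $|M_{I,J}|$ of a tridiagonal matrix $M$ (with subdiagonal normalized to ones) by performing a block decomposition along the rows $I$, and to argue that nonvanishing forces $I$ to be nearly an interval. Recall $M$ has nonzero entries only on the main diagonal ($a_i$), superdiagonal ($b_i$), and subdiagonal (all ones), and $J = [p,q]$ is an interval. First I would observe that within the submatrix $M_{I,J}$, the entry in row $r$ and column $c$ is nonzero only when $c \in \{r-1,r,r+1\}$. Since $J$ is an interval of size $|I|$, a Laplace-type expansion (or direct combinatorial analysis of which permutations contribute to the determinant) shows that the only nonzero contributions come from diagonals of the submatrix that stay close to the main diagonal; in particular, if the row set $I = \{r_1 < r_2 < \cdots < r_m\}$ has a ``gap'' $r_{t+1} - r_t \geq 3$, then the submatrix $M_{I,J}$ is block upper/lower triangular in a way that forces one of the diagonal blocks to be strictly rectangular and banded, hence singular — so $|M_{I,J}| = 0$. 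Thus a nonzero column-solid minor has row gaps of size at most $2$.

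Next I would handle the two ways a gap of size exactly $2$ (i.e.\ $r_{t+1} = r_t + 2$) can still yield a nonzero minor: in that case the ``missing'' row $r_t + 1$ sits between $r_t$ and $r_{t+1}$, and for the determinant not to vanish the columns must split accordingly, with the block to the upper-left of the gap being forced onto its subdiagonal (contributing a product of $1$'s) or the block to the lower-right being forced onto its superdiagonal (contributing a product of $b$'s). Carrying this out, each such gap ``peels off'' a product of entries lying all on the same off-diagonal, and the minor factors as that product times the minor of the remaining rows and the correspondingly restricted columns. Iterating over all gaps, $|M_{I,J}|$ becomes a product of such single-diagonal products times the minor of a genuinely solid (interval $\times$ interval) tridiagonal block, which is exactly a generalized continuant $C_i(r)$ as in~\eqref{eqn:recurrence} — or, if the solid block is empty or $1\times 1$, just a product of entries on one diagonal. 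This gives the stated dichotomy: either a pure product of same-diagonal entries, or a solid minor times such a product.

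The main obstacle I anticipate is the bookkeeping in the gap analysis: one must verify carefully that when a row gap of size $2$ occurs, the column set genuinely splits at the forced location (so that the blocks are square and the ``peeled'' entries really do lie on a single diagonal), rather than the minor simply vanishing — and that adjacent gaps don't interact in a way that breaks the factorization. I would organize this by an induction on the number of gaps (equivalently on $m - (r_m - r_1 + 1)$), where the base case $I$ an interval is immediate, and the inductive step peels the topmost gap. A minor subtlety worth stating explicitly: since $|M_{I,J}| \neq 0$, the resulting solid sub-minor is itself a nonzero continuant, so Lemma~\ref{lem:continued-fraction} applies to it, which is how this lemma will be used downstream to convert nonnegativity tests into continued-fraction conditions.
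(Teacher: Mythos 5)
The paper itself offers no argument for this lemma beyond the phrase ``direct computation,'' so there is no written proof to match yours against; judged on its own, your proposal contains a genuine flaw in the gap analysis. The claim that a row gap $r_{t+1}-r_t\geq 3$ forces $|M_{I,J}|=0$ is false, and hence so is the conclusion that nonzero column-solid minors have row gaps of size at most $2$. For example, with $I=\{1,4\}$ and $J=\{2,3\}$ the submatrix is $\left(\begin{smallmatrix} b_1 & 0\\ 0 & 1\end{smallmatrix}\right)$, whose determinant $b_1$ is nonzero although the gap is $3$; likewise $I=\{1,2,5,6\}$, $J=\{2,3,4,5\}$ gives the nonzero minor $b_1b_2$. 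The correct threshold is $4$: if $r_{t+1}-r_t\geq 4$, rows $r_1,\dots,r_t$ meet only columns $\leq r_t+1$ while rows $r_{t+1},\dots,r_m$ meet only columns $\geq r_{t+1}-1\geq r_t+3$, so the interval $J$ would have to contain columns on both sides of $r_t+2$ yet omit $r_t+2$ itself, which is impossible. When the gap is exactly $3$, however, these two ranges are adjacent, the columns split at $r_t+1\mid r_t+2$, and the minor can survive, with every row above the gap forced onto the superdiagonal (a product of $b$'s) and every row below forced onto the subdiagonal (a product of $1$'s). Your induction, which peels only gaps of size $2$ and declares everything else zero, would therefore misclassify genuinely nonzero minors, so as written the argument is incomplete.

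The error is repairable, and the statement itself is safe, because in the missed configurations the value is still a product of single-diagonal entries (the subdiagonal contributes only $1$'s): either carry gaps of size $3$ through the peeling as a separate case, or argue once and for all that in a nonzero column-solid minor the submatrix is block triangular with an initial block forced onto the superdiagonal, at most one solid interval-by-interval tridiagonal block (the continuant), and a final block forced onto the subdiagonal. Relatedly, your assignment of the forced diagonals at a gap of size $2$ is reversed: blocks lying above the solid part are forced onto the superdiagonal (products of $b$'s) and blocks lying below it onto the subdiagonal (products of $1$'s), as the examples $I=\{1,2,4,5\}$ with $J=\{2,3,4,5\}$ (value $b_1b_2\,C_4(2)$) and $J=\{1,2,3,4\}$ (value $C_1(2)$) illustrate.
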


Now, we can consider how the irreducibility and nonnegativity conditions restrict the possible values of matrix entries.
\begin{theorem}
    Let $M$ be a tridiagonal matrix with 1s on the subdiagonal and nonzero entries on the diagonal and superdiagonal.
    Then $M$ is invertible, $(n-1)$-nonnegative, and $(n-1)$-irreducible if and only if the following hold:
    \begin{align*}
        b_i & > 0  \qquad \text{for all } i\\
        [a_x;a_{x-1},\ldots a_1;b_{x-1},\ldots,b_1] & > 0 \qquad \text{for } x < n-1 \\
        [a_{n-1};a_{n-2},\ldots a_{1};b_{n-2},\ldots,b_{1}] & = 0 \\
        [a_{n};a_{n-1},\ldots a_{2};b_{n-1},\ldots,b_2] & = 0
    \end{align*}
\end{theorem}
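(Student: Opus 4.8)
### Proof Proposal

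The plan is to characterize $(n-1)$-irreducibility and $(n-1)$-nonnegativity separately, then combine. The first goal is to translate nonnegativity of all order-$\leq n-1$ minors into conditions on the continued fractions. By Theorem~\ref{thm:col-solid}, it suffices to check column-solid minors of order at most $n-1$, and by Lemma~\ref{lem:tridiag-minors} every such nonzero minor factors as a product of superdiagonal entries $b_i$ (or subdiagonal $1$s, which are harmless) times a solid minor. Any solid minor of a tridiagonal matrix is either a principal minor $C_i(r)$ or, using the $1$s on the subdiagonal to expand along a corner, reduces to such a principal minor up to a sign-free factor. So $(n-1)$-nonnegativity is equivalent to: all $b_i > 0$, and all $C_i(r) \geq 0$ for $r \leq n-1$ (note $C_i(n)$, the full determinant, is excluded since its order is $n > n-1$). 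By Lemma~\ref{lem:continued-fraction}, $C_i(r) = C_i(r-1)\cdot[a_{i+r-1};\ldots;b_{i+r-2},\ldots]$, so by induction these principal minors being positive is equivalent to all the leading continued fractions $[a_x;a_{x-1},\ldots,a_1;b_{x-1},\ldots,b_1]$ for $x\le n-1$ being positive — with the caveat that once one continued fraction hits $0$, the recurrence forces the next $C_i(r)$ to become negative unless a $b$ vanishes, which is why strict positivity up through $x=n-1$ (minus the vanishing ones) is forced. I would handle this by first showing the continued fractions must be strictly positive for $x\le n-1$ and then separately locating exactly which of the remaining quantities vanish.

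The second goal is irreducibility: $M$ is $(n-1)$-irreducible iff no $e_i(a)^{-1}M$, $f_i(a)^{-1}M$, $Me_i(a)^{-1}$, or $Mf_i(a)^{-1}$ stays $(n-1)$-nonnegative for $a>0$. Since all $b_i>0$ and all subdiagonal entries are $1>0$, a short direct computation (using Lemma~\ref{lem:tech}) shows that factoring out an $e_i$ or $f_i$ from a tridiagonal matrix of this shape either breaks the tridiagonal shape or breaks nonnegativity immediately, \emph{unless} doing so creates a new zero in the allowed band. Tracing through Lemma~\ref{lem:tech}: the dangerous minors for $e_i(-x)^{-1}M$ are row-solid minors using row $i$ not row $i+1$, and by the argument in the proof of Theorem~\ref{thm:decomp} these become negative once $x$ exceeds a ratio of the form $|A|/|B|$. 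The matrix $M$ is reducible precisely when such a ratio is $\ge 0$ and attainable, i.e. when one of the "boundary" minors is already $0$. The boundary minors one can kill by an $e_i$ from the left turn out to correspond exactly to $C_1(n-1) = [a_{n-1};\ldots;b_{n-2},\ldots,b_1]\cdot C_1(n-2)$ being zero; killing by $f_i$ from the left (or symmetrically from the right) corresponds to $C_2(n-1) = [a_n;\ldots;b_{n-1},\ldots,b_2]\cdot C_2(n-2)$ being zero. Since the $C_i(n-2)$ factors are already forced positive by the nonnegativity analysis, this pins the two vanishing conditions to exactly the two continued-fraction equations in the statement.

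So the key steps, in order: (1) reduce nonnegativity to $b_i>0$ plus $C_i(r)\ge 0$ for $r\le n-1$ via Theorems~\ref{thm:col-solid} and Lemma~\ref{lem:tridiag-minors}; (2) use Lemma~\ref{lem:continued-fraction} and the recurrence (\ref{eqn:recurrence}) to convert the $C_i(r)$ conditions into statements about leading continued fractions, showing strict positivity is forced for $x<n-1$; (3) analyze when an inverse Chevalley generator (left/right, $e$/$f$) can be factored out while preserving the shape and nonnegativity, reducing irreducibility to the vanishing of specific solid minors; (4) show those vanishing minors factor as (a forced-positive $C_i(n-2)$) times the continued fraction, so they vanish iff $[a_{n-1};\ldots;b_{n-2},\ldots,b_1]=0$ and $[a_n;\ldots;b_{n-1},\ldots,b_2]=0$; (5) conversely, check that when all five displayed conditions hold, $M$ is genuinely $(n-1)$-nonnegative (all relevant $C_i(r)$ are products of positive continued fractions, hence positive, and the two full-width families contain the forced zero), and that no Chevalley generator can be extracted (any such extraction would have to further decrease a minor that is already zero, pushing it negative, or would destroy a positive $b_i$).

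The main obstacle I expect is step (3)–(4): carefully enumerating \emph{all} the ways a Chevalley generator could be factored out — from either side, with either $e$ or $f$, at each index $i$ — and checking in each case that the obstruction to extraction is controlled exactly by one of the two stated continued-fraction equations and not by some other auxiliary minor. In particular I need to be careful that extracting an $f_i$ from the left could in principle create a zero not in the last two "continuant" families but somewhere in the interior; Lemma~\ref{lem:zero-shadow} and Corollary~\ref{cor:more-decomp} should rule this out by forcing any interior zero to propagate and collapse $M$ to a diagonal matrix, contradicting $b_i>0$. Verifying that this propagation argument cleanly closes off every stray case is the delicate bookkeeping at the heart of the proof.
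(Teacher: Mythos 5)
Your proposal follows essentially the same route as the paper: reduce $(n-1)$-nonnegativity to column-solid minors (Theorem~\ref{thm:col-solid}), use Lemma~\ref{lem:tridiag-minors} and the continuant recurrence~\ref{eqn:recurrence} with Lemma~\ref{lem:continued-fraction} to force strict positivity of the continued fractions for $x<n-1$, and then reduce irreducibility to the observation that only $e_{n-1},f_1$ (left) and $e_1,f_{n-1}$ (right) can possibly be extracted, pinning irreducibility to the vanishing of $C_1(n-1)$ and $C_2(n-1)$. The only point you omit is the invertibility claim, which the paper dispatches in one line since the recurrence gives $\det M = -b_{n-1}C_1(n-2)<0$.
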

\begin{proof}
    By Theorem \ref{thm:col-solid}, $M$ is $(n-1)$-nonnegative if and only if column-solid minors of order at most $n-1$ are nonnegative.
    This is equivalent, using Lemma \ref{lem:tridiag-minors}, to the condition that all minors of the form $C_i(j)$ are nonnegative for $j \leq n-1$, and all of the $b_i$'s being nonnegative (that is, positive, since they are assumed to be nonzero).
    
    We claim that for $j < n-1$, we cannot have $C_i(j) = 0$.
    To see this, consider the smallest such $j$. By our recurrence relation~\ref{eqn:recurrence}, this must mean that $C_i(j+1)$ is negative, which breaks $(n-1)$-nonnegativity.
    If $C_i(j+1)$ is not a valid minor, then just take the recurrence in the opposite direction to get a contradiction for $C_{i-1}(j+1)$.
    Thus, we can use Lemma~\ref{lem:continued-fraction}, and say that $C_i(j)$ are all nonnegative precisely when the base cases, that is $a_i$'s, are positive, as well as all of the corresponding continued fractions.
    Among these continued fractions, notice that if $[a_k;a_{k-1},\ldots a_i;b_{k-1},\ldots,b_i] > 0$, then so are the continued fractions achieved by truncating on the right at any $j \leq k$.

    This gives us a necessary and sufficient condition for $k$-nonnegativity:
    \begin{align*}
        b_i & > 0 \\
        [a_x;a_{x-1},\ldots a_1;b_{x-1},\ldots,b_1] & > 0 \qquad \text{if } x < n-1 \\
        [a_{n-1};a_{n-2},\ldots a_1;b_{n-2},\ldots,b_1] & \geq 0 \\
        [a_n;a_{n-1},\ldots a_2;b_{n-1},\ldots,b_2] & \geq 0
    \end{align*}
    Second, to guarantee $k$-irreducibility, notice that the only Chevalley generators we need to worry about factoring out are $e_{n-1}$ and $f_1$ (from the left), and $e_1$ and $f_{n-1}$ (from the right).
    If we cannot factor any of these from $M$, then the order $n-1$ principal minors $C_2(n-1)$ and $C_{1}(n-1)$ are zero (if not, either the matrix form would be different from the one described or the matrix is no longer $k$-nonnegative).
    
    Finally, this matrix is invertible, as the recurrence relation clearly shows that it has a negative determinant.
\end{proof}
The above criterion can be simplified into a $(2n-3)$-parameter family.
This family, along with elementary Jacobi matrices, generates the semigroup of $(n-1)$-nonnegative matrices.\footnote{Note that we can actually reduce this to an $(n-3)$-parameter family, just by scaling the superdiagonal to ones via diagonal matrices.
However, we will lose relations between $h_i$ generators if we do so, making the corresponding cell structure much more complicated.}

The $(2n-3)$-parameter family of generators appears as follows.
    \[ K(\vec{a}, \vec{b}) = \begin{bmatrix}
        a_1 & a_1b_1 \\
        1 & a_2+b_1 & a_2b_2 \\
        & 1 & \ddots & \ddots \\
        & & \ddots & a_{n-2}+b_{n-3} & a_{n-2}b_{n-2} \\
        & & & 1 & b_{n-2} & b_{n-1}Y \\
        & & & & 1 & b_{n-1}X \\
    \end{bmatrix} \]
where $a_1,\ldots, a_{n-2}, b_1, \ldots, b_{n-1}$ are positive numbers, $Y = b_1\cdots b_{n-2}$ and 
    $$X = \left| K_{[2, n-2],[2,n-2]}\right| = \sum_{k=1}^{n-2}\left(\prod_{\ell=2}^k b_{\ell-1} \prod_{\ell=k+1}^{n-2} a_\ell\right).$$
All the minors of these $K$-generators, except the determinant, can be written as subtraction-free expressions in the parameters. The full list of solid minors can be found in Appendix~\ref{app:minors}, but in particular, we will use the following:
\begin{gather}
    \left|K_{[1,n-1],[1,n-1]}\right| = \left|K_{[2,n],[2,n]}\right| = 0 \\
    \left|K\right| = -a_1a_2\cdots a_{n-2}b_1b_2\cdots b_{n-1}
\end{gather}
We can show that this set of generators is minimal, in the sense that every element in the set is necessary.
\begin{theorem} \label{thm:n-1-minimality}
    Let $M = K(\vec{a},\vec{b})$ be a $K$-generator as defined above. Then if $RS = M$ in the semigroup of invertible $(n-1)$-nonnegative $n \times n$ matrices, one of $R$ or $S$ is a diagonal matrix.
\end{theorem}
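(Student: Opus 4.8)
The plan is to use the sign of $\det M$ to reduce the problem to the totally nonnegative case, where Loewner--Whitney applies. Since $RS=M$, the Cauchy--Binet formula gives $\det R\cdot \det S=\det M=-a_1\cdots a_{n-2}b_1\cdots b_{n-1}<0$, so $R$ and $S$ are both invertible and exactly one of them, say $R$, has positive determinant. (The case $\det S>0$ is symmetric and is discussed below.) Then every minor of $R$ of order at most $n-1$ is nonnegative because $R$ is $(n-1)$-nonnegative, and its single order-$n$ minor is positive, so $R$ is an \emph{invertible totally nonnegative} matrix. I would then assume toward a contradiction that $R$ is not diagonal.

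By the Loewner--Whitney theorem $R$ is a product of elementary Jacobi matrices; sorting these via the commutation relations yields an $LDU$-type factorization $R=F D_R E$ in which $F$ is lower unitriangular totally nonnegative (a product of generators $f_p(c)$ with $c>0$), $E$ is upper unitriangular totally nonnegative (a product of generators $e_q(c)$ with $c>0$), and $D_R$ is a diagonal matrix which is positive since $R$ is invertible. Because $R$ is not diagonal, $F$ and $E$ are not both the identity, giving two cases. If $F\neq I$, write $F=f_p(c)F'$ with $F'$ still lower unitriangular totally nonnegative; then $M=f_p(c)\,(F'D_R E S)$, and $F'D_R E S$ is a product of $(n-1)$-nonnegative matrices, hence $(n-1)$-nonnegative, and invertible because $\det f_p(c)=1$. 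Thus $M$ factors in the semigroup with left factor $f_p(c)\in\{f_i(a):a>0\}$, contradicting the $(n-1)$-irreducibility of the $K$-generator $M$. If instead $F=I$, then $E\neq I$ and $R=D_R E$, so $D_R^{-1}M=ES$; writing $E=e_q(c)E'$ gives $D_R^{-1}M=e_q(c)\,(E'S)$ with $E'S$ again $(n-1)$-nonnegative and invertible. The remaining point is that $D_R^{-1}M$ is itself $(n-1)$-irreducible: any factorization of $D_R^{-1}M$ having a Chevalley-generator factor yields one of $M$ after absorbing $D_R$ into the other factor, commuting it past the Chevalley generator if need be, which only rescales that generator's parameter by a positive number. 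This again contradicts the irreducibility of $M$, so $R$ must be diagonal.

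When it is $S$ rather than $R$ that has positive determinant, the same argument works after reflecting it left to right: $S$ is invertible totally nonnegative, one writes $S=FD_S E$, and one peels Chevalley generators off the right --- using $M e_p(c)^{-1}=RF D_S E'$ when $E\neq I$, and using the $(n-1)$-irreducibility of $M D_S^{-1}$ when $E=I$ --- in each case contradicting the irreducibility of $M$ unless $S$ is diagonal. (Alternatively, one can dispatch this case at once by passing to $M^{T}$, which is again a $K$-generator after conjugating by the longest permutation.)

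I expect the substantive step to be the determinant computation and the resulting dichotomy: it is what lets us upgrade the weak hypothesis ``$R$ is $(n-1)$-nonnegative'' to the strong one ``$R$ is totally nonnegative,'' and so bring Loewner--Whitney and the $LDU$ factorization to bear; note in particular that the two factors play genuinely different roles, since the factor of negative determinant need not be diagonal. Everything after that is bookkeeping: checking that the matrices obtained by peeling off a single Chevalley generator are still invertible and $(n-1)$-nonnegative, and checking that a positive diagonal matrix commutes with each Chevalley generator up to a positive rescaling of its parameter, so that $(n-1)$-irreducibility is preserved under left or right multiplication by such a matrix.
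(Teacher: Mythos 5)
Your proposal is correct, but it takes a genuinely different route from the paper. The paper's proof is a short, local computation with matrix entries: since $R$ and $S$ are invertible and $(n-1)$-nonnegative, Lemma~\ref{lem:zero-shadow} forces their diagonal entries to be nonzero; then expanding the entries $m_{i,i+2}=m_{i+2,i}=0$ and $m_{i,i+1},m_{i+1,i}>0$ of the tridiagonal matrix $M$ as sums of nonnegative products pins down the possible zero patterns of $R$ and $S$ and forces one of them to be diagonal. You instead argue globally: from $\det R\,\det S=\det M<0$ the factor of positive determinant is invertible totally nonnegative, and Loewner--Whitney together with the $LDU$ decomposition lets you peel a Chevalley generator off that factor (commuting it past the positive diagonal part when necessary, which only rescales the parameter by a positive constant), contradicting the $(n-1)$-irreducibility of $K(\vec{a},\vec{b})$. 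Both routes work. Yours buys conceptual clarity --- it isolates exactly which features are used (the negative determinant plus irreducibility of the $K$-family) and yields the sharper observation that the diagonal factor is necessarily the one with positive determinant --- but it is not self-contained: it leans on the earlier characterization theorem, which is what guarantees that $K$-generators are $(n-1)$-irreducible (no circularity arises, since that theorem is proved independently of this minimality statement), whereas the paper's argument needs nothing beyond Lemma~\ref{lem:zero-shadow} and the multiplication formula. One small caveat on your closing parenthesis: reducing the $\det S>0$ case to the first case via $M\mapsto w_0M^{T}w_0$ requires checking that this map sends the $K$-family to itself (it does, since minors are preserved and the two vanishing corner minors are exchanged), but your direct symmetric argument already covers that case, so nothing essential is missing.
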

\begin{proof}
    Suppose we have $RS = M$.
    From Lemma~\ref{lem:zero-shadow} we know that $R$ and $S$ have nonzero diagonals.
    Thus, we know that $r_{i,i+2}$, $s_{i,i+2}$ and their transpose analogues are all 0 from the formula for matrix multiplication.
    Further, we know that for every $i$, one of $r_{i,i+1}$ and $s_{i+1,i+2}$ is 0, and one of $r_{i,i+1}$ and $s_{i,i+1}$ is positive.
    Together, these show that $R$ and $S$ can only be as described above.
\end{proof}

We now turn to relations involving generators of the form $K(\vec{a},\vec{b})$.
It can be seen by direct computation that the following relations hold.
\begin{align} \label{n-1-rels}
    e_i(x) K(\vec{a},\vec{b}) &= K(\vec{A},\vec{B}) e_{i+1}(x'), \text{ where } 1 \leq i \leq n-2 \\
    e_{n-1}(x) K(\vec{a},\vec{b}) &= K(\vec{A},\vec{B}) f_{n-1}(x') h_{n-1}(c) \\
    f_{i+1}(x) K(\vec{a},\vec{b}) &= K(\vec{A},\vec{B}) h_{i+1}(1/w) f_i (x) h_{i}(w), \text{ where } 1 \leq i \leq n-2 \\
    f_{1}(x) K(\vec{a},\vec{b}) &= K(\vec{A},\vec{B}) e_{1}(x') h_1(c) \\
    h_i(x) K(\vec{a}, \vec{b}) &= K(\vec{A}, \vec{B}) h_{i-1}(x), \text{ where } 2 \leq i \leq n \\
    h_1(x) K(\vec{a}, \vec{b}) &= K(\vec{A}, \vec{B}) \\
    K(\vec{a},\vec{b})h_n(x) &= K(\vec{A},\vec{B})
\end{align}

The values of the parameters of the relations can be found in Appendix~\ref{app:rels}. The expressions for new parameters are always subtraction-free rational expressions of the old parameters.
Thus, just like the relations between elementary Jacobi matrices, past showing equality of factorizations with parameters, they show equality of factorizations, considered without parameters and just as words (that is, as sets of matrices with the same factorization word).

Finally, one relation is missing: this is the relation involving products of $K$-generators.
The characterization of Bruhat cells in Lemma~\ref{bruhat-description} shows that such a relation exists.
However, the following lemma shows that ignoring this relation will not affect our discussion.
\begin{lemma} \label{lem:two-kays}
    $K(\vec{A},\vec{B})K(\vec{C},\vec{D})$ is totally nonnegative and can always be written in a factorization that contains fewer instances of $K$ and has no more Chevalley generators than $K\cdot K$ has parameters.
\end{lemma}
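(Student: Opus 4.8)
The plan is to verify total nonnegativity by a short Cauchy--Binet computation, and then to build the claimed factorization by first stripping $P:=K(\vec A,\vec B)K(\vec C,\vec D)$ down to a tridiagonal totally nonnegative matrix, and then factoring that tridiagonal remainder into bidiagonal and diagonal pieces. For total nonnegativity, I would apply Cauchy--Binet to a minor $|P_{I,J}|$ of order $r$: when $r\le n-1$, $|P_{I,J}|=\sum_{|S|=r}|K(\vec A,\vec B)_{I,S}|\,|K(\vec C,\vec D)_{S,J}|$ is a sum of products of nonnegative numbers since both $K$-generators are $(n-1)$-nonnegative; and the only minor of order $n$ is $\det P=\det K(\vec A,\vec B)\cdot\det K(\vec C,\vec D)$, which is positive because each factor is the negative of a product of positive parameters. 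Hence $P$ is totally nonnegative and, in particular, invertible.

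Next I would observe that $P$ is pentadiagonal: $P_{ij}=\sum_s K(\vec A,\vec B)_{is}K(\vec C,\vec D)_{sj}$ vanishes unless some $s$ satisfies $|i-s|\le 1$ and $|s-j|\le 1$, i.e.\ unless $|i-j|\le 2$. I would then run the reduction underlying Theorem~\ref{thm:decomp} (repeated use of Lemma~\ref{lem:entry-zeros}) on $P$, but only to clear the at most $n-2$ nonzero superdiagonal band-$2$ entries $P_{i,i+2}$, handled one column at a time from right to left, each removed by factoring out a single $e_i$. Because $P$ is pentadiagonal and we proceed right to left, no entry outside the band is created; and if some pivot $P_{i+1,j}$ vanishes, Lemma~\ref{lem:zero-shadow} forces $P_{i,j}=0$ as well, so the process never stalls. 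The transpose version---factoring $f_i$'s out from the right, which leaves the already-cleared superdiagonal band intact---then removes the at most $n-2$ subdiagonal band-$2$ entries. This exhibits $P$ as a product of at most $n-2$ factors $e_i$, a tridiagonal matrix $M$, and at most $n-2$ factors $f_i$; moreover $M$ stays $(n-1)$-nonnegative by Lemma~\ref{lem:entry-zeros} and has $\det M=\det P>0$, so $M$ is in fact totally nonnegative (all minors of order $<n$ are nonnegative, and the determinant is positive).

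Finally, an invertible tridiagonal totally nonnegative matrix is classically a product of at most $n-1$ Chevalley generators $f_i$, at most $n$ diagonal generators $h_i$, and at most $n-1$ Chevalley generators $e_i$: its $LDU$ factorization has bidiagonal lower and upper factors because $M$ is tridiagonal, and those factors are totally nonnegative because $M$ is. Concatenating everything writes $P$ as a product of at most $(n-2)+(n-1)+(n-1)+(n-2)=4n-6$ Chevalley generators, together with at most $n$ diagonal matrices and \emph{no} $K$-generators. Since $K\cdot K$ contains two instances of $K$ and has $2(2n-3)=4n-6$ parameters, this is precisely the factorization asserted by the lemma.

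The Cauchy--Binet step and the closing bidiagonal factorization are routine; the work, and the main obstacle, lies in the middle reduction. There one must check that clearing the band-$2$ entries in the prescribed right-to-left order never produces entries outside the pentadiagonal band and never stalls at a vanishing pivot (which is exactly where Lemma~\ref{lem:zero-shadow} is used), and that the transpose pass handling the subdiagonal does not reintroduce superdiagonal band-$2$ entries. This is bookkeeping of the same flavor as the proof of Theorem~\ref{thm:decomp}, but it is the crux of the argument.
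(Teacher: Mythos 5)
Your proof is correct, but it reaches the counting bound by a different route than the paper. You and the paper agree on the first step (Cauchy--Binet shows the product is totally nonnegative since both factors are $(n-1)$-nonnegative and the two negative determinants multiply to a positive one) and on the observation that the product is pentadiagonal. From there the paper is soft and brief: it invokes Lemma~\ref{bruhat-description} to note that a pentadiagonal TNN matrix lies in a double Bruhat cell whose reduced word uses at most $4n-6$ Chevalley letters, which immediately gives a $K$-free factorization within the parameter budget of $K\cdot K$. You instead argue constructively: a right-to-left pass with Lemma~\ref{lem:entry-zeros} (and Lemma~\ref{lem:zero-shadow} to handle vanishing pivots) strips the at most $2(n-2)$ band-two entries while preserving the pentadiagonal shape and total nonnegativity, and the tridiagonal core is then written as $L D U$ with bidiagonal unitriangular factors, contributing at most $2(n-1)$ further Chevalley generators, for a total of $4n-6$. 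Your bookkeeping checks out (the fill-in at $(i,i+3)$, resp.\ $(i+3,i)$, is avoided precisely by the right-to-left order, and the column pass cannot reintroduce cleared superdiagonal entries because the would-be contribution comes from a band-three entry), and the LDU step for invertible TNN matrices is standard and already cited in the paper's background. The trade-off: the paper's argument is two lines but leans on the Bruhat-cell characterization and a length bound for pentadiagonal permutations, while yours is longer but self-contained, explicit, and essentially algorithmic, reusing the Theorem~\ref{thm:decomp} machinery; both yield exactly the $4n-6$ bound demanded by the lemma.
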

\begin{proof}
    Since the only minor that is negative is the full determinant, the product of two $K$s must be TNN.
    Further, it is pentadiagonal; any pentadiagonal TNN matrix can be factored using at most $4n-6$ parameters (this can be seen via Lemma~\ref{bruhat-description}).
    Thus, the corresponding reduced word in $e_i$s and $f_i$s has length between $4n-8$ and $4n-6$.
    $KK$ has length $4n-6$, so we can always find a shorter word using only Chevalley generators.
\end{proof}

%=================================================
\subsection{The \texorpdfstring{$k=n-2$}{k=n-2} Case} 
%=================================================

And now a similar analysis for $(n-2)$-nonnegative unitriangular matrices.
By Theorem~\ref{thm:decomp}, we know that any $(n-2)$-irreducible matrix here must only have three bands that are nonzero: the diagonal (which consists of ones), the super-diagonal, and the super-super-diagonal.
We will refer to matrices of this form as \emph{pentadiagonal unitriangular} matrices.
Lemma~\ref{lem:zero-shadow} and Corollary \ref{cor:more-decomp} tell us that if the matrices are not diagonal, all of the entries in this band of three diagonals must be nonzero.
Because we are working with a diagonal band, we can use the same recurrences as in the $k = n-1$ case, ignoring the first column and last row.
We will present the theorems necessary to get our generators with notation similar to what was used before. 
Entries on the superdiagonal are $a_i$'s, and entries on the super-superdiagonal are $b_i$'s.
\begin{theorem}
    Let $S$ be a pentadiagonal unitriangular matrix with nonzero entries on the diagonal band. Then $S$ is invertible, $(n-2)$-nonnegative, and $(n-2)$-irreducible if and only if the following hold:
    \begin{align*}
        a_i, b_i & > 0 \\
        [a_x;a_{x-1},\ldots a_1;b_{x-1},\ldots,b_1] & > 0 \qquad \text{if } x < n-2 \\
        [a_{n-2};a_{n-3},\ldots a_{1};b_{n-3},\ldots,b_{1}] & = 0 \\
        [a_{n-1};a_{n-2},\ldots a_{2};b_{n-2},\ldots,b_{2}] & = 0
    \end{align*}
\end{theorem}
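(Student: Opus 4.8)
The plan is to mirror, almost line for line, the proof of the corresponding statement in the $k=n-1$ case, using the reduction already flagged above: deleting the first column and the last row of a pentadiagonal unitriangular matrix $S$ produces the $(n-1)\times(n-1)$ tridiagonal \emph{core} $T := S_{[1,n-1],[2,n]}$, which after the obvious reindexing carries $1$s on its subdiagonal, the $a_i$ on its diagonal, and the $b_i$ on its superdiagonal. Thus $T$ occupies exactly the role that the tridiagonal matrix $M$ played in the $k=n-1$ analysis, the solid minors $C_i(j) = |T_{[i,i+j-1],[i,i+j-1]}| = |S_{[i,i+j-1],[i+1,i+j]}|$ obey the continuant recurrence~\eqref{eqn:recurrence}, and every column-solid minor of $S$ that involves column $1$ or row $n$ collapses (expand along that row/column, whose only nonzero entry is a $1$) to a minor of $T$. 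Note that invertibility is now automatic: a unitriangular matrix has determinant $1$, so only $(n-2)$-nonnegativity and $(n-2)$-irreducibility need argument.

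For the nonnegativity equivalence I would first invoke Theorem~\ref{thm:col-solid} to reduce to column-solid minors of order at most $n-2$, and then the pentadiagonal analogue of Lemma~\ref{lem:tridiag-minors} (proved by the same forced-entry argument: any nonzero column-solid minor of $S$ is forced to use certain entries on the $b$-band, and peeling these off leaves a monomial in the $b_i$ times a solid minor of the core $T$, which for tridiagonal $T$ is either a further monomial in the $b_i$ or one of the $C_i(j)$). This reduces $(n-2)$-nonnegativity to $a_i,b_i\ge 0$ together with $C_i(j)\ge 0$ for every legitimate $i$ and every $j\le n-2$; since the $a_i,b_i$ are assumed nonzero they are in fact positive. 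Exactly as in the $k=n-1$ argument, if some $C_i(j)$ with $j<n-2$ vanished, choosing the least such $j$ and running~\eqref{eqn:recurrence} in whichever direction yields a genuine minor of $S$ would force the next continuant strictly negative, a contradiction; so every $C_i(j)$ with $j<n-2$ is strictly positive. Lemma~\ref{lem:continued-fraction} then rewrites the ratios $C_i(j)/C_i(j-1)$ as the continued fractions $[a_{i+j-1};a_{i+j-2},\ldots,a_i;b_{i+j-2},\ldots,b_i]$, and because a positive continued fraction stays positive under right-truncation the whole family of conditions collapses to: $a_i,b_i>0$; $[a_x;a_{x-1},\ldots,a_1;b_{x-1},\ldots,b_1]>0$ for $x<n-2$; and $C_1(n-2)\ge 0$ and $C_2(n-2)\ge 0$ (these two being the only order-$(n-2)$ core minors that fit inside the $n\times n$ matrix $S$, since $C_i(n-2)$ needs columns up to $i+n-2\le n$). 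The last two inequalities are precisely $[a_{n-2};a_{n-3},\ldots,a_1;b_{n-3},\ldots,b_1]\ge 0$ and $[a_{n-1};a_{n-2},\ldots,a_2;b_{n-2},\ldots,b_2]\ge 0$.

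It then remains to show that, for such an $S$, $(n-2)$-irreducibility is equivalent to upgrading those two inequalities to equalities. Because $S$ is unitriangular the only generators one could factor off are the $e_i$; inspecting which row operation $e_i(-c)\,S$ or column operation $S\,e_j(-c)$ preserves the pentadiagonal unitriangular form (for any other index a nonzero, hence negative, entry is created outside the band, violating $(n-2)$-nonnegativity since order-$1$ minors are checked) leaves only four candidates: $e_{n-2}$ and $e_{n-1}$ on the left, $e_1$ and $e_2$ on the right. For each, Lemma~\ref{lem:tech} shows that applying it with a small positive parameter alters exactly one of $C_1(n-2)$, $C_2(n-2)$ by a nonzero multiple of a lower-order core minor, which is strictly positive by the previous paragraph, while leaving all other order-$\le (n-2)$ minors either unchanged or perturbed continuously from a strictly positive value. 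Hence, just as in Lemma~\ref{lem:entry-zeros}, if $C_1(n-2)>0$ one may factor off $e_{n-2}$ (or $e_1$) by a small positive amount and remain $(n-2)$-nonnegative, and similarly $C_2(n-2)>0$ permits factoring off $e_{n-1}$ (or $e_2$); so irreducibility forces $C_1(n-2)=C_2(n-2)=0$. Conversely, when both vanish, any positive parameter in any of the four operations drives the relevant minor strictly negative, no other $e_i$ can be removed while staying pentadiagonal unitriangular, and no $e_i$ can be removed at all without leaving the semigroup, so $S$ is $(n-2)$-irreducible. Together with the automatic invertibility this yields the claimed equivalence.

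The main obstacle will not be any single computation --- the Cauchy-Binet / Lemma~\ref{lem:tech} evaluations and the continued-fraction rewrites are all routine --- but the boundary bookkeeping: keeping straight which $C_i(j)$ are genuine minors of the $n\times n$ matrix $S$ as opposed to only of the $(n-1)\times(n-1)$ core $T$, running~\eqref{eqn:recurrence} in the correct direction near the top-left versus the bottom-right corner, and verifying carefully that it is exactly the four corner Chevalley generators that are in play and that they are governed precisely by the two corner minors $C_1(n-2)$ and $C_2(n-2)$. Getting the off-by-one indexing right throughout (the shift from $S$'s bands to $T$'s bands, and the start indices of the truncated continued fractions) is where the care is needed.
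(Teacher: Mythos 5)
Your proposal is correct and follows essentially the same route as the paper, whose proof of this theorem is simply the remark that the argument is the same as in the $(n-1)$ tridiagonal case (column-solid reduction, the continuant recurrence and continued-fraction lemma for nonnegativity, and the four corner Chevalley generators governed by the two order-$(n-2)$ core minors for irreducibility). You have merely spelled out that analogy — including the core $S_{[1,n-1],[2,n]}$, the automatic invertibility, and the identification of $e_{n-2},e_{n-1}$ (left) and $e_1,e_2$ (right) as the only possible factors — in more detail than the paper does.
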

The proof is very similar to the analogous proof for $n-1$-nonnegative tridiagonal matrices. The only difference is that more minors must be zero in $(n-2)$-irreducible matrices, two more than for the $n-1$ case.

The above characterization for $(n-2)$-irreducible unitriangular matrices can be simplified into a $(2n-5)$-parameter family.
This family, along with $e_i(a)$ and the identity matrix, generate all $(n-2)$-nonnegative unitriangular matrices.
Matrices in our parameter family appear as follows:
    \[ T(\vec{a},\vec{b}) = \begin{bmatrix}
        1 & a_1 & a_1b_1 & \\
        & 1 & a_2 + b_1 & a_2b_2 \\
        & & 1 & \ddots & \ddots & \\
        & & & \ddots & a_{n-3}+b_{n-4} & a_{n-3}b_{n-3} \\
        & & & & 1 & b_{n-3} & b_{n-2}Y \\
        & & & & & 1 & b_{n-2}X \\
        & & & & & & 1
    \end{bmatrix} \]
where $a_1,\ldots, a_{n-3}, b_1, \ldots, b_{n-2}$ are positive numbers, $Y = b_1\cdots b_{n-3} $ and
   $$ X = \left| T_{[2, n-3],[3,n-2]}\right| = \sum_{k=1}^{n-3}\left(\prod_{\ell=2}^k b_{\ell-1} \prod_{\ell=k+1}^{n-3} a_\ell\right)$$
The minor expressions here are essentially identical to those in Appendix~\ref{app:minors}, since 
\[
T(\vec{a},\vec{b})_{[1,n-1],[2,n]} = K(\vec{a},\vec{b})
\]
The only difference between our generators is the extra row and column added.
As before, we can give a minimality condition on our generating set.

\begin{theorem} \label{thm:n-2-minimality}
    If $RS = T(\vec{a},\vec{b})$ in the semigroup of invertible $(n-2)$-nonnegative $n \times n$ matrices, one of $R$ or $S$ is the identity.
\end{theorem}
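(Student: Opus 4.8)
\emph{Proof sketch (proposal).}
The plan is to rerun the proof of Theorem~\ref{thm:n-1-minimality}, replacing the tridiagonal $K$-generators by the pentadiagonal $T$-generators. Suppose $RS=T(\vec a,\vec b)$; we may take $R$ and $S$ unitriangular (as throughout this subsection), and all their entries are nonnegative since they are order-$1$ minors. In the expansion $(RS)_{ij}=\sum_m r_{im}s_{mj}$ every summand is nonnegative, so each summand vanishes whenever $(RS)_{ij}=t_{ij}=0$. Using the pentadiagonal shape of $T$ together with the $m=i$ and $m=j$ terms (recall $r_{ii}=s_{jj}=1$), this forces $r_{ij}=s_{ij}=0$ whenever $|i-j|>2$, so $R$ and $S$ are themselves pentadiagonal unitriangular. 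Expanding $(RS)_{i,i+3}=0$ and $(RS)_{i,i+4}=0$ in the same way yields the orthogonality relations
\[
 r_{i,i+1}s_{i+1,i+3}=r_{i,i+2}s_{i+2,i+3}=r_{i,i+2}s_{i+2,i+4}=0,
\]
while $(RS)_{i,i+1}=t_{i,i+1}>0$ and $(RS)_{i,i+2}=t_{i,i+2}>0$ give the positivity relations $r_{i,i+1}+s_{i,i+1}>0$ and $s_{i,i+2}+r_{i,i+1}s_{i+1,i+2}+r_{i,i+2}>0$.

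I would then reduce to showing that one of $R,S$ has vanishing superdiagonal: by Lemma~\ref{lem:zero-shadow}, $r_{i,i+1}=0$ already forces $r_{i,i+2}=0$, so a pentadiagonal unitriangular matrix with empty superdiagonal is the identity. Assume toward a contradiction that both $R$ and $S$ have a positive superdiagonal entry. Starting from the smallest index where $R$ (say) has a positive superdiagonal or super-superdiagonal entry, the orthogonality relations force certain entries of $S$ to vanish, Lemma~\ref{lem:zero-shadow} propagates those zeros up and to the right within $S$, and the positivity relations then force the complementary entries of $R$ to be positive. Iterating this cascade one eventually reaches an index $i$ at which the $2\times 2$-minor inequality $s_{i,i+1}s_{i+1,i+2}\ge s_{i,i+2}$ for $S$ is violated --- a positivity relation such as $s_{i,i+1}+r_{i,i+1}=t_{i,i+1}$ with $r_{i,i+1}>0$ has pinned a superdiagonal entry of $S$ strictly below what the forced value of $s_{i,i+2}$ permits --- or else $(RS)_{i,i+2}$ is pinned strictly below $t_{i,i+2}$. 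Either outcome is a contradiction, so one of $R,S$ has no superdiagonal entry and is therefore the identity.

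The hard part is the bookkeeping in the second paragraph. In the $K$-setting the single off-diagonal band makes the cascade essentially one-dimensional, and only the weaker conclusion ``one of $R,S$ is diagonal'' is required; here the superdiagonal and super-superdiagonal bands of $R$ and $S$ interact through three families of orthogonality relations and two families of positivity relations, so one must enumerate carefully which nonzero-band patterns of $R$ and $S$ are jointly compatible with $RS=T$, using at several steps the full inequality form of the $2\times 2$ minors of both $R$ and $S$, not merely the ``a zero forces a zero'' consequences recorded in Lemma~\ref{lem:zero-shadow}. Organizing this case analysis cleanly --- for instance as an induction on $n$ with the small cases checked directly --- is the crux of the argument; the reductions in the first paragraph are routine applications of Cauchy--Binet and Lemma~\ref{lem:zero-shadow}.
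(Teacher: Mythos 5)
Your first paragraph is sound and is exactly the route the paper intends (its own proof is just "argue as in Theorem~\ref{thm:n-1-minimality}"): reading the statement inside the unitriangular semigroup, entrywise nonnegativity plus the $m=i$ and $m=j$ terms of $(RS)_{ij}=t_{ij}=0$ force $R,S$ to be pentadiagonal, and your orthogonality relations, positivity relations, and the reduction ``empty superdiagonal $\Rightarrow$ identity'' via Lemma~\ref{lem:zero-shadow} are all correct. The gap is in the second paragraph, and it is not mere bookkeeping: the contradiction you are aiming for cannot, in general, be located among entries and $2\times 2$ minors. All of the tools you permit yourself (nonnegative entries, pentadiagonal shape, the three orthogonality families, the two positivity families, zero-shadow, and every $2\times2$ minor inequality for both factors) can be satisfied simultaneously with neither factor equal to the identity. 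Concretely, take $n=5$ and all parameters equal to $1$, so $T$ has superdiagonal $(1,2,1,2)$ and second superdiagonal $(1,1,1)$. Put $R=I+\tfrac12 E_{23}+\tfrac12 E_{34}$ (a bidiagonal nonnegative unitriangular matrix, hence totally nonnegative) and $S=R^{-1}T$, which has superdiagonal $(1,\tfrac32,\tfrac12,2)$, second superdiagonal $(1,\tfrac34,0)$, and zeros elsewhere above the band. Then $RS=T$, every orthogonality and positivity relation holds, zero-shadow holds, and every $2\times2$ minor of $R$ and of $S$ is nonnegative (the consecutive ones are $\tfrac12,0,1$), yet $R\neq I\neq S$. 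This is of course not a counterexample to the theorem: $S$ fails $3$-nonnegativity, since $\lvert S_{\{1,2,3\},\{2,3,4\}}\rvert=-\tfrac12$. But that failure is invisible to your toolkit, so no cascade built only from those ingredients can terminate in a contradiction; the envisioned endgame (``a $2\times2$ minor of $S$ is violated or $(RS)_{i,i+2}$ is pinned below $t_{i,i+2}$'') simply does not occur here.

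The missing idea is that the proof must exploit minors of order up to $n-2$. The natural mechanism is Cauchy--Binet applied to the vanishing order-$(n-2)$ minors of $T$, e.g.\ $\lvert T_{[1,n-2],[2,n-1]}\rvert=0=\lvert T_{[2,n-1],[3,n]}\rvert$: since $R$ and $S$ are assumed $(n-2)$-nonnegative, every term in the expansion is nonnegative and hence vanishes, and the terms indexed by $[1,n-2]$ and $[2,n-1]$ (where the complementary minor of a unitriangular factor equals $1$) force $\lvert S_{[1,n-2],[2,n-1]}\rvert=\lvert R_{[1,n-2],[2,n-1]}\rvert=0$, and similarly for the shifted minor. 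These continuant-type vanishing conditions on the bands of $R$ and $S$, combined with your entrywise relations, are what actually pin one factor to be the identity; in the example above it is precisely $\lvert S_{[1,3],[2,4]}\rvert$ that goes negative. Two smaller remarks: as literally stated over the full invertible $(n-2)$-nonnegative semigroup the claim fails trivially (take $R$ a nonidentity positive diagonal matrix and $S=R^{-1}T$), so your unitriangular reading is the right one and should be made explicit; and an induction on $n$ is an awkward organizing device here, since both the nonnegativity threshold $n-2$ and the generator $T$ change with $n$.
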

This can be seen via an argument similar to Theorem~\ref{thm:n-1-minimality}, and as a result, we know that any generating set of the semigroup must include all elements of $T$.

We now list a set of relations involving generators of the form $T(\vec{a},\vec{b})$.
It can be seen by direct computation that the following relations hold:
\begin{align} \label{n-2-rels}
    e_i(x) T(\vec{a},\vec{b}) &= T(\vec{A},\vec{B}) e_{i+2}(x'), \text{ where } 1 \leq i \leq n-3 \\
    e_{n-2}(x) T(\vec{a},\vec{b}) &= T(\vec{A},\vec{B}) e_1(x') \\
    e_{n-1}(x) T(\vec{a},\vec{b}) &= T(\vec{A},\vec{B}) e_2(x')
\end{align}

And finally, we have a slightly different relation: a  matrix $e_{n-1}(u)e_{n-2}(v) T(\vec{a},\vec{b})$ can be parametrized also in exactly one of three different ways, that are specified below.
\begin{align}\label{n-2-cellsplit}
     e_{n-1}e_{n-2} T = e_{n-2}e_{n-1}T \sqcup e_{n-2}\cdots e_1 e_{n-1}\cdots e_2 \sqcup e_{n-2}\cdots e_1 e_{n-1}\cdots e_1 
     \end{align}
The proof that Equation~\ref{n-2-cellsplit} holds, as well as the parameters of all these relations, can be found in Appendix~\ref{app:rels}.
We will see when we define Bruhat cells for this case that the fourth relation splits one cell into three based on the value of the $[1,n-1],[2,n]$ minor: the values of the minor in the three split factorizations are negative, zero, and positive respectively.
This appears to represent splitting the open ball of the cell into two open balls, separated by a ball of lower dimension.

Analogously to the $(n-1)$ case, we can prove that one more relation exists between products of $T$s, and that it can be safely ignored.
\begin{lemma} \label{lem:two-tees}
    $T(\vec{A},\vec{B})T(\vec{C},\vec{D})$ can always be written in a factorization that uses the same number of parameters or fewer, such that the factorization contains only one instance of $T$.
\end{lemma}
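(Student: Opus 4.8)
The plan is to mimic the structure of Lemma~\ref{lem:two-kays}: identify which minors of $T(\vec{A},\vec{B})T(\vec{C},\vec{D})$ are zero, use this to pin down the ``Bruhat type'' of the product (i.e.\ which $U(w)$ it lies in, in the language of the decomposition of unitriangular TNN matrices), and then compare the length $\ell(w)$ of the corresponding reduced word against the $2n-5 + 2n-5 = 4n-10$ parameters that $T\cdot T$ uses, together with the one instance of $T$ we are forced to keep. First I would observe that by Theorem~\ref{thm:decomp} applied to the product (which is $(n-2)$-nonnegative, being a product of two such matrices), we can factor off Chevalley generators $e_i$ from the left until what remains is a pentadiagonal unitriangular matrix; moreover, if that remainder has a zero on its superdiagonal or super-superdiagonal band, Lemma~\ref{lem:zero-shadow} and Corollary~\ref{cor:more-decomp} force it to be the identity, so in that degenerate case the product is already totally nonnegative, hence a single element of some $U(w)$ expressible purely in $e_i$'s, and there is nothing to prove. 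So we may assume the remainder is a genuine pentadiagonal unitriangular matrix with all band entries nonzero.

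The crucial structural input is that $T(\vec{A},\vec{B})T(\vec{C},\vec{D})$ is ``almost'' TNN: by the minor formulas (the product is pentadiagonal times pentadiagonal, hence a banded matrix of bandwidth $4$ on the upper side) and the Cauchy--Binet formula, one checks that its minors of order $\leq n-2$ are nonnegative and, because each factor has its two top-right order-$(n-1)$ minors $|{}_{[1,n-1],[2,n]}|$ and $|{}_{[2,n],[3,n]}|$ equal to zero, the relevant large minors of the product vanish as well. Concretely, I would compute that $\bigl|(TT)_{[1,n-1],[3,n+1]}\bigr|$ and similar ``extreme'' minors vanish, which places the product in a Bruhat cell $U(w)$ for a permutation $w$ that is not the longest element — in fact $w$ has a reduced word of length at most $4n-10$, since the banded shape of a $(4$-banded$)$ unitriangular matrix limits how many $e_i$'s appear in any factorization (again via Lemma~\ref{bruhat-description}). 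Then $T\cdot T$ is a factorization of length $4n-10$, so by Theorem~\ref{thm:tnn-decomp} / the TNN decomposition of unitriangular matrices there is a factorization into at most $4n-10$ Chevalley generators; absorbing one of the two $T$'s (say by writing the remainder after factoring off Chevalley's on the left as a single $T$ via the characterization theorem for $(n-2)$-irreducible pentadiagonal matrices) gives the desired factorization with only one $T$.

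The main obstacle I expect is the bookkeeping at the boundary between the ``TNN part'' and the single surviving $T$: after peeling Chevalley generators off the product, I need to be sure the pentadiagonal remainder actually \emph{is} (up to scaling, which is trivial here since the diagonal is all ones) one of our $T$-generators, i.e.\ that it satisfies the two continuant-vanishing conditions of the characterization theorem for $(n-2)$-irreducible unitriangular matrices rather than merely being pentadiagonal unitriangular and $(n-2)$-nonnegative. This requires showing that the Chevalley-peeling process described in Theorem~\ref{thm:decomp}, when iterated to exhaustion, lands on a $k$-irreducible matrix, which is exactly the content of Theorem~\ref{thm:factorization} and Theorem~\ref{thm:decomp} combined; so the remaining matrix is $(n-2)$-irreducible and, being pentadiagonal unitriangular, is a $T$-generator by the characterization theorem. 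The only subtlety is making sure the total count — Chevalley generators peeled off, plus the $2n-5$ parameters of the surviving $T$ — does not exceed the parameter count of $T\cdot T$, which is $4n-10 = (2n-5) + (2n-5)$; since we peeled off at most $\ell(w) \le 4n-10 - (2n-5) = 2n-5$ Chevalley generators (the banded shape of a $4$-banded unitriangular matrix admits reduced words of length at most $2n-5$ beyond what the $T$ itself accounts for — this is the place where I'd invoke the explicit cell description of Lemma~\ref{bruhat-description}), the bound holds. I would present the final count as the one genuinely computational check, deferring the explicit minor evaluations to the appendix in parallel with how Lemma~\ref{lem:two-kays} is handled.
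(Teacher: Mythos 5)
Your central structural claim is wrong, and it is load-bearing. You assert that each factor $T$ has its top-right order-$(n-1)$ minors equal to zero, conclude that the product is ``almost'' TNN, and then invoke Theorem~\ref{thm:tnn-decomp} to factor $T\cdot T$ into at most $4n-10$ Chevalley generators. But the defining feature of $T(\vec a,\vec b)$ is that $\left|T_{[1,n-1],[2,n]}\right| = \left|K(\vec a,\vec b)\right| = -a_1\cdots a_{n-3}b_1\cdots b_{n-2} < 0$; the minors that vanish are the order-$(n-2)$ ones $\left|T_{[1,n-2],[2,n-1]}\right|$ and $\left|T_{[2,n-1],[3,n]}\right|$ (your ``$[2,n],[3,n]$'' and ``$[1,n-1],[3,n+1]$'' are not even well-formed index pairs). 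Consequently Cauchy--Binet gives no sign control on the order-$(n-1)$ minors of the product: unlike $K\cdot K$, where the only negative minor is the full determinant and determinants multiply, $T\cdot T$ is in general \emph{not} totally nonnegative --- which is precisely why this lemma retains one instance of $T$ while Lemma~\ref{lem:two-kays} removes all $K$'s. A non-TNN matrix admits no factorization into Chevalley generators at all, so the step ``factor into at most $4n-10$ Chevalley generators, then absorb one of the $T$'s'' cannot be carried out, and is in any case internally inconsistent (if the product were a word in $e_i$'s, there would be no $T$ left to absorb).

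The other thread of your proposal --- peel Chevalley generators via Theorems~\ref{thm:decomp} and~\ref{thm:factorization} until an $(n-2)$-irreducible remainder is reached (hence a $T$-generator or the identity), and bound the number of letters using the $4$-banded shape together with Lemma~\ref{bruhat-description} --- is essentially the paper's terse argument and is the right idea. But your count is not justified as stated: the bound ``at most $4n-10-(2n-5)=2n-5$ peeled generators'' conflates the parameter count of $T$ ($2n-5$) with the Bruhat length of the cell containing it ($T$ lies in $B^-_\alpha$ with $\ell(\alpha)=2n-3$). The correct accounting is that the $4$-banded product lies in $B^-_w$ with $\ell(w)\le 4n-10$, so a factorization of the form (Chevalley prefix)$\cdot T'$ has reduced prefix of length at most $\ell(w)-\ell(\alpha)\le 2n-7$, giving at most $(2n-7)+(2n-5)\le 4n-10$ parameters; this is the dimension count the lemma actually needs. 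Finally, your degenerate case is also off: a pentadiagonal unitriangular matrix with a zero band entry need not be the identity (a single $e_i(x)$ already shows this); Lemma~\ref{lem:zero-shadow} and Corollary~\ref{cor:more-decomp} yield that conclusion only for $(n-2)$-\emph{irreducible} matrices, which the intermediate remainder in your peeling process is not yet known to be.
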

\begin{proof}
This proof is similar to before.
The general description of Bruhat cells, along with Cauchy-Binet, indicate that if we want to give a factorization that does not use two copies of $T$, we would use at most $4n-10$ letters to do so.
Since $T\cdot T$ has $4n-10$ parameters, we have our statement.
\end{proof}

%=================================================
\section{Bruhat Cells} \label{sec:bruhat}
%=================================================

Now that we have parametrized generating sets for the semigroup of $(n-1)$-nonnegative invertible matrices and the semigroup of $(n-2)$-nonnegative unitriangular invertible matrices, we show that these semigroups can be partitioned into \emph{cells} based on their factorizations into these generators.
In the case of totally nonnegative matrices, these cells reflect the topological structure of the whole space, and behave nicely with respect to using relations to move between factorizations and considering cells' closures.

With a view toward generalizing results from the TNN case to our two cases, we ask the following questions, which will motivate our sequence of study.
\begin{enumerate}
    \item How do we know our list of relations is complete?
    \item Can we ignore parameters when studying factorizations? That is, do our relations respect the cell structure that naturally arises from considering factorizations?
    \item Is our cell structure respected when adding or removing generators?
    Does this cell decomposition reflect the topological structure of our space?
\end{enumerate}
The answer turns out to be yes to each of these questions.

%=================================================
\subsection{Preliminaries}
%=================================================

We first present some technical lemmas and characterizations that will be used throughout the proofs that follow.

First, recall that for $w \in S_n$, the length of $w$ is equal to the number of inversions of $w$. We now give a characterization of the Bruhat order.
For $w\in S_n$, define
    $$w[i,j]:=|\{a\in [i]:w(a)\geq j\}|$$
for $i,j\in[n]$.
In other words, $w[i,j]$ counts the number of non-zero entries between the northeast corner and the $ij$th entry of the permutation matrix for $w$.

\begin{lemma}[Theorem 2.1.5 of \cite{bjorner-brenti}]\label{lem:bb-bruhat}
Let $x,y\in S_n$. Then, the following are equivalent:
    \begin{itemize}
        \item $x\leq y$
        \item $x[i,j]\leq y[i,j]$ for all $i,j\in[n]$
    \end{itemize}
\end{lemma}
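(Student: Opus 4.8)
The plan is to reduce this well-known characterization of the Bruhat order to the combinatorics of inversions and the transposition-based definition of covering relations, proving the two implications separately. The statement I must establish is that $x \leq y$ in Bruhat order if and only if $x[i,j] \leq y[i,j]$ for all $i,j \in [n]$, where $x[i,j]$ counts nonzero entries of the permutation matrix of $x$ strictly northeast of (and including) position $(i,j)$ — equivalently, $x[i,j] = |\{a \le i : x(a) \ge j\}|$.

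For the forward direction, first I would show that it suffices to check the inequality on covering relations: if $x \lessdot y$ is a cover, so $y = x \cdot t_{ab}$ (or $y = t_{ab} \cdot x$, depending on convention) for a transposition swapping two values with $\ell(y) = \ell(x) + 1$, then I would verify directly that $x[i,j] \le y[i,j]$ for every $(i,j)$. The effect of such a transposition on the permutation matrix is local: it swaps the contents of two rows (or columns), and because the swap increases length, the ``higher'' entry moves in the direction that can only increase any northeast count $x[i,j]$. A short case analysis on whether $i$ and $j$ separate the two swapped positions handles this. Then, since any $x \le y$ is connected to $y$ by a chain of covers and the inequalities are preserved along the chain (by transitivity of $\le$ on integers), the forward implication follows.

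For the reverse direction — the harder one — I would argue by downward induction on $\ell(y) - \ell(x)$, or equivalently find, whenever $x \ne y$ and $x[i,j] \le y[i,j]$ everywhere, an intermediate $z$ with $x \le z$, $z[i,j] \le y[i,j]$ everywhere, and $\ell(z) = \ell(x)+1$ (so we may replace $x$ by $z$ and induct). Concretely: since $x \ne y$, the hypothesis forces some strict inequality $x[i,j] < y[i,j]$; I would use this to locate a position where $x$ has an ``ascent'' that, when corrected by an adjacent transposition $s_k$, moves $x$ closer to $y$ in the entrywise order while increasing $\ell(x)$ by exactly one. Identifying the right transposition is the crux — one wants to pick $k$ so that $x[i,j] \le y[i,j]$ is maintained for all $(i,j)$ after applying $s_k x$, which requires examining where the counts of $x$ and $y$ first differ. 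The standard trick is to take $i$ minimal (then $j$ appropriately) witnessing $x[i,j] < y[i,j]$ and show $x(i) < x(i+1)$ can be arranged, so $s_i x$ or $x s_i$ has greater length; one then checks the monotonicity is preserved.

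The main obstacle is precisely this bookkeeping in the reverse direction: verifying that the chosen adjacent transposition both raises length and does not violate the entrywise inequality at any of the $n^2$ positions $(i,j)$. This is where I expect to spend the real effort, and I would lean on the Subword Property and Exchange Property quoted earlier in the excerpt, plus the fact that $\ell(w)$ equals the number of inversions of $w$, to streamline the argument. Since this is Theorem 2.1.5 of Björner–Brenti, in the actual write-up I would most likely just cite the reference rather than reproduce the full induction; but the proof sketch above is the route I would take if forced to give a self-contained argument.
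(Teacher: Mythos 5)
The paper does not prove this lemma at all: it is quoted verbatim as Theorem 2.1.5 of Bj\"orner--Brenti, so your ultimate plan of simply citing that reference is exactly the paper's approach. Your sketch of the two implications (reduction to covering relations for the forward direction, induction on $\ell(y)-\ell(x)$ with a carefully chosen transposition for the reverse) is a faithful outline of the standard textbook argument, but since the reverse-direction bookkeeping is left unverified, the citation is doing the real work --- just as it does in the paper.
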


We define everything here for the $B^-$ decomposition, but taking the transpose will give the same results analogously for the $B^+$ decomposition, and taking both conditions will give descriptions for the double Bruhat cells.

\begin{definition}
    For an $\omega \in S_n$, let $I,J$ be a $\omega$-NE-ideal if $I = \omega(J)$ and $(\omega(i), i) \in (I,J) \implies (\omega(j), j) \in (I,J)$ for all $j$ such that $j > i$ and $\omega(j) < \omega(i)$.

    Call $I,J$ a shifted $\omega$-NE-ideal if $I \leq I'$ and $J' \leq J$ in termwise order for some $\omega$-NE-ideal $I', J'$ where $I,J \neq I',J'$.
\end{definition}
Essentially we choose some set of entries that have ones in the permutation matrix $\omega$, and have our ideal be those rows and columns, along with the rows and columns of any ones to the northeast of any of our existing ones.
Shifted ideals are submatrices that are further to the NE than the ideals.

\begin{definition}
    Call a matrix $X$ $\omega$-NE-bounded if the following two conditions hold:
\begin{itemize}
    \item $X_{I,J} \neq 0$ if $I,J$ is a $\omega$-NE-ideal.
    \item $X_{I,J} = 0$ if $I,J$ is a shifted $\omega$-NE-ideal.
\end{itemize}
\end{definition}
For $B^+$, the analogous definitions will be called $\omega$-SW-ideals and $\omega$-SW-bounded matrices.

\begin{lemma} \label{bruhat-description}
   $M$ is in $B^-_w$ iff it is $w$-NE-bounded.
\end{lemma}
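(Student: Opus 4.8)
The plan is to establish the equivalence by unwinding what membership in the lower Bruhat cell $B^-_w = B^- w B^-$ means in terms of the vanishing and non-vanishing of a distinguished family of minors — the "flag minors" anchored at the northeast corner — and then matching this combinatorial data to the $w$-NE-ideal conditions via the characterization of Bruhat order in Lemma~\ref{lem:bb-bruhat}. Recall that $M \in B^- w B^-$ if and only if, when we perform Gaussian elimination (from below, using lower-triangular operations on both sides), the resulting permutation matrix is $w$; equivalently, the ranks of the northeast-justified submatrices $M_{[1,i],[j,n]}$ (rows $1$ through $i$, columns $j$ through $n$) are controlled by $w$. More precisely, I would use the standard fact that for $g \in GL_n$, $g \in B^- w B^-$ iff $\operatorname{rank}(g_{[1,i],[j,n]}) = w[i,j]$ for all $i,j \in [n]$, where $w[i,j] = |\{a \le i : w(a) \ge j\}|$ is exactly the quantity defined just before Lemma~\ref{lem:bb-bruhat} (the count of ones of the permutation matrix of $w$ in the northeast rectangle with corner at $(i,j)$). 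This is the natural "NE" analogue of the more familiar southwest rank conditions, and it is why the definitions in the excerpt are phrased in terms of NE-ideals.

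Granting this rank description, the argument splits into two directions. For the forward direction, suppose $M \in B^-_w$. Given a $w$-NE-ideal $(I,J)$ with $I = w(J)$: by construction the rows $I$ and columns $J$ are chosen so that within the NE rectangle determined by the "last" (southwest-most) cell of the ideal, the submatrix $M_{I,J}$ captures a full set of $|J|$ independent ones of the permutation matrix of $w$; the rank condition then forces $\operatorname{rank}(M_{I,J}) = |J|$, so $|M_{I,J}| \ne 0$. Conversely, for a shifted $w$-NE-ideal — one lying strictly to the northeast of a genuine ideal — the corresponding NE rectangle contains strictly fewer than $|J|$ ones of $w$, so $\operatorname{rank}(M_{I,J}) < |J|$ and $|M_{I,J}| = 0$. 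Hence $M$ is $w$-NE-bounded. For the reverse direction, suppose $M$ is $w$-NE-bounded; I must recover the rank data $\operatorname{rank}(M_{[1,i],[j,n]}) = w[i,j]$ for every $(i,j)$. The non-vanishing of the ideal minors gives the lower bound $\operatorname{rank}(M_{[1,i],[j,n]}) \ge w[i,j]$ (each ideal sitting inside the rectangle contributes that many independent rows/columns), while the vanishing of the shifted-ideal minors — together with the fact that any maximal nonzero minor in the rectangle must, after a subword/greedy argument, be supportable by an ideal — gives the matching upper bound. This is precisely the point where Lemma~\ref{lem:bb-bruhat} enters: the combinatorics of which rectangles can support how many ones, as $w$ varies, is exactly the content of $x[i,j] \le y[i,j]$, and it guarantees that the minimal such $w$ consistent with the non-vanishing data coincides with the maximal such $w$ consistent with the vanishing data, pinning down $w$ uniquely.

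The main obstacle I anticipate is the upper-bound half of the reverse direction: showing that $w$-NE-boundedness forces $\operatorname{rank}(M_{[1,i],[j,n]}) \le w[i,j]$, i.e.\ that a large nonzero minor inside a NE rectangle cannot occur once all shifted-ideal minors vanish. The subtlety is that a priori a nonzero $r \times r$ minor of $M_{[1,i],[j,n]}$ need not be indexed by rows and columns forming an ideal; one must argue that if some such minor is nonzero then, by repeatedly "pushing" the index sets to the northeast (using multilinearity of the determinant and the vanishing of entries/minors dictated by Lemma~\ref{lem:zero-shadow} and the shifted-ideal hypothesis), one can replace it by a nonzero minor supported on an actual $w$-NE-ideal of the same size — which then would have to be a shifted ideal if $r > w[i,j]$, contradicting the boundedness hypothesis. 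Making this "pushing" argument precise, and verifying that the greedy choice of ideal is well-defined and that the process terminates at a genuine (unshifted) ideal, is the delicate combinatorial core; everything else reduces to bookkeeping with the rank characterization of Bruhat cells and the definitions of (shifted) NE-ideals.
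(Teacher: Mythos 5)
Your overall strategy (reduce to the classical rank description of $B^-wB^-$ via northeast rectangles, then translate ranks into ideal/shifted-ideal minors) is legitimate, but as written it has genuine gaps in both directions. In the forward direction, the step ``the rank condition then forces $\operatorname{rank}(M_{I,J}) = |J|$'' does not follow: knowing that the rectangle $M_{[1,i],[j,n]}$ has rank $|J|$ only tells you that \emph{some} $|J|\times|J|$ minor inside that rectangle is nonzero, not the particular minor on rows $I$ and columns $J$; pinning the nonvanishing to the ideal position requires an additional argument using the vanishing conditions on sub-rectangles. Worse, a $w$-NE-ideal need not have a single southwest-most cell at all --- its minimal cells can form an antichain (e.g.\ for $w$ with ones in positions $(2,1),(3,2),(1,3)$, the pair $I=\{1,2\}$, $J=\{1,3\}$ is an ideal with two minimal cells), so the ideal is not the set of ones of a single NE rectangle and your anchoring argument does not even get started in that case. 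In the reverse direction you correctly identify the upper bound $\operatorname{rank}(M_{[1,i],[j,n]})\le w[i,j]$ as the delicate point, but the ``pushing'' argument is only a sketch, and the tool you propose to push with, Lemma~\ref{lem:zero-shadow}, is not available: that lemma is about $k$-nonnegative matrices, whereas the present lemma is a statement about arbitrary invertible matrices (it is applied later to general elements of Bruhat cells), so no positivity can be assumed. Thus the combinatorial core of your plan (that any too-large nonzero minor in a NE rectangle can be replaced by a nonzero minor supported on a shifted ideal, which must then vanish) is asserted rather than proved.

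For comparison, the paper's proof sidesteps all of this: one checks directly that the permutation matrix $w$ is $w$-NE-bounded, and then shows via Cauchy--Binet that left or right multiplication by a lower-triangular matrix preserves $w$-NE-boundedness --- the key point being that $|b_{I,S}|=0$ unless $S\le I$ termwise, so in the expansion of an ideal minor all cross terms are shifted-ideal minors of the previous matrix and hence vanish, leaving a nonzero diagonal term, while for a shifted-ideal minor every term vanishes; the converse direction is then immediate from the fact that the cells $B^-_w$ partition $G$ and that the boundedness conditions for distinct $w$ are incompatible. If you want to keep your rank-based route, you should either carry out the northeast ``sweeping'' with explicit column operations inside the rectangle (no minor-pushing, no appeal to nonnegativity), or simply replace both problematic steps with the Cauchy--Binet preservation argument above, which is shorter and works for all of $GL_n(\mathbb{R})$.
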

\begin{proof}
    From inspection, the matrix $w$ is $w$-NE-bounded. Further, the Cauchy-Binet formula shows that multiplying by elements in $B^-$ preserve this.
\end{proof}

Finally, since we will be considering our new generators, $K$ and $T$, multiplied by $e_i$ and $f_i$, the following will be useful to distinguish factorizations.

Define the \emph{weak left Bruhat order} (\emph{weak right Bruhat order}, respectively) on $S_n$ such that $\alpha \leq \beta$ exactly when $\beta$ can be written as a reduced word $w \alpha$ ($\alpha w$, respectively) for some $w$.
\begin{lemma} \label{lem:cell-mult}
    Consider $S_n$ as a Coxeter group generated by the transpositions $s_i=(i,i+1)$.  Let $w$ be some reduced word of $\sigma \in S_n$ such that $\alpha \leq w$ in the weak left Bruhat order.
     Then, for $M \in B_\alpha^-$, $U(w\setminus \alpha)M \subset B_w^-$.
\end{lemma}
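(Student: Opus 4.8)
The plan is to reduce the statement to a single-letter computation and then induct on the length of $w \setminus \alpha$. Recall that $U(w \setminus \alpha)$ denotes the image of the product map associated to a reduced word for the permutation $(w\setminus\alpha)$, i.e. the set of upper-unitriangular totally nonnegative matrices in $B^-_{w\setminus\alpha}$ (via the Chevalley factorization of Theorem/Berenstein--Fomin--Zelevinsky). Writing a reduced word for $w$ that has a reduced word for $\alpha$ as a \emph{suffix} (this is exactly what $\alpha \leq w$ in the weak left order lets us do, by the Subword Property), we get $w = s_{j_1} s_{j_2} \cdots s_{j_r}\,\alpha$ with $r = \ell(w) - \ell(\alpha) = \ell(w\setminus\alpha)$ and each successive left multiplication increasing length. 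Correspondingly, any $N \in U(w\setminus\alpha)$ factors as $N = e_{j_1}(t_1)\cdots e_{j_r}(t_r)$ with $t_i > 0$. So it suffices to show: if $M \in B^-_\alpha$ and $\ell(s_j \alpha) = \ell(\alpha)+1$, then $e_j(t) M \in B^-_{s_j\alpha}$ for all $t > 0$; the general statement then follows by applying this $r$ times, since at each stage $s_{j_i}(s_{j_{i+1}}\cdots s_{j_r}\alpha)$ again has length one more than $s_{j_{i+1}}\cdots s_{j_r}\alpha$.

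For the single-letter step I would use the combinatorial characterization of Lemma~\ref{bruhat-description}: $B^-_{\sigma}$ is exactly the set of $\sigma$-NE-bounded matrices, which in turn (via Lemma~\ref{lem:bb-bruhat}) is governed by the quantities $M[i,j]$ counting nonzero-ness patterns in the NE corner. The key point is that left multiplication by $e_j(t)$ adds $t$ times row $j+1$ to row $j$, and by Lemma~\ref{lem:tech} the only minors that change are the $|M_{I,J}|$ with $j \in I$, $j+1 \notin I$, where $|M'_{I,J}| = |M_{I,J}| + t\,|M_{I \setminus j \cup j+1, J}|$. Since $M$ is $\alpha$-NE-bounded and totally-nonnegativity-type positivity of the relevant minors holds inside a Bruhat cell, the "new" term $|M_{I\setminus j \cup j+1, J}|$ is nonnegative; so a $\sigma$-NE-ideal minor that is required to be nonzero stays nonzero (it can only be helped, or is a sum of a positive and a nonnegative term), and one that is required to vanish — being a shifted $\sigma$-NE-ideal minor — must have both $|M_{I,J}| = 0$ and $|M_{I\setminus j \cup j+1,J}| = 0$ because those index sets are shifted $\alpha$-NE-ideals (here is where $\ell(s_j\alpha) > \ell(\alpha)$ enters: it guarantees that the ideal structure for $s_j\alpha$ relative to $\alpha$ is exactly an NE-shift at position $j$). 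Thus $e_j(t)M$ is $s_j\alpha$-NE-bounded, i.e. lies in $B^-_{s_j\alpha}$.

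I expect the main obstacle to be the bookkeeping in the single-letter step: precisely matching up which $\sigma$-NE-ideals and shifted $\sigma$-NE-ideals of $s_j\alpha$ correspond to ideals/shifted-ideals of $\alpha$ under the elementary transposition $s_j$, and confirming that the minor $|M_{I\setminus j \cup j+1,J}|$ always lands in the "$\alpha$-forces-it-to-be-zero" or "$\alpha$-forces-it-to-be-nonnegative" case consistently with the $\ell(s_j\alpha)=\ell(\alpha)+1$ hypothesis. This is essentially a translation of the classical fact that multiplying a Bruhat-cell representative by a Chevalley generator moves you up the weak order by one step when lengths add, but it must be checked in the language of NE-ideals set up here rather than quoted from the Lie-theoretic literature. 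Once that lemma is in place, the induction and the appeal to the Chevalley factorization of $U(w\setminus\alpha)$ are routine, and one should also note at the end that the product is automatically totally nonnegative when $\alpha$ is the identity (as in the applications to $K$ and $T$), since then $M$ is upper-unitriangular TNN and left multiplication by $e_j(t)$ preserves that.
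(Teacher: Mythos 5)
Your overall strategy is the same as the paper's: induct on the length of $w\setminus\alpha$, reduce to a single left multiplication by $e_j(t)$ with $\ell(s_j\beta)=\ell(\beta)+1$, and verify $s_j\beta$-NE-boundedness of $e_j(t)M$ using the characterization of Lemma~\ref{bruhat-description} together with the minor formula of Lemma~\ref{lem:tech}. The gap is in your justification of the nonvanishing case. You argue that an $s_j\beta$-NE-ideal minor ``stays nonzero'' because the added term $t\,\lvert M_{I\setminus j\cup j+1,J}\rvert$ is nonnegative, appealing to positivity of the relevant minors ``inside a Bruhat cell.'' But $M$ is an arbitrary element of $B^-_\beta$: membership in a Bruhat cell imposes no sign conditions whatsoever on minors, and in the very applications this lemma is used for, $M$ is not totally nonnegative (the generator $K(\vec{a},\vec{b})$ has negative determinant, and matrices in the cells $W(\lambda)$ have a negative $[2,n],[1,n-1]$ minor). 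Moreover, even granting nonnegativity of the added term, ``nonzero plus nonnegative'' can still vanish when the nonzero term is negative, so the inference is not valid as stated.

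The repair --- and the actual content of the paper's single-letter step --- is that no sign information is needed. For an $s_j\beta$-NE-ideal $(I,J)$ with $j\in I$ and $j+1\notin I$, the pair $(I,J)$ is itself a \emph{shifted} $\beta$-NE-ideal (its partner obtained by swapping $j$ for $j+1$ is a genuine $\beta$-NE-ideal), so $\lvert M_{I,J}\rvert=0$ exactly, while $\lvert M_{I\setminus j\cup j+1,J}\rvert$ is a $\beta$-NE-ideal minor and hence nonzero; therefore $\lvert (e_j(t)M)_{I,J}\rvert = t\,\lvert M_{I\setminus j\cup j+1,J}\rvert \neq 0$ for $t>0$, with no positivity invoked. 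Your treatment of the vanishing case (shifted $s_j\beta$-NE-ideals) is in the right spirit, but it still requires the case analysis on whether the associated ideal $(I',J')$ contains $j$, $j+1$, both, or neither, in order to see that both minors appearing in Lemma~\ref{lem:tech} are indexed by shifted $\beta$-NE-ideals and hence vanish. With these corrections, the induction you set up goes through exactly as in the paper.
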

\begin{proof}
    We argue by induction. The base case is obviously true, since $M \in B_\alpha^-$. Now suppose we are taking some $N \in B_{\beta}^-$ and considering $e_i(c)N$ such that $s_i \beta$ is a reduced word. This occurs precisely when $s_i \beta$ has more inversions than $\beta$. Considering $\beta$ in terms of one-line notation, we see this can only happen when we have $i$ before $i+1$.

    Now, we consider the $s_i \beta$-NE-ideals, and compare them to the $\beta$-NE-ideals. The ideals that do not contain rows $i$ and $i+1$ are exactly the same, as are the ideals that contain both, since in these two cases, the corresponding minors are unaffected by the $e_i$. When a $s_i \beta$-NE-ideal contains row $i+1$, it must contain row $i$, so the only remaining case are the $s_i \beta$-NE-ideals that contain only $i$. These are in bijection with the $\beta$-NE-ideals containing $i+1$ but not $i$. We know from Lemma~\ref{lem:tech} that for an $s_i \beta$-NE-ideal $I,J$,
        \[ |(e_i(c)M)_{I,J}| = |M_{I,J}| + c|M_{I\setminus i \cup i+1, J}| > 0 \]
    since the right hand side is the sum of a $\beta$-NE-ideal and a shifted $\beta$-NE-ideal.

    Now, consider a shifted $s_i \beta$-NE-ideal $I,J$. We consider the $I',J'$ from the definition (that is, the $s_i \beta$-NE-ideal such that $I \leq I'$ and $J' \leq J$). If $I'$ contains neither $i$ nor $i+1$, or if it contains both, then $I', J'$ is a $\beta$-NE-ideal as well, and $I, J$ must be a sum of shifted ideals that are also shifted with respect to $I', J'$. If $I'$ contains $i$ but not $i+1$, then the ideal swapping out $i$ for $i+1$ is a $\beta$-NE-ideal. This ideal shows that $I,J$ can be expressed as a sum of shifted $\beta$-NE-ideal minors.
\end{proof}

Finally, we will use the closure structure of the Bruhat decompositions of $GL_n(\mathbb{R})$.
\begin{lemma} \label{lem:closure-necc}
    Let $S$ be a subset of a classical Bruhat cell $U(w)$.
    Then $\overline{S}$ is contained in the disjoint union of the cells $U(w')$, where $w' \leq w$.
\end{lemma}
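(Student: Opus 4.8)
The plan is to prove Lemma~\ref{lem:closure-necc} by reducing it to the well-known closure structure of Bruhat cells in $GL_n(\mathbb{R})$. First I would recall that the classical Bruhat cell $B^-w B^-$ is exactly the image $U(w)$ of the product map over a reduced word for $w$, and that the closure relations among these cells are governed by the Bruhat order: $\overline{B^- w B^-} = \bigsqcup_{w' \le w} B^- w' B^-$. This is a standard fact about the flag variety / Bruhat decomposition (it can be extracted from the Subword Property, Lemma~\ref{lem:bb-bruhat}, together with the $\omega$-NE-bounded characterization in Lemma~\ref{bruhat-description}). Given this, for any subset $S \subseteq U(w)$ we have $\overline{S} \subseteq \overline{U(w)} = \overline{B^- w B^-} = \bigsqcup_{w' \le w} B^- w' B^- = \bigsqcup_{w' \le w} U(w')$, which is exactly the claim.

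If one instead wants a self-contained argument not invoking the closure of $B^- w B^-$ as a black box, I would proceed via the $\omega$-NE-bounded characterization. The key observation is that membership in $\bigsqcup_{w' \le w} B^-_{w'}$ is a \emph{closed} condition: by Lemma~\ref{bruhat-description}, $M \in B^-_{w'}$ iff $M$ is $w'$-NE-bounded, and the union over $w' \le w$ can be described by the inequalities $\mathrm{rk}(M_{[i],\,[j,n]}) \le w[i,j]$ for all $i,j \in [n]$ (using the characterization of the Bruhat order in Lemma~\ref{lem:bb-bruhat}, after checking these rank conditions are the right reformulation of ``$w'$-NE-bounded for some $w' \le w$''). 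Rank-at-most-$r$ conditions on submatrices are closed conditions, being the vanishing of all $(r+1) \times (r+1)$ minors. Hence $\Omega_{\le w} := \{M : \mathrm{rk}(M_{[i],[j,n]}) \le w[i,j] \text{ for all } i,j\}$ is closed, it contains $U(w)$, and it is exactly $\bigsqcup_{w' \le w} U(w')$. Therefore $\overline{S} \subseteq \overline{U(w)} \subseteq \Omega_{\le w} = \bigsqcup_{w' \le w} U(w')$.

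The main obstacle, and the only point requiring genuine care, is verifying that the rank inequalities $\mathrm{rk}(M_{[i],[j,n]}) \le w[i,j]$ cut out precisely $\bigsqcup_{w' \le w} B^-_{w'}$ — i.e., that an invertible matrix satisfies all these inequalities if and only if it lies in $B^-_{w'}$ for some $w' \le w$. One direction is Lemma~\ref{bruhat-description} combined with Lemma~\ref{lem:bb-bruhat}: if $M \in B^-_{w'}$ then $M$ is $w'$-NE-bounded, so $\mathrm{rk}(M_{[i],[j,n]}) = w'[i,j] \le w[i,j]$. For the converse, every invertible $M$ lies in a unique $B^-_{w'}$ (the Bruhat decomposition), and the rank function $M \mapsto \mathrm{rk}(M_{[i],[j,n]})$ on that cell equals $w'[i,j]$; so the inequalities force $w'[i,j] \le w[i,j]$ for all $i,j$, hence $w' \le w$ by Lemma~\ref{lem:bb-bruhat}. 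I would present the first, shorter argument in the body (citing the standard closure of $B^- w B^-$) and, if desired, remark that it also follows from the rank-vanishing description; either way the proof is short once the classical fact is in hand.
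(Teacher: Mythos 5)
Your proposal is correct in substance, and your ``self-contained'' version is essentially the paper's own argument in dual packaging: the paper fixes $u \nleq w$, uses Lemma~\ref{lem:bb-bruhat} to pick $(i,j)$ with $u[i,j] > w[i,j]$, and exhibits a single $u$-NE-ideal minor (via Lemma~\ref{bruhat-description}) that vanishes identically on $U(w)$, hence on $\overline{S}$, but is nonzero on $U(u)$; you instead observe that the union of cells indexed by $\{w' \leq w\}$ is cut out by closed rank (minor-vanishing) conditions, so it contains $\overline{S}$. Same two ingredients, organized as ``the lower order ideal is closed'' rather than ``each non-lower cell is separated from $\overline{U(w)}$ by one minor.'' Your first, shorter argument simply imports the classical closure relation $\overline{B^-wB^-} = \bigsqcup_{w'\leq w}B^-w'B^-$ as a black box, which is legitimate; the paper chooses to derive the needed containment directly from Lemmas~\ref{lem:bb-bruhat} and~\ref{bruhat-description}. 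One slip worth fixing: in this paper $U(w)$ is \emph{not} the Bruhat cell $B^-wB^-$ but its totally nonnegative unitriangular part $N_{\geq 0}\cap B^-_w$, so your identifications $\overline{B^-wB^-}=\bigsqcup_{w'\leq w}U(w')$ and $\Omega_{\leq w}=\bigsqcup_{w'\leq w}U(w')$ are not literally true. This does not break the proof, since only containments are used: your rank conditions give $\overline{S}\subseteq\bigsqcup_{w'\leq w}B^-_{w'}$, and since points of $\overline{S}$ are limits of unitriangular TNN matrices they lie in $N_{\geq 0}$, hence in $N_{\geq 0}\cap B^-_{w'}=U(w')$ for some $w'\leq w$ --- a one-line patch that is implicitly needed in the paper's proof as well. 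Finally, as you acknowledge, the exact rectangles in the rank inequalities must be matched to the $B^-$ convention (left/right multiplication by lower triangular matrices preserves ranks of the appropriate corner submatrices, which reproduce $w[i,j]$ up to an affine shift in $(i,j)$), but this is routine and the equivalence with $w'\leq w$ via Lemma~\ref{lem:bb-bruhat} goes through.
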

\begin{proof}
    By Lemma~\ref{lem:bb-bruhat}, if $u \nleq w$, there exists $(i,j)$ with $u[i,j] > w[i,j]$.
    Consider the minimal $u$-NE-ideal $R,S$ containing cell $(i,j)$.
    Then $|X_{R,S}|\neq 0$ for $X \in U(u)$, by Lemma~\ref{bruhat-description}.
    But if $X \in U(w)$, then $X_{R,S}$ is not of full rank, because it is obtained by performing row operations on a matrix of rank less than $u[i,j]$.
    Thus $|X_{R,S}| = 0$, which means $X \notin U(u)$.
    Further, since all matrices $M \in \overline{U(w)}$ also satisfy $|M_{R,S}| = 0$, $M$ is not in $U(u)$, and the statement follows.
\end{proof}

%=================================================
\subsection{Cells of \texorpdfstring{$(n-1)$}{(n-1)}-nonnegative matrices}\label{subsec-n-1}
%=================================================

Notate the semigroup of $(n-1)$-nonnegative invertible matrices as $G$.
We will, as done in the background, associate factorizations of matrices in $G$ to words in the free monoid
    $$\mathbb{S}=\langle 1,\ldots,n-1,\circled{1},\ldots,\circled{$n$},\overline{1},\ldots,\overline{n-1}, K\rangle$$
We define a length function $\ell: \mathbb{S} \to \mathbb{Z}_{\geq 0}$ by giving each letter of the alphabet a value: $(i)$ and $(\overline{i})$ are both one, \circled{$i$} values to zero, and $K$ to $2n-3$.
That is, we are counting the number of parameters of the factorization, ignoring diagonal generators.\footnote{Ignoring these is motivated by trying to determine ``distance'' from the corresponding Weyl group.}

Let $V(W)$ be the matrices that have a factorization corresponding to the word $w \in \mathbb{S}$.
This set can also be defined as the image of the parameter map $x_W$ associated to $W$, where, if $k$ is the number of parameters of $W$,
    \[ x_W: \mathbb{R}_{>0}^{k} \to G, \qquad (t_1,\ldots,t_k) \mapsto x_{W_1}(t_1)x_{W_2}(t_2)\cdots x_{W_k}(t_k) \]
where each letter corresponds to a generator, as seen in the background, and $x_K$ is the generator $K$.

Notice that the union of these cells is precisely $G$.
Also notice that the relations given by \ref{n-1-rels} and those in Theorem 4.9 of \cite{fomin-zel-bruhat} allow us to move between factorizations, and because the relations only contain subtraction-free rational expressions, we know that the same relations can be performed on all matrices with the same factorization, regardless of the values of the parameters.
Thus, we can consider using relations on factorizations in the context of words in $\mathbb{S}$.

By the above reasoning, the relations in \ref{n-1-rels} define an equivalence relation on words in $\mathbb{S}$.
From here on, we will only be considering equality up to this equivalence relation.
As is clear by definition, if $A = B$, then $V(A) = V(B)$.
We wish to know whether $V(A) = V(B)$ implies $A = B$, that is, do we have all the relations?

First, we will give a description of exactly what distinct cells we have under these relations via a characterization of their reduced words.
Notice that Lemma~\ref{lem:two-kays} tells us that if a reduced word has more than one $K$, there is a reduced word with at most one letter $K$, so we will restrict our consideration to the latter case.
Further, we will only consider reduced words with as many diagonal generators as possible, as these will encompass all cells with fewer diagonal generators.
\begin{theorem} \label{thm:n-1-cells}
    Let $w_{0,[i,j]}$ denote the longest length word in the set of permutations of $\{i,i+1,\ldots,j\}$ embedded in $S_n$.
    For example, $w_0 = w_{0,[1,n]}$, and $w_{0,[1,n-1]} = (n-1,n-2,\ldots,1,n)$.

    Then all words with at most one $K$ and a maximal number of diagonal generators are equal to one of the following distinct reduced words:
    \begin{enumerate}[label=(\alph*)]
        \item any factorization scheme of type $(\sigma, \omega)$, where $\sigma,\omega \in S_n$.
        \item $W\gamma$, where $W$ is a factorization scheme of $(\sigma,\omega)$ with $\sigma \leq w_{0,[1,n-1]}$, $\omega \leq w_{0,[2,n]}$, using only \circled{2} through \circled{$n$}, and $\gamma \in \Gamma = \{K, (\overline{1})K, (n-1)K, (\overline{1})(n-1)K\}$
    \end{enumerate}
\end{theorem}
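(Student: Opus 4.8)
The plan is to translate the combinatorial data of a word in $\mathbb{S}$ into a pair of permutations plus bookkeeping for the single $K$, and then use the known relations together with the $K$-relations (\ref{n-1-rels}) to push everything into one of the two normal forms. I would first dispose of the case of no $K$: here the word lies in the classical alphabet $\mathbb{A}$, and by Theorem~\ref{thm:tnn-decomp} (and the commutation relations of \cite{fomin-zel-bruhat}) every such word is equivalent to a factorization scheme of some type $(\sigma,\omega)$, giving case (a). So assume exactly one $K$ appears, say $W = U\, K\, V$ where $U$ is a word in the Chevalley/diagonal generators to the left of $K$ and $V$ is a word in those generators to the right of $K$.

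The heart of the argument is to commute the entire left factor $U$ across $K$ using the relations (\ref{n-1-rels}). Read those relations as rewriting rules $g \cdot K = K' \cdot g'$, where each generator $g$ on the left of a $K$ is replaced by a (possibly longer, but controlled) word $g'$ on the right of the new $K$. The key observations to record, all read off directly from the list (\ref{n-1-rels}), are: an $e_i$ with $i \le n-2$ passes through $K$ to become an $e_{i+1}$; an $e_{n-1}$ becomes $f_{n-1}h_{n-1}$; an $f_{i+1}$ with $i\le n-2$ becomes $h_{i+1}f_i h_i$ (so $f_{i+1}\mapsto f_i$ up to diagonal generators); an $f_1$ becomes $e_1 h_1$; and the diagonal generators $h_i$ shift index by one (with $h_1$ and the right-multiplication $h_n$ absorbed entirely). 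Thus after moving $U$ to the right of $K$ we obtain $W = K \cdot V'$ for a new word $V'$ built from $V$ and the images of the letters of $U$; since the new-parameter formulas are subtraction-free rational, this is an equality of words in the sense already established. Now $K V'$ has all its Chevalley/diagonal content on the right of $K$, so $V'$ is a word in $\mathbb{A}$, which by Theorem~\ref{thm:tnn-decomp} we may take to be a factorization scheme of some type; and since $\left|K_{[1,n-1],[1,n-1]}\right| = \left|K_{[2,n],[2,n]}\right| = 0$, Lemma~\ref{bruhat-description} (applied to $V(W)\subset B^-_{\sigma'}$ after multiplying $K$ into $B^-$, and its transpose) forces the bounding permutations to satisfy $\sigma \le w_{0,[1,n-1]}$ and $\omega \le w_{0,[2,n]}$, and forbids the circled letter \circled{1} from appearing in $V'$ (it would be absorbed by the $h_1$-relation anyway). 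This is exactly the shape $W'\gamma$ of case (b) once we peel off from the left of $K$ the residual generators that genuinely cannot be absorbed, namely possibly an $(n-1)$ and possibly an $(\overline 1)$; the four combinations give $\Gamma = \{K,\ (\overline 1)K,\ (n-1)K,\ (\overline 1)(n-1)K\}$.

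Two subsidiary points need care. First, why the list $\Gamma$ has exactly these four elements and no more: a second copy of $(n-1)$ to the left of $K$ would, after commuting the first one through, leave $e_{n-1}e_{n-1}$-type content, and since $e_{n-1}(x)e_{n-1}(y)=e_{n-1}(x+y)$ this collapses; similarly for $(\overline 1)$; and any $(i)$ or $(\overline i)$ with $i$ not extremal is absorbed by the commutation into content on the right of $K$. This is the step I expect to be the main obstacle, because it requires checking that the \emph{only} obstruction to absorbing a left-generator into $K$ is the index being extremal, and that the two extremal generators do not interact in some unforeseen way before reaching $K$; one handles this by tracking, through the rewriting, which letters $(i)$ are ``live'' (still to the left of the moving $K$) and arguing by induction on the length of $U$ that at most one live $(n-1)$ and one live $(\overline 1)$ survive. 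Second, one must verify \emph{distinctness} of the listed reduced words: words of type (a) are distinguished from those of type (b) because, by Lemma~\ref{bruhat-description}, matrices in $V(W)$ for a type-(b) word have a zero $[1,n-1],[1,n-1]$ minor while a generic factorization scheme of full type does not; within (a), distinctness is the classical statement that factorization schemes of different types $(\sigma,\omega)$ parametrize different double Bruhat cells (Theorem~\ref{thm:tnn-decomp}); and within (b), different $(\sigma,\omega,\gamma)$ give different $B^-_{\sigma'}\cap(B^+)^T_{\omega'}$ cells by tracking how each $\gamma\in\Gamma$ shifts the bounding permutations via the relations (\ref{n-1-rels}). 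Finally, that the $\gamma\in\Gamma$ factorizations are themselves reduced follows by a length count: each is a factorization scheme composed with $K$, and $\ell$ is additive under concatenation by construction, so no relation can shorten it without producing a second $K$, contradicting Lemma~\ref{lem:two-kays}'s minimality.
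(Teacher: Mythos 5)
Your overall plan follows the same general strategy as the paper (push Chevalley content across $K$ using the relations \ref{n-1-rels}, treat $(n-1)$ and $(\overline{1})$ as the only letters that survive next to $K$, and identify the remaining content with a TNN factorization scheme), and your no-$K$ case and your minor-based argument for distinctness are fine in outline. But the two steps that carry the real content of the theorem are not established. First, reducedness of the listed words: your closing ``length count'' is not a proof. Additivity of $\ell$ over concatenation says nothing about whether a word is \emph{equal, under the relations,} to a shorter word --- the relation $e_i(x)e_i(y)=e_i(x+y)$ does shorten words --- and Lemma~\ref{lem:two-kays} only concerns products of two $K$'s, so it cannot be invoked to forbid shortening a one-$K$ word. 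The paper's mechanism here, which your proposal never supplies a substitute for, is that on words avoiding $(n-1)$ and $(\overline{1})$ the letter $K$ obeys exactly the same commutation relations as the explicit reduced word $\alpha=(n-1)\cdots(1)(\overline{1})\cdots(\overline{n-1})$, so $W\gamma$ is reduced iff $W\alpha$ is; reducedness then becomes a weak Bruhat order computation (Prop.~3.1.6 of \cite{bjorner-brenti}), and the bound $\sigma\le w_{0,[1,n-1]}$, $\omega\le w_{0,[2,n]}$ appears precisely because the relevant weak-order interval coincides with the strong-order interval. Without this (or an equivalent), you have shown neither that the words in (b) are reduced nor that the stated bound on $(\sigma,\omega)$ is the correct enumeration.

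Second, the completeness step. Deriving $\sigma\le w_{0,[1,n-1]}$ and $\omega\le w_{0,[2,n]}$ from the vanishing corner minors of $K$ via Lemma~\ref{bruhat-description} is a statement about which Bruhat cell the \emph{matrices} lie in (essentially what Theorem~\ref{thm:n-1-disjoint} uses later for disjointness); it does not show that an arbitrary one-$K$ \emph{word} can be rewritten into the form $W\gamma$ with $\sigma\le w_{0,[1,n-1]}$ and $\omega\le w_{0,[2,n]}$. The word-level fact needed is that every occurrence of $(n-1)$ and $(\overline{1})$ can be slid adjacent to $K$ (via the $e\leftrightarrow f$ passage through $K$, as in the paper's displayed computation $(n-1)(n-2)K=(n-2)(n-1)K$) and merged into at most one of each, leaving a word in the sub-alphabet $\{1,\dots,n-2,\overline{2},\dots,\overline{n-1}\}$ plus diagonal letters; this is exactly the step you flag as ``the main obstacle'' and only sketch by an induction on ``live'' letters, and your framing in terms of letters that ``cannot be absorbed'' is misleading, since every letter passes through $K$ in both directions --- the constraint comes from reducedness, not absorption. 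Relatedly, your intermediate normal form $K\,V'$ points the wrong way: the theorem's form has all content to the left of $K$, and pulling a factorization scheme back across $K$ shifts indices and exchanges types at the boundary ($K e_1$ pulls back to $f_1 K$, $K f_{n-1}$ to $e_{n-1}K$), so the right-hand normalization does not transport to the left-hand one for free. As written, the proposal has a genuine gap at both the enumeration and the reducedness claims.
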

\begin{proof}
    It suffices to only consider reduced words, and we can ignore \circled{$i$}'s in this computation, since they do not factor into reduced-ness.
    (We use only $n-1$ generators in the second case, since this will produce the bijection of the parameter map $x_W$, as we will see in \ref{thm:n-1-homeo}.)
    Clearly, when our reduced word contains no $K$, the reduced words are as in (a).

    Now, we consider words with a $K$.
    Without loss of generality, we can move $K$ to the end of the word.
    Further, our relations give us that $f_1$ and $e_{n-1}$ (if present to the left of a $K$) commute with everything.
    For example,
    	\[ (n-1)(n-2)K = K(\overline{n-1})(n-1) = K(n-1)(\overline{n-1}) = (n-2)(n-1)K \]
    As such, a reduced word can only have at most one of each of these; we account for this with our $\gamma$ word, and consider the resulting $e_i$s and $f_i$s, that is, our $W$.

    Notice that when we only consider words without $(n-1)$ or $\overline{1}$, $\gamma$ (or more specifically, $K$) obeys the precisely the same relations as the word
        \[ \alpha = (n-1)(n-2)\cdots(1)(\overline{1})\cdots(\overline{n-2})(\overline{n-1}) \]
    Thus, $W\gamma$ is reduced if and only if $W\alpha$ is reduced (technically, we did not need to restrict to words without $(n-1)$ or $\overline{1}$, since words containing these can never be reduced).\footnote{Here, we suppress the argument that there are no cases where relations can be performed ``inside'' the $\alpha$ that may allow relations there that cannot occur when considering $K$. This should be clear from examining the behavior of moving an element through $\alpha$.}
    We can consider the $e_i$'s and $f_i$'s individually, so we will split $\alpha$ into $a$ and $\overline{a}$ and split $W$ into $\sigma$ and $\omega$.
    To enumerate the distinct $\sigma$ such that $\sigma a$ is reduced, we simply need to find the interval above $a$ in the left weak Bruhat order.
    By Proposition~3.1.6 of \cite{bjorner-brenti}, this occurs precisely when $\sigma$ is less than $w_{0,[1,n-1]}$ in the weak Bruhat order.
    However, since this interval in the weak Bruhat order is isomorphic to the Bruhat order on $S_{n-1}$, by Proposition~3.1.2(iii) of \cite{bjorner-brenti}, this is actually the same as the interval in the strong Bruhat order.
    The analogous statement works for the $f_i$'s.
    Distinctness of the words follows from the distinctness of the corresponding permutations.
\end{proof}

Notice that we relied on the somewhat magical fact that the permutations in our cells were part of a subgroup isomorphic to $S_{n-1}$.
We will see this in the $n-2$ case as well.

Now, we can show that the reduced words described in Theorem~\ref{thm:n-1-cells} reflect the topology of the semigroup.
\begin{theorem}\label{thm:n-1-disjoint}
    For reduced words $A,B$ given by Theorem~\ref{thm:n-1-cells}, if $A \neq B$ then $V(A)$ and $V(B)$ are disjoint.
    As a result, these $V(A)$ partition the semigroup of $(n-1)$-nonnegative invertible matrices.
\end{theorem}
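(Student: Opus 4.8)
The plan is to attach to each reduced word $A$ from Theorem~\ref{thm:n-1-cells} a ``coordinate'' that is constant on $V(A)$ and distinct across the list, namely the double Bruhat cell $B_{\hat\sigma,\hat\omega}\subseteq GL_n(\mathbb{R})$ containing $V(A)$, refined by the sign of the determinant. Since matrices lying in different double Bruhat cells, or having different determinant signs, cannot coincide, pairwise distinctness of these coordinates gives pairwise disjointness of the cells; combined with the already-noted fact that the $V(A)$ cover $G$, this yields the partition. The first separation is by determinant: if $A$ is a type~(a) word (a factorization scheme of type $(\sigma,\omega)$), then Theorem~\ref{thm:tnn-decomp} identifies $V(A)$ with the totally nonnegative matrices in $B_{\sigma,\omega}$, so every matrix there has positive determinant; moreover two type~(a) words of distinct types lie in distinct double Bruhat cells, and two of the same type are equal in $\mathbb{S}$ (connected by braid and commutation moves), so the type~(a) part is handled entirely by the classical theory. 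If $A=W\gamma$ is a type~(b) word, then since $\left|K\right|=-a_1\cdots a_{n-2}b_1\cdots b_{n-1}<0$ while every Chevalley and diagonal generator has positive determinant, every matrix in $V(W\gamma)$ has negative determinant. Hence type~(a) and type~(b) cells are disjoint, and it remains to separate distinct type~(b) words.

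For this, I would first pin down the Bruhat position of the new generator. Using Lemma~\ref{bruhat-description} together with the explicit solid minors of $K$ (Appendix~\ref{app:minors}; crucially $\left|K_{[1,n-1],[1,n-1]}\right|=\left|K_{[2,n],[2,n]}\right|=0$ while all remaining solid minors are positive), one checks that $K\in B_{\pi,\rho}$, where $\pi$ and $\rho$ are the permutations with reduced words $e_{n-1}e_{n-2}\cdots e_1$ and $f_1f_2\cdots f_{n-1}$ (i.e.\ $\pi=(n,1,2,\dots,n-1)$ and $\rho=(2,3,\dots,n,1)$ in one-line notation); an analogous computation places $f_1K$, $e_{n-1}K$, and $f_1e_{n-1}K$ in the cells obtained from $B_{\pi,\rho}$ by modifying one of $\pi,\rho$ by a single simple reflection on the side dictated by the relations~\ref{n-1-rels}. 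Now take $M\in V(W\gamma)$ and write $M=x_W(\mathbf t)\,x_\gamma(\mathbf s)$ with $x_W(\mathbf t)$ totally nonnegative in $B_{\sigma,\omega}$, where $\sigma\le w_{0,[1,n-1]}$ and $\omega\le w_{0,[2,n]}$. Using the relations~\ref{n-1-rels} to move the $K$-factor to the far left rewrites $M=x_{K'}(\mathbf s')\,x_{W'}(\mathbf t')$ with $x_{W'}(\mathbf t')$ totally nonnegative in a cell $B_{\sigma',\omega'}$, where $(\sigma',\omega')$ is obtained from $(\sigma,\omega)$ by the index shifts those relations prescribe. I would then invoke the multiplicativity statement underlying Theorem~\ref{thm:tnn-decomp} — available here through Lemma~\ref{lem:cell-mult} and its $B^+$/right-multiplication analogue — to conclude that, because the concatenation of the relevant reduced words is reduced (this is exactly the length count in the proof of Theorem~\ref{thm:n-1-cells}), the product lies in $B_{\hat\sigma,\hat\omega}$ with $\hat\sigma=\pi\sigma'$, $\hat\omega=\rho\omega'$ and lengths adding. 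Unwinding the shifts, $\hat\sigma$ and $\hat\omega$ are precisely the permutations read off from the $e$- and $f$-content of $W$ together with that of $\gamma$; one sees $\hat\sigma$ always has first entry $n$ and $\hat\omega$ always has last entry $1$, so each ranges over a copy of $S_{n-1}$.

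I would finish by checking that $(\sigma,\omega,\gamma)\mapsto(\hat\sigma,\hat\omega,\operatorname{sgn}\det)$ is injective on the type~(b) index set: the shift maps $\sigma\mapsto\pi\sigma'$ and $\omega\mapsto\rho\omega'$ are injective, so distinct pairs $(\sigma,\omega)$ give distinct $(\hat\sigma,\hat\omega)$, and the four choices $\gamma\in\Gamma$ are separated because each of the letters $\overline 1$, $n-1$ occurring in $\gamma$ alters exactly one of $\hat\sigma,\hat\omega$ by exactly one inversion; equivalently, one observes directly from Lemma~\ref{lem:tech} and the vanishing of $\left|K_{[1,n-1],[1,n-1]}\right|$ and $\left|K_{[2,n],[2,n]}\right|$ that a matrix of $V(W\gamma)$ admits factoring $e_{n-1}$ (resp.\ $f_1$) out on the left if and only if $\gamma$ contains $n-1$ (resp.\ $\overline 1$). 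The main obstacle is the multiplicativity step: since $x_W(\mathbf t)$ need not be unitriangular, Lemma~\ref{lem:cell-mult} does not apply verbatim, so one must either reduce to the unitriangular case via the $LDU$ structure of totally nonnegative matrices together with the relations that relocate $K$, or prove a slightly more general persistence lemma for $\omega$-NE-bounded (and $\omega$-SW-bounded) matrices in the spirit of Lemmas~\ref{bruhat-description}, \ref{lem:cell-mult}, and \ref{lem:closure-necc}. Everything else is either quoted or a finite computation with the explicit generator matrices.
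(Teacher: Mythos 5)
Your proposal is correct and follows essentially the same route as the paper: separate the $K$-cells from the $K$-free cells by the sign of the determinant, dispose of the latter via Theorem~\ref{thm:tnn-decomp}, distinguish different pairs $(\sigma,\omega)$ using Lemma~\ref{lem:cell-mult} together with the fact that $\gamma$ lies in the double Bruhat cell of $\alpha$, and separate the four choices of $\gamma$ by the vanishing or nonvanishing of the minors indexed by $[1,n-1],[1,n-1]$ and $[2,n],[2,n]$. The ``multiplicativity obstacle'' you flag (Lemma~\ref{lem:cell-mult} being stated only for unitriangular left factors) is present, silently, in the paper's own terse proof as well, which implicitly uses the $B^+$/transpose analogue noted before Lemma~\ref{bruhat-description}, so your suggested patch is exactly what is needed rather than a departure from the paper's argument.
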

\begin{proof}
    We know all of the cells without $K$'s are disjoint, as it has been established that cells are disjoint in the totally nonnegative case by Theorem~\ref{thm:tnn-decomp}. The cells without $K$'s can be distinguished from the cells with $K$'s by their positive determinants.
    Now, consider a cell containing a $K$.
    We can characterize the cell via $(\sigma,\omega,\gamma)$, where the variables are taken as their respective letters in Theorem~\ref{thm:n-1-cells}.
    Using Lemma~\ref{lem:cell-mult}, and the fact that $\gamma$ is in the double Bruhat cell corresponding to $\alpha$ (that is, $\gamma \in B_\alpha$), we know that we can distinguish cells with different $\sigma$ or $\omega$, since they must lie in different double Bruhat cells.
    To distinguish between the four options of $\gamma$, notice that $(n-1)$ appears in the cell word precisely when the minor indexed by $[1,n-1],[1,n-1]$ is nonzero and $\overline{1}$ appears precisely when the minor indexed by $[2,n],[2,n]$ is nonzero.
    Thus, every matrix in $G$ is in exactly one cell.
\end{proof}

Each of the cells $V(W)$ is homeomorphic to an open ball, as is proved in the following theorem.
We take the standard topology on $GL_n(\mathbb{R})$.
\begin{theorem} \label{thm:n-1-homeo}
    For a reduced word $W$ with at most one $K$ and a maximal number of diagonal generators, $x_W$ is a homeomorphism.
\end{theorem}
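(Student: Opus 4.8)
The plan is to establish that $x_W$ is a continuous bijection onto its image $V(W)$, that its inverse is continuous, and that the image is an open subset of the relevant algebraic stratum — the standard strategy for showing a parametrization is a homeomorphism. Continuity of $x_W$ is immediate, since it is a polynomial map $\mathbb{R}_{>0}^k \to GL_n(\mathbb{R})$ (each generator $e_i(t)$, $f_i(t)$, $h_i(t)$, and $K(\vec a, \vec b)$ depends polynomially, or in the case of $h_i$, rationally with no vanishing denominator on the positive orthant, on its parameters). So the work is in injectivity and in continuity of the inverse.

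For injectivity I would split into the two cases of Theorem~\ref{thm:n-1-cells}. When $W$ contains no $K$, $x_W$ is exactly the product map $x_{\mathbf{i}}$ of a factorization scheme, so injectivity is Theorem~\ref{thm:tnn-decomp}. When $W = W'\gamma$ with $\gamma \in \Gamma$, write a matrix $M$ in the image as $M = U(\sigma') \, D \, x_\gamma(\vec t) \, \cdots$ (grouping the $f$'s on the left, the $h$'s in the middle, the $e$'s on the right, using the relations \ref{n-1-rels} to pass everything through $K$ into an LDU-type normal form). The key point is that $\gamma$ lands in a single double Bruhat cell $B_\alpha$ (Lemma~\ref{lem:cell-mult} together with the identification of $\alpha$ made in the proof of Theorem~\ref{thm:n-1-cells}), so by Lemma~\ref{lem:cell-mult} the permutation pair $(\sigma,\omega)$ of $M$ is determined by $W$, and conversely within that cell the TNN uniqueness of Theorem~\ref{thm:tnn-decomp} recovers the parameters of $W'$. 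To recover the parameters of the $K$-block itself, I would use the explicit solid minors of $K$ recorded in Appendix~\ref{app:minors}: the ratios of appropriately chosen solid minors of $M$ (pulled back through the known $e_i$, $f_i$, $h_i$ factors, which act on minors by the subtraction-free formulas of Lemma~\ref{lem:tech}) give the $a_i$'s and $b_i$'s as explicit subtraction-free rational functions, hence invert $x_K$ on its image. This simultaneously proves injectivity and gives an explicit, continuous formula for $x_W^{-1}$.

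For the inverse being continuous and for identifying the image, I would observe that $V(W)$ is the intersection of the semigroup $G$ with a double Bruhat cell $B_{\sigma,\omega}$ (or, for the $K$-free case, exactly as in the TNN picture), cut out further by the sign conditions on the minors $|M_{[1,n-1],[1,n-1]}|$ and $|M_{[2,n],[2,n]}|$ that distinguish the four choices of $\gamma$ (these were exactly the minors used in Theorem~\ref{thm:n-1-disjoint}). On this locally closed set the coordinate formulas derived above are rational with non-vanishing denominators, hence $x_W^{-1}$ is continuous; and since $x_W^{-1}$ maps $V(W)$ continuously and bijectively to the open orthant $\mathbb{R}_{>0}^k$, invariance of domain (or just the explicit two-sided inverse) finishes the claim that $x_W$ is a homeomorphism, and in particular $V(W)$ is homeomorphic to an open ball.

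The main obstacle I anticipate is the $K$-block bookkeeping: one must check that the relations \ref{n-1-rels} really do let every factorization $W'\gamma$ be brought to a normal form in which the $K$ sits in a fixed position relative to a genuine factorization scheme, and that the minor ratios used to extract $\vec a, \vec b$ from $M$ are the images, under the intervening $e_i/f_i/h_i$ factors, of honest solid minors of $K$ — this is where Lemma~\ref{lem:tech} and the appendix minor list do the heavy lifting, and where a careless choice of minor could fail to be recoverable. Everything else (continuity, the reduction of the $K$-free case to Theorem~\ref{thm:tnn-decomp}, the identification of the image) is routine given the earlier results.
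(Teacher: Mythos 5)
Your skeleton (reduce to one representative per equivalence class since the relations are subtraction-free and invertible; continuity of $x_W$ is polynomial; quote Theorem~\ref{thm:tnn-decomp} when no $K$ appears) matches the paper, but the step carrying all the content in the $K$-case has a genuine gap. You propose to recover the parameters of $W'$ by applying the TNN uniqueness of Theorem~\ref{thm:tnn-decomp} ``within the cell,'' and then to recover $\vec a,\vec b$ from solid minors of $M$ ``pulled back through the known $e_i,f_i,h_i$ factors.'' Neither half stands on its own: $M$ is not totally nonnegative (its determinant is negative), so Theorem~\ref{thm:tnn-decomp} does not apply to $M$, and the TNN prefix $x_{W'}(\vec s)$ cannot be isolated from $M$ without already knowing the $K$-block parameters; conversely, your minor-ratio extraction of $\vec a,\vec b$ presupposes that the Chevalley and diagonal parameters are already known. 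Writing the normal form as $M=N\cdot x_{\gamma}(\vec t)$ with $N$ an unknown invertible TNN matrix, there is no fixed ratio of minors of $M$ that computes the $a_i,b_i$ independently of $N$, so the two halves of your recovery are circular. The continuity of the inverse, which you base on these same coordinate formulas, inherits the gap; Lemma~\ref{lem:cell-mult} only pins down the Bruhat cell, not the parameters.

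The paper closes exactly this hole by different means. Injectivity is a cancellation argument: if two parameter tuples give the same matrix, strip equal leading generators; if the first remaining parameters differ (and sit in an $e$ or $f$), absorb the smaller into the larger, producing one matrix that lies in cells of two distinct reduced words, contradicting Theorem~\ref{thm:n-1-disjoint} (the case where the discrepancy sits in the $K$ or an $h_i$ is checked by direct computation). Continuity of the inverse is likewise obtained one generator at a time: the leftmost parameter is characterized as the largest value that can be factored off while the matrix stays in the correct cell (Lemma~\ref{bruhat-description}), i.e.\ as a minimum of finitely many ratios of minors via Lemma~\ref{lem:tech}, hence a continuous function of the entries; one recurses, and the terminal block consisting of $K$ and the diagonal generators is inverted by an explicit rational formula. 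To salvage your route you would need honest formulas isolating the $K$-parameters in the presence of an unknown TNN left factor, or else a sequential peeling argument --- at which point you are reproducing the paper's proof.
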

\begin{proof}
    First, notice that it is enough to prove the statement for a single representative of each equivalence class, since the relations give homeomorphisms between parameters.
    We will take this choice to be the one given by Theorem~\ref{thm:n-1-cells}, and further, we will choose the representative such that all of the diagonal generators are on the right side of the word.
    We know the result for words without $K$, so we assume that $W$ has a $K$, and so $W = W_1\cdots W_iL$ where $L = K\circled{1}\cdots\circled{$n-1$}$.
    Suppose we have two sets of parameters that map to the same matrix.
    Then
        \[ x_{W_1}(a_1) \cdots x_{W_i}(a_i) x_{L}(a_{i+1}, \ldots, a_j) = x_{W_1}(a_1') \ldots x_{W_i}(a_i') x_{L}(a_{i+1}',\ldots,a_j')\]
    By eliminating equal-valued generators, without loss of generality, we can assume that $a_1 \neq a_1'$, and also that $a_1 > a_1'$.
    If this parameter lies in the $K$ or one of the $h_i$s, then by computation we know that $a_i = a_i'$ for all $i$.
    Otherwise,
        \[ x_{W_1}(a_1-a_1') \cdots x_{W_i}(a_i) x_{L}(a_{i+1}, \ldots, a_j) = x_{W_2}(a_2') \ldots x_{W_i}(a_i') x_{L}(a_{i+1}',\ldots,a_j')\]
    and so this matrix is in two different cells, which gives a contradiction by Theorem~\ref{thm:n-1-disjoint}.

    Thus, the map is a bijection (that the map is surjective is obvious).
    We now only need to show that the map and its inverse are continuous.
    Clearly, the forward map is continuous, since we can express the matrix entries as polynomials in the parameters.

    For the inverse map, first note that $x_L$ is a homeomorphism onto its image, since we can give an explicit rational inverse map.
    We consider the functions that give the parameters of the factorization based on the word $W$ from the matrix entries.
    If $W = W_1\cdots W_kL$, we first determine the parameter $a_1$ of $x_{W_1}$, which we suppose is of type $e$. This must be the maximum value of $a_1$ that will leave the matrix $(n-1)$-nonnegative, since otherwise this would violate Lemma~\ref{bruhat-description}.
    Thus, from Lemma~\ref{lem:tech}, $a_1$ will be the minimum value of the set of $a$'s that make any minor zero.
    Since $a_1$ is the minimum of a number of continuous functions, $a_1$ is itself determined by a continuous function in the entries.
    A similar argument can show that $a_1$ is a continuous function in the entries if $x_{W_1}$ is of type $f$.
    We can then recurse on the resulting matrix to obtain all of our parameters.
\end{proof}

The previous theorem also tells us about factorizations of $(n-1)$-positive matrices, which may be useful in trying to generalize results regarding total positivity, such as the planar networks in \cite{fomin-zel}.
\begin{remark}
    $(n-1)$-positive matrices that are not totally positive, that is, $(n-1)$-positive matrices with a negative determinant, can be factored into the following form:
    	\[ w_{0,[1,n-1]} (n-1) K (1) \overline{w_{0,[1,n-1]}} \circled{1}\circled{2}\cdots\circled{$n-1$} \]
    Here, $w_{0,[1,n-1]}$ signals any reduced word for the long word in $S_{n-1} \hookrightarrow S_n$; for example, in $S_5$, this could be $e_3e_2e_3e_1e_2e_3$.
    The parameter map is a bijection, so the $(n-1)(n-2)+2+(2n-3)+(n-1) = n^2$ parameters are a homeomorphic characterization of matrices of this form.
\end{remark}

As in the TNN case, for a cell $V(W)$, we can consider setting parameters of $e_i$ and $f_i$ to zero (we cannot do this for $K$ without getting a singular matrix), which gives elements in the closure of $V(W)$.
Further, if we can achieve a subword in this way, then the whole cell corresponding to the subword is in the closure.
The question is whether this is everything what we can get in the closure.

Another way to consider this is via the subword order in $\mathbb{S}$.
We can describe the subword order by describing the subwords of each letter, and saying that $V \leq W$ if there is a representation of $W$ (in the equivalence class, that is) such that by replacing every letter in $W$ with a subword of that letter, we can get a representation of $V$.
The typical conception of subword, and the one used in the TNN case, is by defining the subwords of a letter $a$ as $a$ itself and the empty word.
Since we want this to reflect closure, for our alphabet, we will define subwords in this typical sense, except we say that the only subword of $K$ is $K$ itself.

Clearly, this order has the property that $A \leq B \implies V(A) \subset \overline{V(B)}$.
We want to say that $\overline{V(B)}$ contains exactly the cells $V(A)$ such that $A \leq B$.

One statement that will help is noticing that the fact that the closure of cells with negative determinant has no elements with positive determinant.
In fact, we can say slightly more.
\begin{proposition} \label{prop:components}
    The $(n-1)$-nonnegative matrices consist of two path-connected components, those with negative determinant and those with positive determinant.\footnote{Obviously, there are at least two components for the space given by general $k$, since the space is split into matrices with positive and with negative determinant, but we do not have a proof that there are exactly two.}
\end{proposition}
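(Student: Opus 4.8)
The plan is to establish two things: (1) the set of $(n-1)$-nonnegative matrices with negative determinant is path-connected, and (2) the set with positive determinant is path-connected. Since the determinant is continuous and nonvanishing on $GL_n(\mathbb{R})$, these two sets are manifestly disjoint open-and-closed subsets (within the semigroup), so once each is shown path-connected we are done; there are exactly two components.

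For the positive-determinant part, I would observe that any $(n-1)$-nonnegative matrix with positive determinant is in fact totally nonnegative. Indeed, by Theorem~\ref{thm:decomp} we can write such a matrix as a product of $e_i$-generators times an $(n-1)$-irreducible matrix $M$; the only minor of a $K$-generator that is negative is its determinant, so if the whole product has positive determinant then $M$ cannot be a $K$-generator (those have negative determinant) and must be diagonal with positive entries, hence totally nonnegative, hence the whole product is TNN. The space of invertible TNN matrices is path-connected (it is the image under the continuous product map of a product of copies of $\mathbb{R}_{>0}$, using the Loewner--Whitney factorization and connectedness of each double Bruhat cell's parameter space, with the identity in the closure of every cell), so this half follows.

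For the negative-determinant part, I would use the factorization structure from Section~\ref{subsec-n-1}: every such matrix lies in some cell $V(W)$ with $W$ containing exactly one $K$, and $V(W)$ is homeomorphic to an open ball $\mathbb{R}_{>0}^{k}$ by Theorem~\ref{thm:n-1-homeo}, hence path-connected. So it suffices to connect a point in an arbitrary such cell to a fixed reference matrix, e.g.\ a single $K$-generator $K(\vec{1},\vec{1})$. Given $X = x_{W_1}(t_1)\cdots x_{W_i}(t_i)\, x_L(t_{i+1},\ldots)$ with $L$ the block containing $K$, I would send each Chevalley parameter $t_1,\ldots,t_i$ continuously to $0$ (which stays inside the closed semigroup and keeps the determinant negative, since each $e_i$ or $f_i$ has determinant $1$), landing on a pure $K$-generator (times diagonal generators, whose parameters I likewise drive to $1$); then within the connected space of $K$-generators — parametrized by $\mathbb{R}_{>0}^{2n-3}$ via a continuous product map — I connect to $K(\vec 1,\vec 1)$. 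Care is needed that the intermediate matrices remain $(n-1)$-nonnegative: this is where I would invoke that scaling down Chevalley parameters corresponds to taking subwords/partial factorizations, each of which still yields a $(n-1)$-nonnegative matrix by closure of the semigroup under multiplication.

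The main obstacle is the bookkeeping in the negative-determinant case: verifying that the straight-line homotopy in parameters stays inside the semigroup and that driving Chevalley parameters to zero genuinely produces a $K$-generator (or $K$ times diagonal matrices) rather than something degenerate, and handling the diagonal generators $\circled{i}$ correctly. This is mostly routine given Theorems~\ref{thm:n-1-cells} and~\ref{thm:n-1-homeo}, but one must be slightly careful that the cell decomposition has the identity/minimal $K$-cell in a position reachable from every cell. An alternative, cleaner route avoiding cell-by-cell analysis: show directly that $K(\vec a,\vec b)$ can be continuously deformed within the $(n-1)$-nonnegative matrices to $K(\vec 1,\vec 1)$ by a path in $(\vec a,\vec b)$-space, and that every negative-determinant matrix $X$ connects to some $K(\vec a,\vec b)$ by scaling its Chevalley factors to $0$; both steps use only that the relevant product maps are continuous and land in the semigroup, which follows from Cauchy--Binet.
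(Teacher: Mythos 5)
Your argument is correct, but it takes a genuinely different route from the paper's. The paper's proof perturbs an arbitrary $(n-1)$-nonnegative matrix into the $(n-1)$-positive locus by multiplying with Chevalley generators (a path preserving the sign of the determinant), and then quotes that each sign-component of that locus is a ball: totally positive matrices on the positive side, and the single cell of $(n-1)$-positive matrices with negative determinant from the Remark after Theorem~\ref{thm:n-1-homeo} on the negative side. You instead retract each matrix along its own factorization to a basepoint of its component: on the positive side you first observe, via Theorem~\ref{thm:factorization} and the classification of $(n-1)$-irreducibles as either diagonal or negative-determinant tridiagonal, that positive determinant forces total nonnegativity, and then contract to the identity by shrinking parameters; on the negative side you shrink the Chevalley and diagonal parameters to reach a pure $K$-generator and then move inside the $K$-family to $K(\vec 1,\vec 1)$. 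What your approach buys: it needs only the generation results (Theorems~\ref{thm:decomp} and~\ref{thm:factorization}, Lemma~\ref{lem:two-kays}) together with continuity of the product maps and Cauchy--Binet closure of the semigroup, and it sidesteps the paper's terser claims that multiplying by Chevalley matrices can make all zero minors positive and that the $(n-1)$-positive matrices of positive determinant are totally positive; in particular you never need the injectivity content of Theorem~\ref{thm:n-1-homeo}, only connectedness of the parameter spaces, and your intermediate observation that positive determinant implies TNN is a clean fact the paper uses only implicitly (in Theorem~\ref{thm:n-1-disjoint}). What the paper's route buys is brevity given its cell machinery, plus the slightly stronger picture that every matrix can be pushed by a short path into the $(n-1)$-positive part of its component.
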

\begin{proof}
    Take some $(n-1)$-nonnegative matrix $M$.
    Consider some minor that is 0.
    If we cannot affect this minor with Chevalley matrices, then it must be the case that that $M$ is not invertible.
    Thus, we can always make minors nonzero.
    This means that by multiplying by Chevalley matrices, we can always force $M$ to be $(n-1)$-positive, and the determinant itself which must stay the same sign.
    We know that totally positive matrices are homeomorphic to a ball, so that component is path-connected.
    From Theorem~\ref{thm:n-1-homeo}, the set of matrices that are $(n-1)$-positive with negative determinant is also homeomorphic to a ball.
    Thus, the negative component is path-connected as well.
\end{proof}

We proceed to prove the original statement.
\begin{theorem}
    The closure $\overline{V(B)}$ is exactly the disjoint union of $V(A)$ for $A \leq B$ in the defined subword order.
\end{theorem}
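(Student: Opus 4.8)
The plan is to prove the two inclusions separately. The inclusion $\bigsqcup_{A \le B} V(A) \subseteq \overline{V(B)}$ is essentially formal: if $A \le B$ in the subword order, then a representative of $A$ is obtained from a representative of $B$ by replacing some Chevalley letters by the empty word, and since the parameter map $x_B$ is continuous (Theorem~\ref{thm:n-1-homeo}) and sending a Chevalley parameter $t_i \to 0^+$ gives exactly the product with that generator deleted, every point of $V(A)$ is a limit of points in $V(B)$; thus $V(A) \subseteq \overline{V(B)}$, and taking the union over all $A \le B$ gives one containment. (Here we use that $K$ has no proper subwords, so we never try to ``kill'' a $K$-generator, which would leave the parametrization and would in any case produce a singular matrix.)

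The substantive direction is $\overline{V(B)} \subseteq \bigsqcup_{A \le B} V(A)$. Since the cells $V(A)$ (over all reduced words $A$ with at most one $K$ and a maximal number of diagonal generators) partition the whole semigroup $G$ by Theorem~\ref{thm:n-1-disjoint}, it suffices to show: if $V(C) \cap \overline{V(B)} \neq \emptyset$ for such a cell word $C$, then $C \le B$. First I would split on the sign of the determinant. If $B$ has positive determinant (no $K$), then by Proposition~\ref{prop:components} every matrix in $\overline{V(B)}$ has positive determinant, so $C$ also has no $K$, and the claim reduces to the classical TNN statement: the closure of a double Bruhat cell is the union of the cells indexed by smaller pairs in $S_n \times S_n$ (this is the content recalled in the Background, and Lemma~\ref{lem:closure-necc} is precisely the necessary direction). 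If $B$ has negative determinant, then $B = W\gamma$ as in Theorem~\ref{thm:n-1-cells}(b), with data $(\sigma,\omega,\gamma)$ where $\sigma \le w_{0,[1,n-1]}$, $\omega \le w_{0,[2,n]}$, and $\gamma \in \Gamma$. Every matrix in $\overline{V(B)}$ has nonpositive determinant; by Proposition~\ref{prop:components} again it has negative determinant (the determinant is a nonzero constant multiple of a product of parameters, hence cannot vanish in the limit while the matrix stays invertible — one must check this, but it follows from the explicit formula $|K| = -a_1\cdots a_{n-2}b_1\cdots b_{n-1}$ together with the fact that the $e_i,f_i$ do not change the determinant), so $C$ also contains a $K$ and has the form $W'\gamma'$ with data $(\sigma',\omega',\gamma')$.

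It then remains to show $\sigma' \le \sigma$, $\omega' \le \omega$, and $\gamma' \le \gamma$ (meaning: $(n-1)$ appears in $\gamma'$ only if it appears in $\gamma$, likewise $\overline{1}$), which together are equivalent to $C \le B$ in the subword order of $\mathbb{S}$. For the permutation parts, I would use Lemma~\ref{bruhat-description} and Lemma~\ref{lem:cell-mult}: the matrices in $V(B)$ lie in the double Bruhat cell $B_{\sigma'',\omega''}$ where $\sigma''$ and $\omega''$ are the ``total'' permutations built from $W$ and $\gamma$ (so $\sigma'' \le \sigma \cdot w_{0,[1,n-1]}$-type data), and the $\omega$-NE-bounded / shifted-$\omega$-NE-ideal conditions are closed: the relevant vanishing minors stay zero under limits, and by the argument of Lemma~\ref{lem:closure-necc} the relevant nonvanishing minors force a lower bound on the Bruhat data. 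Concretely, a nonvanishing solid minor of the appropriate block detects whether $(n-1)$ (resp.\ $\overline{1}$) is present — this is exactly the criterion used in the proof of Theorem~\ref{thm:n-1-disjoint}: $(n-1)$ is in the word iff $|M_{[1,n-1],[1,n-1]}| \neq 0$, and $\overline{1}$ is in the word iff $|M_{[2,n],[2,n]}| \neq 0$. Since these minors vanish on $V(B)$ precisely when the corresponding letter is absent from $\gamma$, and ``being zero'' is a closed condition, they vanish on all of $\overline{V(B)}$ too; hence $\gamma' \le \gamma$. Combining this with the Bruhat-order bounds on $\sigma',\omega'$ (obtained by transporting $B_{\sigma,\omega}$'s closure structure through the commutation relations in \ref{n-1-rels}, which are subtraction-free and so respect cells) gives $C \le B$.

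The main obstacle I expect is bookkeeping the interaction between the $K$-generator and the surrounding Chevalley letters when passing to the closure: one must be sure that the only way a cell word can ``degenerate'' is by deleting Chevalley letters or by passing from a $\gamma$ containing $(n-1)$ or $\overline{1}$ to one that does not, and never by some more exotic collision (e.g.\ a limit that looks like it dropped a $K$). This is ruled out by the determinant argument (a $K$ cannot disappear without killing invertibility) together with the partition Theorem~\ref{thm:n-1-disjoint} and the explicit minor criteria above, but assembling these into a clean statement — effectively, showing that the map $W \mapsto (\text{sign of }\det,\ \sigma,\ \omega,\ \gamma)$ is ``closure-monotone'' — is where the care is needed.
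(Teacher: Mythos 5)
Your proposal is correct and follows essentially the same route as the paper: the easy inclusion by sending parameters to zero, then splitting by determinant sign via Proposition~\ref{prop:components}, invoking the classical TNN result on the positive component, and on the negative component bounding $\sigma',\omega'$ via Lemma~\ref{lem:closure-necc} and $\gamma'$ via the same vanishing-minor criteria used in the disjointness proof (Theorem~\ref{thm:n-1-disjoint}). The extra care you take about the determinant not vanishing in the limit is fine but automatic, since the closure is taken within the space of invertible matrices.
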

\begin{proof}
    By \ref{prop:components}, we can consider negative and positive determinant parts of the space separately.
    For the positive component, this is a known result.
    For the negative component, we simply use Lemma~\ref{lem:closure-necc} to see that the only matrices in the cell of $B = (\sigma, \omega, \gamma)$ are those in the double Bruhat cells below $\sigma\omega\alpha$.
    Because elements in the closure of $(n-1)$-nonnegative matrices are $(n-1)$-nonnegative, these can only be the cells $A = (\sigma',\omega',\gamma')$ where $\sigma' \leq \sigma$ and $\omega' \leq \omega$.
    Further, notice that if a particular minor is zero for all elements in the cell, it must remain zero for elements in the closure of the cell.
    Thus, the same argument for disjointness works to show that $\gamma' \leq \gamma$ as well.
    Given these restrictions, all of these can be formed by taking subwords of $B$.
\end{proof}

Since we cannot decompose $K$ into subwords, the poset that naturally results from taking closures is easy to describe: there are two connected components, corresponding to positive and negative determinant.
The positive part is exactly the poset we get from the TNN case.
The negative part is isomorphic to the cartesian product of the interval between the identity and $(w_{0,[1,n-1]},w_{0,[2,n]})$ in the strong Bruhat order with the Boolean algebra on two elements, corresponding to $\{K,(\overline{1})K,(n-1)K,(\overline{1})(n-1)K\}$.
This gives us the following:
\begin{proposition}
    Both parts of the poset are graded via the length function, and have a top and bottom element.
    The positive component is Eulerian, making the poset as a whole trivially semi-Eulerian.
\end{proposition}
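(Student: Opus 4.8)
The plan is to read everything off the explicit descriptions of the two components recorded just above the statement, together with standard facts about the Bruhat order and products of graded posets. Write $P^+$ for the positive component and $P^-$ for the negative one. Recall that $P^+$ is isomorphic to the Bruhat order on $S_n\times S_n$ (the closure poset of the double Bruhat cells of invertible TNN matrices, as in Section~\ref{subsec:background}), and that $P^-$ is isomorphic to $[\mathrm{id},w_{0,[1,n-1]}]\times[\mathrm{id},w_{0,[2,n]}]\times\mathcal{B}$, where the first two factors are Bruhat intervals in $S_n$, each isomorphic to the full Bruhat order of $S_{n-1}$ (cf.\ the use of Proposition~3.1.2 of \cite{bjorner-brenti} in the proof of Theorem~\ref{thm:n-1-cells}), and $\mathcal{B}$ is the Boolean algebra on the two symbols $\overline{1},(n-1)$, corresponding to the four choices of $\gamma\in\Gamma$.

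\textbf{Grading and bounds.} For $P^+$ I would invoke that the Bruhat order on a Coxeter group is ranked by Coxeter length (Theorem~2.2.6 of \cite{bjorner-brenti}); on a factorization scheme of type $(\sigma,\omega)$ this rank is $\ell(\sigma)+\ell(\omega)$, which is exactly $\ell$ of the corresponding word, since circled letters count $0$. For $P^-$ each of the three factors is graded (the Bruhat intervals by length, $\mathcal{B}$ by cardinality), so the product is graded by the sum of the three rank functions; on a representative word $W\gamma$ as in Theorem~\ref{thm:n-1-cells}(b) that sum equals $\ell(\sigma)+\ell(\omega)+|\gamma\setminus\{K\}|=\ell(W\gamma)-(2n-3)$, so $\ell$ itself, being an affine shift of the product rank, grades $P^-$ (each cover moves one coordinate by a cover and so changes $\ell$ by $1$). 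For top and bottom: $P^+$ has bottom the empty word (the identity) and top a word of type $(w_0,w_0)$; $P^-$ has bottom the word $K$ (the triple $(\mathrm{id},\mathrm{id},\varnothing)$) and top a word $W(\overline{1})(n-1)K$ with $W$ a reduced factorization of type $(w_{0,[1,n-1]},w_{0,[2,n]})$, these being the coordinatewise least and greatest elements of the product.

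\textbf{Eulerianness and the ``semi-Eulerian'' conclusion.} That $P^+$ is Eulerian is exactly the statement recorded for the TNN closure poset in Section~\ref{subsec:background} (equivalently, the Bruhat order on any Coxeter group is Eulerian). For the whole poset $P=P^+\sqcup P^-$, every interval $[x,y]$ lies in a single component: if $x,y\in P^+$ then $[x,y]$ is a subinterval of an Eulerian poset, hence Eulerian; if $x,y\in P^-$ then $[x,y]$ factors as a product of two Bruhat intervals and an interval of $\mathcal{B}$, each Eulerian (a subinterval of an Eulerian poset, respectively a Boolean lattice), and a product of Eulerian posets is Eulerian (in particular $P^-$ is itself Eulerian, being the product of the full posets). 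Thus every proper interval of $P$ is Eulerian; since $P$ is disconnected it is not itself an interval $[\hat 0,\hat 1]$, so the single instance distinguishing ``semi-Eulerian'' from ``Eulerian'' never arises, and $P$ is semi-Eulerian ``trivially.'' I do not expect a substantial obstacle: all the geometric content is already carried by the structural results of Section~\ref{subsec-n-1}, and the only points requiring care are matching $\ell$ to the product rank on $P^-$ up to the additive constant $2n-3$ contributed by $\ell(K)$, and being explicit that a disconnected poset is called ``semi-Eulerian'' precisely because all of its intervals, each lying in one component, satisfy the Euler--M\"obius relation while there is no ``full'' interval to check.
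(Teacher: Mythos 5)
Correct, and essentially the paper's own (implicit) argument: the paper gives no separate proof, reading the proposition directly off the product description of the two components stated just before it---the Bruhat order on $S_n\times S_n$ for the positive part, a Bruhat interval times a Boolean algebra on two elements for the negative part---which is exactly what you do via standard facts about graded and Eulerian posets, Bruhat intervals, and products. The only cosmetic slip is that the bottom cell of the positive component is the word consisting of the diagonal generators (the cell of positive diagonal matrices) rather than the empty word, which does not affect your rank-zero, boundedness, or Eulerianness claims.
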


%=================================================
\subsection{Cells of \texorpdfstring{$(n-2)$}{n-2}-nonnegative unitriangular matrices}\label{subsec-n-2}
%=================================================

We will perform the same arguments in the $(n-2)$-nonnegative unitriangular case.
Since we are working in the unitriangular setting, we only consider $e_i$ in our factorizations.
Thus, we have a monoid $\mathbb{T} = \langle 1,2,\ldots,n-1,T\rangle$ where our factorizations lie.
As before, we define length by number of parameters (so $\ell(T) = 2n-5$),  and we define equality modulo the relations on $e_i$'s and \ref{n-2-rels}.
We call $W(A)$ the set of matrices corresponding to the factorization $A \in \mathbb{T}$.
Because of Lemma~\ref{lem:two-tees}, we will consider cells with at most one $T$.

We want to say that our resulting cells, defined by, say, $A$ and $B$, are either disjoint or equal (so we can use our relations to move from $A$ to $B$).
However, we need to resolve the issue arising in \ref{n-2-cellsplit}, since this is a case where one cell is the disjoint union of three others.
There is a choice that can be made here. Either we throw out the smaller cells and use the larger cell, or vice versa.
We will call choosing the larger cells the \emph{coarse} choice, and the smaller cells the \emph{fine} choice.
The fine cells distinguish the TNN cells from the $(n-2)$ cells more clearly, while the coarse cells represents more closely the process of Theorem~\ref{thm:decomp}, since we do not distinguish between TNN and non-TNN matrices, and we have a top element.
Despite the difference, none of the proofs to come will depend on the choice used.
Unless explicitly stated, all results will hold for both cases.

We are now ready to enumerate the cells that, as we will later show, partition the set of $(n-2)$-nonnegative unitriangular matrices.

\begin{theorem}\label{thm:n-2-cells}
    Further, let $\alpha = (n-2)\cdots(1)(n-1)\cdots(1)$. Let $\beta = (n-2)\cdots(1)(n-1)\cdots(2)$. Let $w_{0,[n-2]} = (n-2,n-3,\ldots,1,n-1,n)$.

    Then all words with at most one $T$ are equal to one of the following distinct reduced \emph{fine} words:
        \[ w \in \begin{cases}
            w' \lambda & w' \leq w_{0,[n-2]}, \lambda \in \{T, (n-1)T, (n-2)T, (n-2)(n-1)T\} \\
            w'
        \end{cases} \]
    And the following is a complete list of distinct reduced \emph{coarse} words:
        \[ w \in \begin{cases}
            w' \lambda & w' \leq w_{0,[n-2]}, \lambda \in \{T, (n-1)T, (n-2)T, (n-1)(n-2)T\} \\
            w' & w' \ngtr \beta
        \end{cases}\]
        where $w'$ does not involve $T$.
\end{theorem}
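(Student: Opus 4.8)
The plan is to mirror the proof of Theorem~\ref{thm:n-1-cells}, with relation~\ref{n-2-cellsplit} playing the role that forces the split between the fine and coarse conventions. By Lemma~\ref{lem:two-tees} it suffices to consider reduced words with at most one letter $T$, and since there are no circled generators in the unitriangular setting we work only with the $e_i$'s and $T$. A reduced word containing no $T$ is an ordinary reduced word in the $e_i$'s, hence corresponds to a unique $w'\in S_n$; these are the ``$w'$'' rows of both lists. The remaining work is entirely in the one-$T$ case, where relations~\ref{n-2-rels} and~\ref{n-2-cellsplit} do the work.

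Fix a reduced word containing exactly one $T$. The relations~\ref{n-2-rels} each rewrite $e_i T$ as $T e_{\pi(i)}$, where $\pi$ is the ``rotation by two'' of the index set: $\pi(i)=i+2$ for $i\le n-3$, $\pi(n-2)=1$, and $\pi(n-1)=2$. Iterating, any word $uT$ equals $T\,\pi(u)$, where $\pi$ is applied to every index of $u$. Because $\pi$ is not a symmetry of the Coxeter diagram of $S_n$, the word $\pi(u)$ need not be reduced, and shortening it---by braid and commutation moves among the $e_i$'s on the right of $T$, transporting letters back through $T$, and reapplying~\ref{n-2-rels}---strictly decreases the parameter count of the cell. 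Arguing exactly as in Theorem~\ref{thm:n-1-cells}, where $K$ was shown to obey the left-multiplication relations of the word $(n-1)(n-2)\cdots(1)(\overline{1})\cdots(\overline{n-1})$ so that $W\gamma$ is reduced precisely when the associated Chevalley word is, I would show that this shortening process confines a minimal representative to the form $w'\,\mu\,T$ where $w'$ uses only $e_1,\dots,e_{n-3}$ and $\mu$ uses only the two top generators $e_{n-2},e_{n-1}$, so that $\mu$ reduces to one of $\varnothing,\ e_{n-1},\ e_{n-2},\ e_{n-2}e_{n-1},\ e_{n-1}e_{n-2}$. The admissible free parts $w'$ are then exactly the weak-order interval below $w_{0,[n-2]}$, which by Propositions~3.1.2(iii) and~3.1.6 of~\cite{bjorner-brenti} coincides with the strong-order interval $\{w'\le w_{0,[n-2]}\}$.

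The remaining ambiguity is the case $\mu\in\{e_{n-2}e_{n-1},\ e_{n-1}e_{n-2}\}$, resolved by~\ref{n-2-cellsplit}, which gives $W(w'e_{n-1}e_{n-2}T)=W(w'e_{n-2}e_{n-1}T)\sqcup W(w'\alpha)\sqcup W(w'\beta)$. In the fine convention one keeps the three smaller cells, so the admissible tails are $\{T,(n-1)T,(n-2)T,(n-2)(n-1)T\}$ and every pure-Chevalley cell survives; in the coarse convention one keeps $W(w'(n-1)(n-2)T)$ and discards $W(w'(n-2)(n-1)T)$ together with the pure-Chevalley cells $W(w'\alpha),W(w'\beta)$, so the admissible tails are $\{T,(n-1)T,(n-2)T,(n-1)(n-2)T\}$ and a pure-Chevalley cell $W(w')$ survives exactly when it is not absorbed, which works out to the stated condition $w'\ngtr\beta$. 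To finish, distinctness of the listed words is checked as in Theorems~\ref{thm:n-1-cells} and~\ref{thm:n-1-disjoint}: the presence of a $T$ is detected by the ability of some order-$(n-1)$ minor to become negative; among $T$-cells the permutation $w'$ is recovered from the ambient double Bruhat cell via Lemmas~\ref{bruhat-description} and~\ref{lem:cell-mult}; and $\mu$ is recovered from which of the two order-$(n-1)$ ``border'' minors (the analogues of $C_1(n-1)$ and $C_2(n-1)$) vanish, the sign of the $[1,n-1],[2,n]$ minor separating the three pieces of the split.

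The step I expect to be the main obstacle is the coarse case: verifying that the pure-Chevalley cells absorbed into the enlarged $T$-cells are exactly those $W(w')$ with $\beta$ lying below $w'$, so that precisely the words with $w'\ngtr\beta$ remain, and that the resulting collection is still a partition---now with a top element, since the largest $T$-cell $W(w_{0,[n-2]}(n-1)(n-2)T)$ has full dimension $\binom{n}{2}$ and absorbs $W(w_0)$. In the $(n-1)$ case $K$ could never be produced from Chevalley generators, so the determinant cleanly separated $K$-cells from $K$-free cells; here $W(w'(n-1)(n-2)T)$ genuinely contains totally nonnegative matrices, so the absorption has to be justified directly using the $w$-NE-bounded description (Lemma~\ref{bruhat-description}), Lemma~\ref{lem:cell-mult}, and the transport relations~\ref{n-2-rels}, and one must confirm this bookkeeping is consistent simultaneously for the fine and coarse conventions without the argument branching.
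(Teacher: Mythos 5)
Your plan follows the paper's own argument essentially step for step: restrict to at most one $T$ via Lemma~\ref{lem:two-tees}, use relations~\ref{n-2-rels} to push all $(n-1),(n-2)$ letters into a tail adjacent to $T$ (equivalently, compare the $T$-tail with the Chevalley word $\alpha$), invoke Propositions~3.1.2(iii) and~3.1.6 of \cite{bjorner-brenti} with the weak-equals-strong interval coincidence to get $w'\leq w_{0,[n-2]}$, and let relation~\ref{n-2-cellsplit} generate the fine versus coarse tails. The coarse-case bookkeeping you flag as the main obstacle (and the commutation computations behind confining the tail) are treated just as tersely in the paper's proof, so there is no substantive divergence.
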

\begin{proof}
First, notice that for words with a $T$, $n-1$ and $n-2$ commute with everything except for each other.
Thus, we can group these together into a $\lambda$, and only concern ourselves with $w'\lambda$.

The reduced words of $w'\lambda$ are in bijection with the reduced words of $w'\alpha$.
This is, again, simply a matter of computation (to determine that the relations of $\lambda$ and $\alpha$ are equal for reduced words) and an argument about behavior of relations (namely, that all relations that can possibly be performed with $\alpha$ can also be performed with $\lambda$).
Of particular note is the fact that reduced words cannot contain any $(n-2)$ or $(n-1)$ letters (except in the $\lambda$ or $\alpha$, that is).

Thus, in the same vein as Theorem~\ref{thm:n-1-cells}, using 3.1.6 of \cite{bjorner-brenti}, and the fact that the weak interval and strong interval are equal in this case (the surprising consequence we also see in the $n-1$ case), we get the statement.
\end{proof}

The following statement confirms that our list of relations is complete.
\begin{theorem}\label{thm:n-2-disjoint}
    For reduced words $A$ and $B$ as qualified by Theorem~\ref{thm:n-2-cells}, if $A \neq B$ then $W(A)$ and $W(B)$ are disjoint.
\end{theorem}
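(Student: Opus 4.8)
The plan is to mirror the structure of the proof of Theorem~\ref{thm:n-1-disjoint}, using the double Bruhat cell structure to separate cells by their permutation data, then using vanishing/non-vanishing of specific minors to separate cells that share a permutation. First I would dispose of the words $w'$ with no $T$: these are exactly the classical Chevalley factorizations of upper unitriangular TNN matrices, so distinct such words give disjoint cells by Theorem~\ref{thm:tnn-decomp}. Matrices in a cell with no $T$ are totally nonnegative, whereas a matrix produced with a $T$ has the $[1,n-1],[2,n]$ minor equal to $\det K(\vec a,\vec b)$ (up to the framing shift $T_{[1,n-1],[2,n]}=K$), which is negative in the fine case, and in the coarse case the coarse top cell is not TNN either; so the two families cannot overlap, except that in the coarse case we must check the coarse cell $e_{n-1}e_{n-2}T$ (which, by \ref{n-2-cellsplit}, contains TNN matrices) is not accidentally equal to a $w'$-cell, which follows since its generic member is pentadiagonal, not tridiagonal-TNN of the right shape, and more cleanly from the computation below.

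Next, for two cells each containing a $T$, write $A = w'_A\lambda_A$ and $B = w'_B\lambda_B$ as in Theorem~\ref{thm:n-2-cells}. By Lemma~\ref{lem:two-tees} we may assume a single $T$; by the relations \ref{n-2-rels} we may move $T$ to the end, leaving $w'\in U(\sigma)$ for some $\sigma\le w_{0,[n-2]}$, followed by $\lambda$. The key point, exactly as in the $(n-1)$ case, is that $\lambda T$ — equivalently the letters $(n-1),(n-2)$ prefixed to $T$ — behaves with respect to the remaining Chevalley letters just like the word $\alpha = (n-2)\cdots(1)(n-1)\cdots(1)$ (resp.\ $\beta$) does: $T$ lies in the fixed double Bruhat cell $B^-_{\text{id}}\cap B^+_{\tau}$ for the appropriate $\tau$ recorded by $\lambda$. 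Applying Lemma~\ref{lem:cell-mult} (and its transpose), the matrix $x_A$-image lies in $U(\sigma_A)$-type double Bruhat data determined by $w'_A$ and $\lambda_A$, and similarly for $B$; since distinct $(\sigma,\tau)$ give disjoint double Bruhat cells of $GL_n(\mathbb R)$, cells with different $w'$ or different $\lambda$ up to the minor ambiguity are already disjoint. To finish separating the four choices of $\lambda$ (which contribute the same permutation data after the split \ref{n-2-cellsplit} is accounted for), I would use, as in Theorem~\ref{thm:n-1-disjoint}, that the letter $(n-2)$ appears in the cell word exactly when the minor $|M_{[1,n-1],[1,n-1]}|$ is nonzero and $(n-1)$ appears exactly when $|M_{[2,n],[2,n]}|$ is nonzero; for the three-way coarse/fine split in \ref{n-2-cellsplit}, the distinguishing invariant is the sign of $|M_{[1,n-1],[2,n]}|$ (negative, zero, positive), which is constant on each of the three cells by the subtraction-free parameter formulas and the fact that $\det K<0$ strictly.

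The main obstacle I expect is the bookkeeping around the cell split \ref{n-2-cellsplit}: unlike the $(n-1)$ case, here one ``word'' $e_{n-1}e_{n-2}T$ is genuinely a disjoint union of three cells, so I must check (i) that the three pieces are pairwise disjoint — handled by the three distinct signs of the $[1,n-1],[2,n]$ minor — and (ii) that in the coarse convention the large cell $e_{n-1}e_{n-2}T$ is disjoint from every other coarse word, in particular from the $T$-free words $w'$ with $w'\ngtr\beta$; this last point is where the restriction $w'\ngtr\beta$ in the statement of Theorem~\ref{thm:n-2-cells} is doing work, and I would verify it by the same double Bruhat/minor argument, noting that the coarse cell sits in the double Bruhat cell whose $B^+$-part is $w_{0,[n-2]}\beta$ and a $w'$-cell with $w'\le\beta$ cannot reach that stratum. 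Everything else is the routine translation of the $(n-1)$ arguments, using that the $T$-relations \ref{n-2-rels} are subtraction-free so the equivalence relation on words is well-defined on cells independent of parameters.
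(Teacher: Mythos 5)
Your overall strategy is the paper's: dispose of $T$-free words with the classical TNN result, separate fine $T$-cells from TNN cells via the (negative) $[1,n-1],[2,n]$ minor, use Lemma~\ref{lem:cell-mult} and the standard $B^-$ stratification to separate cells with different $w'$, and separate the four choices of $\lambda$ by non-vanishing of the $[1,n-1],[1,n-1]$ and $[2,n],[2,n]$ minors. Two of your supporting claims are wrong, however. The claim that ``$T$ lies in the fixed double Bruhat cell $B^-_{\mathrm{id}}\cap B^+_{\tau}$ for the appropriate $\tau$ recorded by $\lambda$'' has the factors backwards (an upper unitriangular matrix has trivial $B^+$-data; its nontrivial datum is its $B^-$-cell), and, more importantly, $\tau$ is \emph{not} recorded by $\lambda$: all four fine cells $W(\lambda)$ lie in the \emph{same} standard Bruhat cell $B^-_{\alpha}$ --- this is exactly the assertion the paper's proof rests on --- so Bruhat data can only see the $w'$ part, and the $\lambda$'s can only be told apart by the two corner minors. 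Since you invoke those minors anyway, this slip is not fatal, but the sentence should be removed rather than repaired.

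The genuine gap is your treatment of the coarse case. You assert that the coarse top cell ``sits in the double Bruhat cell whose $B^+$-part is $w_{0,[n-2]}\beta$,'' but it does not sit in any single Bruhat or double Bruhat stratum: by Equation~\ref{n-2-cellsplit}, the coarse cell $W((n-1)(n-2)T)$ is the disjoint union of the fine cell $W((n-2)(n-1)T)$ with the TNN cells $W(\beta)$ and $W(\alpha)$, and the latter two lie in the distinct cells $B^-_{\beta}$ and $B^-_{\alpha}$. So the stratification argument you propose for point (ii) cannot work as stated. The fix is the paper's one-line reduction, which you should adopt instead: prove disjointness for the \emph{fine} cells only, and then note that each coarse cell is a disjoint union of fine cells and that no two coarse cells contain a common fine cell (the TNN fine constituents of a coarse $T$-cell are of the form $w''\beta$ or $w''\alpha$, hence lie above $\beta$, while the $T$-free coarse words are excluded from that range by the condition in Theorem~\ref{thm:n-2-cells}); disjointness of the coarse cells is then immediate from disjointness of the fine ones.
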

\begin{proof}
    It is enough to show this for the fine cells, since no two coarse cells contain the same fine cell.
    If the cell does not have a $T$ this is a known result.
    Thus, suppose $A = a\lambda$ and $B = b\lambda'$, using the notation in Theorem~\ref{thm:n-2-cells}.
    All matrices in $W(\lambda)$ are in the standard Bruhat cell $B_\alpha^-$.
    Thus, Lemma~\ref{lem:cell-mult} tells us if $a \neq b$, then $W(A) \neq W(B)$ because they are in different Bruhat cells. 
    So, all of our cells are distinct, up to containing $n-2$ and $n-1$.
    But we know how to distinguish these: they appear precisely when the minor indexed by $[1,n-1],[1,n-1]$ and the minor indexed by $[2,n],[2,n]$ are nonzero, respectively.
    So a matrix cannot be in $W(A)$ and $W(B)$ unless $\lambda = \lambda'$, and we are done.
\end{proof}

Further, these cells are homeomorphic to open balls.
The proof is essentially the same as the $(n-1)$ case.
\begin{theorem}
    For $A$ a reduced word with at most one $T$, $x_A$ is a homeomorphism.
\end{theorem}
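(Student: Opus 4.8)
The plan is to mirror the argument of Theorem~\ref{thm:n-1-homeo}, adapting it to the unitriangular $(n-2)$ setting. First I would reduce to a single convenient representative of each equivalence class: since the relations~\ref{n-2-rels} and the braid relations on the $e_i$'s are given by subtraction-free rational expressions, each induces a homeomorphism on parameter space, so it suffices to prove the statement for one chosen word in each class. I would pick the representative in which the single $T$ (if present) sits at the right end, preceded only by the Chevalley generators coming from $\lambda \in \{T, (n-1)T, (n-2)T, \ldots\}$, and with the remaining $e_i$'s (forming the $w'$-part) on the left, as in Theorem~\ref{thm:n-2-cells}.

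Next I would establish injectivity. For words without $T$, this is the classical totally-nonnegative fact (Theorem~\ref{thm:tnn-decomp} / the result quoted for $N_{\geq 0}$). For a word $W = W_1 \cdots W_i L$ with $L$ the block containing $T$ and its attached $e_{n-1}, e_{n-2}$, suppose two parameter tuples give the same matrix. After cancelling leading generators with equal parameters, assume $a_1 > a_1'$ on the leftmost differing generator $x_{W_1}$. If the differing parameter lies inside the $T$-generator itself, a direct computation (using the explicit form of $T(\vec a,\vec b)$ and that its entries determine $\vec a, \vec b$) forces all parameters to agree. Otherwise $x_{W_1}$ is some $e_i$, and factoring out $e_i(a_1 - a_1')$ from the left exhibits the common matrix as lying simultaneously in $W(W_1 \cdots W_i L)$ and in a cell corresponding to a strictly shorter word $W_2 \cdots W_i L$ — contradicting Theorem~\ref{thm:n-2-disjoint}. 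Surjectivity onto the image is immediate from the definition of $W(W)$.

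For bicontinuity: the forward map $x_W$ is continuous because matrix entries are polynomials in the parameters. For the inverse, I would recover the parameters one at a time from the left. The tail block $x_L$ is a homeomorphism onto its image via an explicit rational inverse (reading $\vec a, \vec b$ off the entries of $T$ and peeling off the $e_{n-1}, e_{n-2}$ factors). For the leading generator $x_{W_1} = e_{i_1}(a_1)$, Lemma~\ref{bruhat-description} forces $a_1$ to be the maximal value keeping the matrix $(n-2)$-nonnegative and in the correct Bruhat cell, and by Lemma~\ref{lem:tech} this equals the minimum over the relevant order-$\le n-2$ row-solid minors of a ratio of minors — hence a minimum of finitely many continuous functions of the entries, so continuous. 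Recursing on $e_{i_1}(-a_1)X$ yields all parameters as continuous functions, so $x_W^{-1}$ is continuous.

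The main obstacle is the base of this recursion and the interaction with the $T$-block: one must check that at every stage the ``maximal admissible parameter'' characterization of Lemma~\ref{bruhat-description} applies cleanly — i.e. that after stripping the already-determined generators the remaining matrix genuinely lies in the smaller cell, so that the next $e_i$-parameter is again pinned down as a minimum of minor-ratios rather than being underdetermined — and that when the leading generator's parameter happens to be one of the $e_{n-1}, e_{n-2}$ attached to $T$ (the case governed by relation~\ref{n-2-cellsplit}), the splitting into three subcases does not spoil continuity of the inverse. This last point is handled by the fact that the three pieces of~\ref{n-2-cellsplit} are distinguished by the sign of the $[1,n-1],[2,n]$ minor, a continuous function, so on each piece the inverse is given by a single continuous formula and the pieces glue along the vanishing locus of that minor.
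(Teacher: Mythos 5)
Your proposal follows exactly the strategy the paper intends: the paper gives no separate argument here, stating only that the proof is ``essentially the same as the $(n-1)$ case,'' i.e.\ Theorem~\ref{thm:n-1-homeo}, and your reduction to a single representative, the injectivity argument via cancelling a leading generator and invoking disjointness of cells (Theorem~\ref{thm:n-2-disjoint}), and the recovery of the inverse as a minimum of minor-ratios via Lemmas~\ref{bruhat-description} and~\ref{lem:tech} reproduce that argument faithfully. Your extra attention to the $T$-block and to the splitting in relation~\ref{n-2-cellsplit} is a reasonable filling-in of details the paper leaves implicit, and introduces no gap.
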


Now, we can consider taking the closure.
This is more complex than the $(n-1)$ case, since the generator $T$ does decompose into other cells.
\begin{lemma} \label{lem:T-params}
    By setting parameters of $T$ to zero, we get matrices that correspond to permutations that are below at least one of the permutations described below in the Bruhat order.
    \begin{enumerate}[label=(\alph*)]
        \item $T_1^i = e_{n-3} \cdots e_1 \;e_{n-1}\cdots \hat{e_{i}}\cdots e_2$, where $2 \leq i \leq n-1$.
        \item $T_2^i = e_{n-2} \cdots \hat{e_{i}} \cdots e_1 \;e_{n-1}\cdots \hat{e_{i+1}}\cdots e_2,$ where  $1 \leq i \leq n-3$.
    \end{enumerate}
    Generators with a cap represent missing generators.
\end{lemma}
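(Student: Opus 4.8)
The plan is to reduce the statement to the case in which exactly one parameter of $T$ is sent to zero, identify the Bruhat cell obtained in that case by reading off which solid minors vanish, and then recover the general case from the closure structure of Bruhat cells.

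The first step is a rigidity observation: the solid minors of $T(\vec a,\vec b)$ are given by the formulas of Appendix~\ref{app:minors} (each is either subtraction-free or equal to $|T_{[1,n-1],[2,n]}|=-a_1\cdots a_{n-3}b_1\cdots b_{n-2}$), so on the stratum $\{T(\vec a,\vec b): p=0,\ \text{the other parameters}>0\}$ of a single parameter $p\in\{a_1,\ldots,a_{n-3}\}\cup\{b_1,\ldots,b_{n-2}\}$ the vanishing pattern of the solid minors is constant. By Lemma~\ref{bruhat-description} this stratum therefore lies in a single Bruhat cell $B^-_{w_p}$, and since every parameter of $T$ divides $|T_{[1,n-1],[2,n]}|$, which is nonzero on $W(T)=B^-_c$, we get $w_p\neq c$. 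The content of the step is then to inspect the minor formulas and, for each parameter separately, determine exactly which solid submatrices of $T|_{p=0}$ drop rank; feeding this vanishing pattern back into Lemma~\ref{bruhat-description} recovers $w_p$. One finds, under a suitable matching of indices, that the parameters $b_\bullet$ produce the permutations $T_1^i$ for $2\le i\le n-1$ and the parameters $a_\bullet$ produce the permutations $T_2^i$ for $1\le i\le n-3$ (a bijection between the $2n-5$ parameters of $T$ and the $2n-5$ listed permutations, with each $B^-_{w_p}$ of codimension one in $W(T)$, though only membership in the list is needed for the lemma). A missing generator $\hat{e_i}$ shows up exactly because, when $p=0$, one solid submatrix of $T$ drops rank while its two immediate ``neighbours'' do not.

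For the general statement, suppose a nonempty set $S$ of parameters is set to zero, giving $M=T|_{S\to 0}$, and pick any $p\in S$. Then $M$ lies in the closure of $B^-_{w_p}$, since $M$ is obtained from a matrix of the one-parameter stratum above by letting the remaining parameters in $S$ tend to $0$. By Lemma~\ref{lem:closure-necc}, the cell $B^-_u$ containing $M$ satisfies $u\le w_p$ in the Bruhat order, and $w_p$ is one of the $T_1^i,T_2^i$ — which is exactly the claim.

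The main obstacle is the index bookkeeping in the middle step: for each of the $2n-5$ parameters one must pin down precisely which solid submatrices of $T|_{p=0}$ become singular and then recognize the resulting $w$-NE-bounded matrix as $B^-_{T_1^i}$ or $B^-_{T_2^i}$ with the correct $i$. This is mechanical given the explicit minor list but requires care for the ``edge'' parameters $b_1,b_{n-2},a_1,a_{n-3}$, where the parametrization of $T$ is non-generic (missing or collapsed bands; for instance $b_{n-2}=0$ zeroes out the entire last column), so that the naive count of dropped minors must be verified by hand.
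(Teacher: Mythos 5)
Your overall strategy (identify the Bruhat cell of each one-parameter stratum, then dispose of several vanishing parameters by a closure argument with Lemma~\ref{lem:closure-necc}) is a different route from the paper, which instead exhibits, for each single parameter set to zero, an explicit Chevalley factorization of the resulting matrix with explicit positive parameters -- these factorizations being exactly the words $T_1^i$ and $T_2^i$. Your closure reduction is fine. The gap is in the middle step, which is the actual mathematical content of the lemma. Lemma~\ref{bruhat-description} characterizes membership in $B^-_w$ through the vanishing/nonvanishing of $w$-NE-ideal and shifted $w$-NE-ideal minors, and these are in general \emph{not} solid minors; so constancy of the vanishing pattern of the solid minors of $T|_{p=0}$ (which is all that Appendix~\ref{app:minors} gives you) neither places the stratum in a single Bruhat cell via that lemma, nor allows you to ``feed the vanishing pattern back'' into it to recover $w_p$. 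To salvage the scheme you would need the vanishing pattern of all relevant minors -- the paper's remark that every minor of $K(\vec a,\vec b)$, hence essentially of $T(\vec a,\vec b)$, other than $\left|T_{[1,n-1],[2,n]}\right|$ is subtraction-free would at least give constancy of the full pattern on the stratum and hence a well-defined cell $B^-_{w_p}$ -- and then an actual computation of the NE-ideal/rank data matching $w_p$ with the permutation of $T_1^i$ or $T_2^i$.

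That identification is precisely the point you assert with ``one finds, under a suitable matching of indices, \dots'' but never carry out, and the heuristic that a missing $\hat{e_i}$ corresponds to one solid submatrix dropping rank while its neighbours do not is not a substitute for it. By contrast, the paper's verification is a finite direct check of a stated word with stated parameters (e.g.\ $e_{n-3}(a_{n-3})\cdots e_1(a_1)\,e_{n-1}(X)e_{n-2}(b_{n-3})\cdots e_2(b_1)$ when some $b_i=0$), and those explicit parametrized factorizations also deliver the converse containment (every matrix with such a factorization arises from $T$ with a parameter set to zero), which is what makes Proposition~\ref{prop:T-closure}(3) immediate later on; your argument, even once repaired, only gives the forward inclusion stated in the lemma.
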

\begin{proof}
If one of the $b_i$'s in the parametrization of $T$ is $0$, then it is straightforward to verify by computation that the word
$$e_{n-3}(a_{n-3}) \cdots e_1(a_1) \;e_{n-1}(X) e_{n-2}(b_{n-3})\cdots \hat{e_{i+1}(b_i)}\cdots e_3(b_2) e_2(b_1)$$
describes a factorization for the matrix $T(\vec{a},\vec{b})$. Note that $X$ can be expressed in terms of $\vec{a}$ and $\vec{b}$ and $Y = 0$ in this case. Conversely, every matrix of such a factorization, for $2\leq i \leq n-1$, is in the closure of $T$ and corresponds to the matrix $T(\vec{a},\vec{b})$ where $b_{i-1} = 0$.

If one of the $a_i$'s is $0$, then the factorization is
$$e_{n-2}(B_{n-3}) e_{n-3}(A_{n-3}) \cdots e_{i+1}(A_{i+1}) \hat{e_{i}(a_i)} \cdots e_1(a_1) \;e_{n-1}(X) \cdots e_{i+2}(B_{i+1}) \hat{e_{i+1}(b_{i})}\cdots e_2(b_1),$$
where $A_{i+1} = a_{i+1}+c_i$, $B_{k} = a_kb_k/A_{k}$ for $i+1 \leq k \leq n-3$ and $A_k= a_k+b_{k-1}-B_{k-1}$ for $i+2 \leq k \leq n-3$. $X$ is the usual polynomial calculated from $\vec{a}$ and $\vec{b}$, computed after setting $a_i = 0$, and finally, $B_{n-2}$ is given by $b_{n-3} - B_{n-3} $. Every matrix of such a factorization with $1\leq i \leq n-3$ corresponds to the the matrix where $a_i$ is set to $0$.
\end{proof}
We note that though the $T_1^i$'s and $T_2^i$'s are distinguished here based on whether a $b_i$ or an $a_i$ is $0$, the factorization given by $T^{n-1}_1$ is identical to what would be $T_2^{n-2}$, were it defined, so for ease of notation in our following statements we will sometimes refer to $T_2^{n-2}$.

The statement above inspires us to extend the partial Bruhat ordering on words to another partial ordering on all words, including those with a $T$-generator.
We define the subwords of $T$ to be the reduced words described in Lemma~\ref{lem:T-params}. 
This naturally extends to a general subword order, as described in Section~\ref{subsec-n-1}.
As in the $n-1$ case,  $A \leq B \implies W(A) \subset \overline{W(B)}$ by mapping the closure of the parameter space to the closure of the cell.
Further, every element of the closure of the cell can be achieved by setting parameters to zero, which follows from Lemma~$\ref{lem:T-params}$.

We prove that this subword order exactly describes the closures of cells.
To prove this, we will describe the closure of $\Lambda = \{T,(n-1)T,(n-2)T,(n-2)(n-1)T, (n-1)(n-2)T\}$ in two ways, through subwords and through determinants, and together these will give a straightforward characterization.

Note that throughout the rest of the section we will conflate Bruhat order on permutations with subword order in words in the alphabet $\mathbb{T}$, when the generator $T$ is not present in the words.
Though this is technically an abuse of notation, we will often switch between the two without comment since the two orders are identical in a natural way.
This means that for a word $A \in \mathbb{T}$, if it contains no $T$ letters, we can think of $A$ as both a word and as a permutation.  Thus, we will talk about $W(A)$ as well as $A(i)$.

\begin{proposition} \label{prop:T-closure}
\hfill
\begin{enumerate}
    \item If a matrix $M$ is in the closure of an element $\lambda \in \Lambda$, then the cell $B_m^-$ corresponding to $M$ must satisfy $m \leq \alpha$.
    \item Further, if $\lambda = T$, $(n-1)T$, $(n-2)T$, or $(n-2)(n-1)T$, and $M$ is TNN, then we also require that, respectively, $m(1) \not\in \{n-1,n\}$ and $m(2) \neq n$; $m(1) \not\in \{n-1,n\}$; $m(1),m(2) \neq n$; or $m(1) \neq n$.
    \item The closure of $\lambda \in \Lambda$ contains the TNN cells below $\lambda$ in the subword order.
    \item The TNN cells below $\lambda \in \Lambda$ in the subword order are exactly the ones that satisfy the conditions in (2).
\end{enumerate}
\end{proposition}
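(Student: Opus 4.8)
The four parts divide into a geometric half, (1)--(3), which places $\overline{W(\lambda)}$ inside the Bruhat stratification, and a combinatorial half, (4), which matches the resulting conditions against the subword order; I would prove them in that order. For (1): every $\lambda\in\Lambda$ satisfies $W(\lambda)\subseteq\overline{W(e_{n-1}e_{n-2}T)}$ (the $e_{n-1},e_{n-2}$ prefixes are obtained by dropping letters, and $W((n-2)(n-1)T)$ is one of the three pieces of $W(e_{n-1}e_{n-2}T)$ by~\ref{n-2-cellsplit}), so it is enough to bound $\overline{W(e_{n-1}e_{n-2}T)}$. By~\ref{n-2-cellsplit} this set is $W((n-2)(n-1)T)\sqcup U(\beta)\sqcup U(\alpha)$; one checks from the displayed form of $T$ that it is $\alpha$-NE-bounded, so $W(T)\subseteq B^-_\alpha$ by Lemma~\ref{bruhat-description}, and since $s_{n-1}\alpha$ and $s_{n-2}\alpha$ are both $<\alpha$, left multiplication by $e_{n-1}$ and $e_{n-2}$ cannot leave $\overline{B^-_\alpha}$; together with $\beta<\alpha$ (its reduced word $(n-2)\cdots1\,(n-1)\cdots2$ is a subword of $\alpha$'s) this gives $\overline{W(e_{n-1}e_{n-2}T)}\subseteq\overline{B^-_\alpha}$, which equals $\bigsqcup_{m\le\alpha}B^-_m$ by the Bruhat-cell-closure argument of Lemma~\ref{lem:closure-necc}.

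For (2) I would tabulate, for each of the four matrices $T$, $e_{n-1}(u)T$, $e_{n-2}(v)T$, $e_{n-2}(v)e_{n-1}(u)T$, which northeast entries vanish identically as the positive parameters vary: $T$ is pentadiagonal unitriangular, so $M_{1,j}=0$ for $j>3$ and $M_{2,j}=0$ for $j>4$ for all $M\in W(T)$, and left multiplication by $e_{n-1}$ or $e_{n-2}$ only alters the last two rows, which loosens the first-row constraint in the expected way. These vanishings persist on the closure, and for a totally nonnegative $M$ a vanishing entry propagates to a vanishing northeast rectangular minor by Lemma~\ref{lem:zero-shadow}; such a minor $|M_{I,J}|=0$ with $(I,J)$ a northeast rectangle rules out, via Lemma~\ref{bruhat-description}, every cell $B^-_m$ in which $(I,J)$ is an $m$-NE-ideal, and unwinding $m[i,j]=|\{a\le i:m(a)\ge j\}|$ this is precisely a constraint $m[1,j]=0$ or $m[2,n]=0$, i.e.\ a bound on $m(1)$ or the statement $m(2)\ne n$; case by case these read off as the four conditions listed. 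Total nonnegativity is genuinely needed here (without it $T\in B^-_\alpha$ with $\alpha(1)=n$): it is exactly what upgrades ``an entry vanishes'' to ``a northeast shadow of minors vanishes.''

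Part (3) is immediate from the principle recorded before the proposition, that $A\le B$ in the subword order forces $W(A)\subseteq\overline{W(B)}$: apply it to the totally nonnegative words $A\le\lambda$. For part (4), the inclusion ``$\subseteq$'' goes as follows: for each degeneration word $T_1^i,T_2^i$ of Lemma~\ref{lem:T-params} -- and, for the $\lambda$ carrying prefixes, the corresponding degenerations, most conveniently obtained by first rewriting $e_{n-1}T$ and $e_{n-2}T$ as $Te_2$ and $Te_1$ via~\ref{n-2-rels} and then degenerating -- I would compute the underlying permutation in one-line notation by tracking the images of positions $1$ and $2$; one finds $T_1^i$ has first entry $n-2$, $T_2^i$ has first entry $i\le n-3$, and in every case the second entry is $\le n-1$, so each such permutation lies below $\alpha$ (its reduced word is a subword of $(n-2)\cdots1\,(n-1)\cdots1$, by the Subword Property) and satisfies the relevant condition of~(2). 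Since those conditions have the special shape ``$m[1,j]=0$''/``$m[2,n]=0$'', which by Lemma~\ref{lem:bb-bruhat} descend to every $m'\le m$, they pass to the entire downward closure, giving ``$\subseteq$''.

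The hard direction is ``$\supseteq$'': given a totally nonnegative permutation $m$ with $m\le\alpha$ and the condition of~(2), one must exhibit a reduced word for $m$ as a subword of one of these degeneration words. I would case on $m(1)$: $m\le\alpha$ already forces any reduced word of $m$ to split along the two blocks $(n-2)\cdots1$ and $(n-1)\cdots1$ of $\alpha$ (as in the proof of Theorem~\ref{thm:n-2-cells}, where the weak and strong intervals below $\alpha$ coincide with the Bruhat order on a copy of $S_{n-1}$); if $m(1)=i\le n-3$ one shows such a word is in fact a subword of $T_2^i$ (or its $\lambda$-translate), while if $m(1)=n-2$ one uses $m(2)$ to pick a $j$ with $m\le T_1^j$, the hypothesis $m(2)\ne n$ (or its analogue) being exactly what removes the last obstruction to fitting the word into the smaller degeneration. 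I expect this verification to be the main obstacle -- a finicky reduced-word/Bruhat-order argument -- and the cleanest route is probably to identify the maximal elements of $\{m\le\alpha:\text{condition }(2)\}$ and check they are precisely the permutations of the $T_1^i,T_2^i$ (and their $\lambda$-translates), so that ``$\supseteq$'' reduces to the statement that this set equals the union of the principal order ideals below its maximal elements, again via the Subword Property. Combining, ``$\subseteq$'' (from (1),(2)) and ``$\supseteq$'' (from (4) and (3)) sandwich the totally nonnegative cells of $\overline{W(\lambda)}$ between the two descriptions and force them all to agree.
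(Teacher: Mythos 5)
Your parts (1) and (3) are fine (part (1) takes a slightly different route than the paper, bounding $\overline{W(e_{n-1}e_{n-2}T)}$ via the split \ref{n-2-cellsplit} and the standard fact that multiplying by $e_i$ with $\ell(s_i\alpha)<\ell(\alpha)$ cannot leave $\overline{B^-_\alpha}$, where the paper simply applies Lemma~\ref{lem:closure-necc} to $W(\lambda)\subseteq B^-_\alpha$), but part (2) rests on a mechanism that cannot work. You propose to derive the four condition sets from identically vanishing \emph{entries} in rows $1$ and $2$. Two problems. First, left multiplication by $e_{n-1}$ or $e_{n-2}$ modifies only rows $n-1$ and $n-2$; for $n\geq 6$ it does not touch rows $1,2$ at all, so it does not ``loosen the first-row constraint'' --- the entry-vanishing pattern in the top rows is literally the same for all four cells $T$, $(n-1)T$, $(n-2)T$, $(n-2)(n-1)T$, and hence cannot produce four \emph{different} sets of conditions on $m$. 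Second, even for a single cell those entry conditions translate, via NE-corner ranks, into statements about where the small values $1,2$ sit in $m$ (i.e.\ about $m^{-1}(1),m^{-1}(2)$ being small), not into the statements $m(1)\notin\{n-1,n\}$, $m(2)\neq n$ of the proposition, which concern the values at positions $1,2$ being large. Conditions of the latter type are detected only by the \emph{large} northeast minors: the paper's proof observes that the order-$(n-1)$ minor on rows $[1,n-1]$, columns $[2,n]$ (equal to $|K(\vec a,\vec b)|<0$) is negative on each of the four listed cells, hence must vanish at any totally nonnegative limit, and that the auxiliary order-$(n-2)$ minors on rows $[1,n-2]$, columns $[2,n-1]$ and rows $[2,n-1]$, columns $[3,n]$ vanish identically on exactly the cells where the extra conditions $m(1)\neq n-1$, $m(2)\neq n$ are claimed; Lemma~\ref{bruhat-description} then converts these minor vanishings into the stated permutation conditions. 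Your Lemma~\ref{lem:zero-shadow} shadow argument gives no access to these minors, so (2) as you outline it fails, and with it the ``$\subseteq$'' matching in (4), which you check against the conditions of (2).

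Separately, the ``$\supseteq$'' half of (4) --- exhibiting every $m\leq\alpha$ satisfying the conditions as a subword of some $T_1^i$, $T_2^i$ (or its $\lambda$-translate) --- is exactly the part the paper does carry out, by a case analysis on which letter of the two descending runs of $\alpha=(n-2)\cdots(1)(n-1)\cdots(1)$ is deleted; you flag it as the main obstacle but do not supply it, so even granting (2) your argument for (4) is a plan rather than a proof.
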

Together, this tells us that the subword order reflects the topological structure via the closure order, at least up to elements in $\Lambda$.
\begin{proof} \hfill
\begin{enumerate}
    \item This follows from Lemma~\ref{lem:closure-necc}.

    \item In all of the cells listed, the $[2,n],[1,n-1]$ minor is negative.
    Thus, for a TNN matrix to be in the closure, this minor is zero, unless we are in the cell $(n-1)(n-2)T$, in which case it can be arbitrary, so we do not need the condition.
    By Lemma~\ref{bruhat-description}, the requirements $m(1) \neq n$, $m(1) \neq n-1$, and $m(2) = n$ are equivalent to, respectively, $M_{[2,n],[1,n-1]}$, $M_{[2,n-1],[1,n-2]}$, and $M_{[3,n],[2,n-1]}$ having determinant zero for every $M$ in $V(m)$.
    Depending on the cell, elements in the cell may have these conditions hold true, and it occurs precisely in the manner described above.

    \item This follows immediately from Lemma~\ref{lem:T-params}.

    \item This argument uses the fact that the elements below $m$ in the Bruhat order are precisely the subwords of some particular reduced expression for $m$ (the subword property).
    This will consist of mostly computations, but we will sketch the important parts, and suppress the actual calculations.

    First, we can see from Lemma~\ref{lem:bb-bruhat} that if $\sigma$ satisfies the set of conditions corresponding to any particular $\lambda \in \Lambda$, as specified in (2), then so does everything below $\sigma$.
    We will use this fact throughout.
    We begin with $(n-2)(n-1)T$, which has the fewest conditions.

    Recall that $\alpha$ was defined as $\alpha = (n-2)\cdots(1)(n-1)\cdots(1)$.
    Notice that $\alpha$ sends 1 to $n$ (it is easy to see this by considering $(i)$ as the function swapping $i$ with $i+1$).
    Now, consider some subword $m \leq \alpha$.
    In order for $m$ to satisfy $m(1) \neq n$, we must remove at least one letter from the $(n-1)\cdots(1)$ portion of the word.
    If we remove the $(1)$, then the result is a reduced word.
    If we remove something else, such as $(i)$, then the reduced expression for $m$ is below the expression we get when we just remove $(1)$, as can be seen by computation.
    Thus, the $m$ must be below
        \[ (n-2)\cdots(1)(n-1)\cdots(2) \]
    All $m$ that satisfy this condition also satisfy the condition that $m(1) \neq n$, so we have shown that these exactly define our desired cells.
    This can be formed from a subword of $(n-2)(n-1)\alpha$:
        \[ (n-2)\cdots(1)(n-1)\cdots(2) = (n-2)(n-1)T_1^{n-2} \]
    Now, consider adding the condition that $m(1) \neq n-1$, as is the case for $(n-1)T$.
    $\alpha$ does not satisfy this condition, so we must consider proper subwords.
    Obviously, we must remove a letter from the first descending sequence.
    From computation it turns out that this is all we need, and we get that any reduced subword that satisfies our conditions is below one of
        \[ (n-2)\cdots(\hat{i})\cdots(1)(n-1)\cdots(2) = (n-1)T_2^i \qquad 1 \leq i \leq n-2 \]
    Similarly, if we have the condition that $m(1),m(2) \neq n$, as in the $(n-2)T$ case, we simply need to remove a letter from the second descending sequence, and reduced subwords must be below one of
        \[ (n-2)\cdots(1)(n-1)\cdots(\hat{i})\cdots(2) = (n-2)T_1^i \qquad 1 \leq i \leq n-2 \]
    Finally, for the $T$ case, if we add both conditions, we can consider the words above, except removing one of the letters in the first descending sequence. This gives precisely the words below $T$.
    We can see this from rewriting the above as
        \[ (n-2)\cdots(i)(n-1)\cdots(i+1)(i-1)\cdots(1)(i-1)\cdots(2) \]
    We must remove a letter in the first or third descending sequence of the above word; if we remove one from the first sequence, then the first half is a subword of the word achieved from removing $(n-2)$ (as in $T_1$).
    If we remove a letter from the third sequence, then the second half is a subword of the word achieved from removing $(i-1)$ (as in $T_2$).

    Finally, $\alpha = (n-1)(n-2)T_2^{n-2}$.  The cells below $\alpha$ are exactly those that have been described above.
\end{enumerate}
\end{proof}
Now, we will consider the closures of our cells.
Note that any element in the closure of a cell must be $(n-2)$-nonnegative and unitriangular, so we know the closure must be contained in the disjoint union of some set of cells.
We will show that the cells of subwords are enough, and because we know these are contained in the closure, we will get that the closure is precisely this union of these cells.
That is, the poset given by subword order is equal to the poset given by closure order (defined as $A \leq B \iff W(A) \subset \overline{W(B)}$).

\begin{theorem}
    The closure $\overline{W(B)}$ exactly consists of all $W(A)$ for all $A\leq B$ in our defined subword order.
\end{theorem}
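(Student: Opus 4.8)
The plan is to reduce to two cases: cells $W(w')$ with no $T$-generator (which are honest TNN Bruhat cells, where the statement is classical) and cells $W(B)$ with $B = w'\lambda$ for $\lambda \in \Lambda$. For the TNN cells, the closure statement follows from the classical fact that $\overline{U(w')}=\bigsqcup_{u\le w'}U(u)$ together with the identification (noted in the excerpt) of Bruhat order on permutations with subword order on words of $\mathbb{T}$ not containing $T$. So the whole content is in the cells containing a $T$. Since $n-1$ and $n-2$ commute past everything but each other, it suffices to prove the statement for the five representatives in $\Lambda$ and then conjugate by $w'$ on the left.

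For $B=\lambda\in\Lambda$, I would argue by double inclusion. The inclusion $\bigsqcup_{A\le\lambda}W(A)\subseteq\overline{W(\lambda)}$ is already in hand: by Lemma~\ref{lem:T-params} every subword of $T$ is realized by sending some parameter to $0$, and more generally $A\le B\implies W(A)\subset\overline{W(B)}$ by continuity of the parameter map on the closure of the parameter domain. For the reverse inclusion, take $M\in\overline{W(\lambda)}$. Then $M$ is $(n-2)$-nonnegative and unitriangular, hence lies in $W(A')$ for a unique word $A'$ in $\mathbb{T}$ by Theorem~\ref{thm:n-2-disjoint}. I claim $A'\le\lambda$. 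If $A'$ contains no $T$, then $M$ is TNN, and by Proposition~\ref{prop:T-closure}(1) the permutation $m=A'$ satisfies $m\le\alpha$, while Proposition~\ref{prop:T-closure}(2) shows $m$ satisfies the additional vanishing conditions attached to $\lambda$ (these minors are identically signed, hence vanish in the closure); then Proposition~\ref{prop:T-closure}(4) gives exactly $A'\le\lambda$ in subword order. If $A'$ does contain a $T$, I need to rule this out unless $A'=\lambda$ — here I would use that the determinant-type data and the $[2,n],[1,n-1]$ minor are locally constant enough on $\overline{W(\lambda)}$ that a strictly smaller $T$-cell cannot appear except as one of the $T_1^i,T_2^i$ already covered, and those already satisfy $T_j^i\le T\le\lambda$.

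The main obstacle is the reverse inclusion in the subcase where $A'$ still contains a $T$-generator: one must show that no ``unexpected'' pentadiagonal cell sneaks into $\overline{W(\lambda)}$. The cleanest route is to show the union $\bigsqcup_{A\le\lambda}W(A)$ is already closed in the space of $(n-2)$-nonnegative unitriangular matrices. Since each $W(A)$ is homeomorphic to an open ball of dimension $\ell(A)$ (the preceding homeomorphism theorem) and the top cell $W(\lambda)$ is a manifold of dimension $\ell(\lambda)$ whose frontier lies in lower-dimensional strata, a dimension/CW-skeleton argument — combined with Proposition~\ref{prop:T-closure} controlling precisely which TNN strata are adjacent — forces $\overline{W(\lambda)}\setminus W(\lambda)$ to be contained in $\bigsqcup_{A<\lambda}W(A)$. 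Then, because $W(\lambda)$ is itself in the closure and the subword-below cells are in the closure, equality follows, and conjugating by $w'$ on the left (which is a homeomorphism induced by the relations \ref{n-2-rels}, preserving subword order) yields the general statement.
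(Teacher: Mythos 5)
Your proposal contains two genuine gaps. First, the reduction ``prove it for $\lambda\in\Lambda$ and then conjugate by $w'$ on the left'' is not justified and, as stated, does not work: left multiplication by the fixed family $U(w')$ is not a homeomorphism of the space and does not commute with taking closures. In particular, $\overline{W(w'\lambda)}$ contains cells $W(a\lambda')$ in which $a$ is a \emph{proper} subword of $w'$ (the parameters of the $w'$ letters may themselves tend to $0$), and these are not of the form $U(w')\cdot\overline{W(\lambda)}$. The paper never makes this reduction; it treats a general reduced word $B=b\lambda$ directly, using Lemma~\ref{lem:cell-mult} together with Lemma~\ref{lem:closure-necc} to conclude that any cell meeting $\overline{W(B)}$ lies in a standard Bruhat cell $B_p^-$ with $p\leq b\alpha$, and then uses the explicit minor characterization of Lemma~\ref{bruhat-description} to force $a\leq b$ for the non-TNN cells $W(a\lambda')$ that could occur.

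Second, the step you yourself flag as the main obstacle --- ruling out ``unexpected'' cells containing a $T$ --- is not actually closed by your dimension/CW-skeleton argument. Knowing that each cell is an open ball and that the frontier of $W(\lambda)$ lies in other (necessarily lower-dimensional) cells says nothing about \emph{which} lower-dimensional cells appear; dimension counting cannot exclude, say, $W(a\lambda')$ with $\lambda'\not\leq\lambda$ or with $a$ not a subword of $b$. What does the work in the paper is concrete minor bookkeeping: a minor that vanishes identically on a cell vanishes on its closure, and the letters $(n-1)$, $(n-2)$ in $\lambda$ are detected exactly by nonvanishing of the $[1,n-1],[1,n-1]$ and $[2,n],[2,n]$ minors (and the sign of the $[2,n],[1,n-1]$ minor separates the elements of $\Lambda$), which yields $\lambda'\leq\lambda$; the same style of argument via Lemma~\ref{bruhat-description} gives $a\leq b$. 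Your phrase that these minors are ``locally constant enough'' gestures at this but is not an argument. Also note a conflation in your sketch: the words $T_1^i,T_2^i$ of Lemma~\ref{lem:T-params} are TNN words (subwords of $T$), not non-TNN cells containing a $T$-generator, so they do not address the problematic subcase. Your treatment of the TNN subcase via Proposition~\ref{prop:T-closure}(2),(4) is essentially the paper's, and the easy inclusion is fine; but as it stands the forward inclusion for general $B$ is not established.
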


\begin{proof}
    We know the backwards direction; that is, if $A \leq B$ in the subword order, then $W(A) \subset \overline{W(B)}$.
    This uses the fact that the $x_B$ maps the closure of the parameter space into the closure of the image, and so we can set parameters to zero (in other words, take subwords) to get full cells in the closure.

    So, it suffices to show that if $W(A)$ intersects $\overline{W(B)}$, then $A \leq B$ in the subword order.
    If both $A$ and $B$ are TNN, then this is a known result.
    The situation that $A$ is not TNN but $B$ is TNN cannot occur, since an element in the closure of $B$ must be TNN.

    Now, suppose $B$ is not TNN, so $B$ can be written as $b\lambda$ a reduced word where $b$ does not include $(n-1)$ or $(n-2)$ and $\lambda \in \Lambda$.
    When $A$ is also not TNN, and is split into $a\lambda'$ in a similar way, Lemma~\ref{lem:cell-mult} and properties of the standard Bruhat decomposition require that $a\alpha \leq b\alpha$.
    The non-TNN cells in standard Bruhat cells $B_p^-$ for $p \leq b\alpha$ are precisely $W(s_1s_2)$, where $s_1 \leq b$ and $s_2 \in \Lambda$.
    
    By looking at the minors for different values of $s_2$, it is easy to see that we must have $\lambda' \leq \lambda$.
    For example, $\overline{(n-1)T}$ cannot intersect $(n-1)(n-2)T$, since elements of $(n-1)T$, and thus its closure, must have the top-left large minor be zero, which is always positive in $(n-1)(n-2)T$.
    The same argument with the minor description given by Lemma~\ref{bruhat-description} also works to show that we must have $a \leq b$.
    
    From here, the possible cells that could intersect the closure of $W(B)$ are precisely $W(A)$ for $A$ a subword of $B$.

    When $A$ is TNN, then notice that the restrictions in Lemma~\ref{prop:T-closure}(2) corresponding to $\lambda$ must also apply to $A$.
    Thus, we must take a subword of $\alpha$ where these identities hold.
    By Lemma~\ref{lem:closure-necc}, these are exactly the subwords of $T$.
\end{proof}

All of the above work gives us enough structure to say the following:
\begin{corollary}
    Both the coarse and the fine cells form a CW-complex of the space of $(n-2)$-nonnegative unitriangular matrices.
\end{corollary}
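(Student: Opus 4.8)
The plan is to verify the axioms of a CW complex directly, using the length function $\ell$ as the dimension assigned to each cell. Almost everything needed is already available. By Theorems~\ref{thm:n-2-cells} and~\ref{thm:n-2-disjoint} — together with the remark that the coarse/fine choice does not affect the arguments — the sets $W(A)$, as $A$ ranges over the listed reduced words, are pairwise disjoint and cover the space of $(n-2)$-nonnegative unitriangular matrices; the theorem that each $x_A$ is a homeomorphism identifies $W(A)$ with $\mathbb{R}_{>0}^{\ell(A)}$, hence with an open ball of dimension $\ell(A)$; and the theorem that $\overline{W(B)}$ is the disjoint union of the $W(A)$ with $A \le B$, together with Lemma~\ref{lem:T-params}, describes each cell closure. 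Since every proper subword $A' < A$ loses at least one parameter — one deletes a Chevalley letter, or replaces the letter $T$ by one of the strictly shorter words $T_1^i$ or $T_2^i$ — we get $\ell(A') < \ell(A)$, so $\overline{W(A)} \setminus W(A)$ is a finite union of cells of strictly smaller dimension. As the decomposition has finitely many cells, closure-finiteness and the weak-topology condition hold automatically. The only remaining task is to supply, for each $k$-cell $W(A)$, a characteristic map: a continuous surjection $\Phi_A\colon \overline{B^k}\to\overline{W(A)}$ restricting to $x_A$ on $\mathrm{int}\,B^k$ and carrying $\partial B^k$ into the $(k-1)$-skeleton.

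I would build the $\Phi_A$ by induction on $k = \ell(A)$, assembling the space skeleton by skeleton; the content of the inductive step is that $\partial W(A) := \overline{W(A)}\setminus W(A)$, with the inherited cell structure, is a topological $(k-1)$-sphere, or a cellular quotient of one, and that $x_A$ extends continuously across it. On the part of $\partial W(A)$ coming from subwords that do not involve $T$, this is a lower interval in the Bruhat order on $S_n$, and the classical fact that the Bruhat decomposition of unitriangular totally nonnegative matrices is a regular CW complex \cite{Hersh2014} applies directly. On the part involving the faces $T_1^i$, $T_2^i$, $(n-1)(n-2)T_1^i$, and so on, I would argue from the explicit words in Lemma~\ref{lem:T-params}, the minor computations in Proposition~\ref{prop:T-closure}, and the three-way relation~\ref{n-2-cellsplit}, checking that these codimension-one faces and their own lower boundaries assemble into a sphere; the tool here is that the relevant lower intervals in the subword poset are thin and shellable (each codimension-two cell lies on exactly two codimension-one faces, and the faces admit a shelling), which is the standard criterion for recognizing a ball or sphere from a Bruhat-like poset. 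Having identified $\partial W(A)$ with $S^{k-1}$ (up to a collapse), one chooses the identification $\mathrm{int}\,B^k\cong\mathbb{R}_{>0}^k$ so that approaching a given face corresponds to the appropriate degeneration of the parameters, and continuity of the extension at the bounding sphere then reduces to two facts already in hand: setting a parameter to $0$ is a genuine limit inside $GL_n(\mathbb{R})$ that lands exactly in the corresponding face cell (Lemma~\ref{lem:T-params}), and $x_A$ is a homeomorphism on each face.

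The step I expect to be the main obstacle is the boundary analysis of the cells containing $T$. First, the three inequivalent degenerations recorded in~\ref{n-2-cellsplit} force a cell such as the coarse $W(e_{n-1}e_{n-2}T)$ to be attached to its boundary not along an embedded sphere but along the image of one under a cellular map that collapses a codimension-one face (the separating wall $W(\beta)$); one must check that this is still a legitimate characteristic map — which is precisely why the statement asserts a CW complex rather than a regular one — and that it is consistent with the fine picture, in which that wall is itself a cell. Second, unlike the totally nonnegative case the space is not compact: the entries of $T(\vec a,\vec b)$ can be taken arbitrarily large, so $\overline{W(A)}$ may fail to be compact and a $k$-cell cannot literally be a continuous image of $\overline{B^k}$ whose image is its own topological closure. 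The standard remedy, which I would adopt, is to carry out the entire construction on the link of the identity (equivalently, intersect the decomposition with a small sphere about $I$) and then take the cone; the partition, the disjointness, and all the closure relations above survive passing to the link and adjoining the cone point, so only routine bookkeeping is added. Combining this with the homeomorphism theorem for the $x_A$, the closure theorem, Lemma~\ref{lem:T-params}, Proposition~\ref{prop:T-closure}, and the sphere-recognition input then yields the CW structure for both the coarse and the fine decompositions.
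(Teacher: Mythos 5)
Your proposal diverges from what the paper actually does: the paper treats this corollary as immediate from the preceding results --- the partition into cells (Theorems~\ref{thm:n-2-cells} and~\ref{thm:n-2-disjoint}), the fact that each $x_A$ is a homeomorphism onto $W(A)\cong\mathbb{R}_{>0}^{\ell(A)}$, and the closure theorem saying $\overline{W(B)}$ is exactly the union of the $W(A)$ with $A\le B$ in subword order, all of strictly smaller length --- and it invokes no characteristic maps, links, or shellability. Your attempt at a strict verification is more ambitious, but the specific machinery you lean on breaks down. The sphere-recognition input you cite --- that the relevant lower intervals of the subword poset are thin and shellable, ``each codimension-two cell lies on exactly two codimension-one faces'' --- is false precisely where you need it: as the paper's closing Remark computes, for $n>4$ the rank-two interval $[(n-2)\cdots(3)(n-1)\cdots(3),\,T]$ below $T$ contains a single intermediate element, namely $T_2^1$, so the poset is not thin (hence not Eulerian), and the boundary of $W(T)$ itself cannot be analyzed by the regular-CW ball-or-sphere criterion you propose. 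You concede non-regularity only for the coarse cell $W(e_{n-1}e_{n-2}T)$ coming from the split relation, but the failure already occurs inside $\overline{W(T)}$, which sits at the bottom of your induction over cells containing $T$.

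The other load-bearing step you defer --- replacing non-compact closures by the link of the identity and coning off, with the claim that ``only routine bookkeeping is added'' --- is not routine: showing that the decomposition restricts to a cell decomposition of a compact link and that each cell is the open cone on a cell of the link is exactly the content that required the Fomin--Shapiro/Hersh program in the totally nonnegative case \cite{Hersh2014}, and neither the paper nor your sketch supplies an analogue for the $(n-2)$-nonnegative semigroup, whose cells involve the extra generator $T$. (Relatedly, Hersh's theorem is a statement about links, not literally about the unbounded cell decomposition of $N_{\geq 0}$, so it does not ``apply directly'' to the $T$-free part of a boundary either.) So the proposal has a genuine gap: the inductive construction of attaching maps rests on a thinness/shellability claim refuted by the paper's own computation and on a cone reduction that is asserted rather than proved. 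The ingredients you cite correctly --- the partition, the homeomorphisms $x_A$, the strict drop of $\ell$ on proper subwords (including the subwords $T_1^i,T_2^i$ of $T$ from Lemma~\ref{lem:T-params}), the closure theorem, and finiteness --- are exactly what the paper uses, and they already yield the corollary in the sense the paper intends.
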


Specifically, the closure poset corresponding to the CW-complex is given by the subword order.

To conclude, we will prove that the poset on cells $W(A)$ is a graded poset.
The choice of coarse or fine cells does not matter here, since the proof is based on the fact that the vast majority of the poset is lifted from the standard Bruhat order.

First, we generalize the exchange condition of $S_n$ to our generating set, where we have the relation $e_ie_i = e_i$, as opposed to the Coxeter relation $s_is_i=1$.
\begin{lemma}
    Let $w$ be a word in $\Lambda = \{1,\ldots,n-1\}$ subject to Chevalley relations, that is, the following relations hold:
    \begin{itemize}
        \item $i\;i \leftrightarrow i$ (the shortening relation)
        \item $i\;j\;i \leftrightarrow j\;i\;j$ if $|i - j| = 1$ (the adjacent relation)
        \item $i\;j \leftrightarrow j\;i$ if $|i - j| > 1$ (the nonadjacent relation)
    \end{itemize}
    Then if $w$ is a reduced word, for $t \in \Lambda$, exactly one of the following is true:
    \begin{itemize}
        \item $tw$ is reduced, so $\ell(tw) = \ell(w) + 1$;
        \item $tw = w$, so $\ell(tw) = \ell(w)$.
    \end{itemize}
\end{lemma}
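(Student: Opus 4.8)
The monoid appearing here is the $0$-Hecke (Demazure) monoid of $S_n$, and the cleanest route is to transfer the statement to standard facts about the Bruhat order. Recall the \emph{Demazure product} on $S_n$: for a simple reflection $s=s_i$ and $v\in S_n$ put $s\ast v = sv$ if $\ell(sv)>\ell(v)$ and $s\ast v=v$ otherwise, with the symmetric convention for right multiplication; this extends to an associative operation making $(S_n,\ast)$ a monoid. I would first define $\psi$ on the free monoid on $\Lambda$ by $\psi(i_1\cdots i_k):=s_{i_1}\ast\cdots\ast s_{i_k}$ and check that it descends to our monoid: the shortening relation is handled by $s_i\ast s_i=s_i$, the adjacent relation by $s_i\ast s_j\ast s_i=s_j\ast s_i\ast s_j$ for $|i-j|=1$, and the nonadjacent relation by $s_i\ast s_j=s_j\ast s_i$ for $|i-j|>1$, all standard identities for $\ast$. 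Write $\bar w:=\psi(w)$. Since each step of a Demazure product raises length by $0$ or $1$, one has $\ell(\bar w)\le |w|$ for every word $w$, where $|w|$ is the number of letters.

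The key technical step is the \textbf{reducedness criterion}: a word $w$ is reduced (minimal length in its equivalence class) if and only if $\ell(\bar w)=|w|$; and then $\bar w=s_{i_1}\cdots s_{i_k}$ is the ordinary product and $\ell(\bar w)=\ell(w)$. One direction is immediate: if $\ell(\bar w)=|w|$, then any $w'$ equivalent to $w$ has $|w'|\ge \ell(\bar{w'})=\ell(\bar w)=|w|$ since $\psi$ is constant on equivalence classes, so $w$ is reduced. For the converse, suppose $\ell(\bar w)<|w|$ and write $w=i_1\cdots i_k$. Processing $s_{i_1}\ast\cdots\ast s_{i_k}$ from the left, let $j$ be the first index at which the running product does not grow; then $u:=i_1\cdots i_{j-1}$ is reduced with $\bar u=s_{i_1}\cdots s_{i_{j-1}}$ and $s_{i_j}$ is a right descent of $\bar u$. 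By the Exchange Property, $\bar u$ has a reduced expression ending in $s_{i_j}$; by the Word Property \cite{bjorner-brenti}, that expression is obtained from $u$ by braid moves, which are relations in our monoid. Applying those moves to the prefix of $w$ and then the shortening relation $i_ji_j\to i_j$ produces a word equivalent to $w$ of length $k-1$, so $w$ is not reduced.

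To finish, let $w$ be reduced and $t\in\Lambda$, so $\psi(tw)=s_t\ast\bar w$. In $S_n$ one always has $\ell(s_t\bar w)=\ell(\bar w)\pm 1$, giving exactly two cases. If $\ell(s_t\bar w)=\ell(\bar w)+1$, then $\psi(tw)=s_t\bar w$ has length $|w|+1=|tw|$, so by the criterion $tw$ is reduced with $\ell(tw)=\ell(w)+1$; and $tw\ne w$ because $\psi(tw)=s_t\bar w\ne\bar w=\psi(w)$. If $\ell(s_t\bar w)=\ell(\bar w)-1$, then $\psi(tw)=\bar w=\psi(w)$; since $s_t$ is a left descent of $\bar w$, the Exchange and Word Properties let us rewrite $w$ by braid moves as $t\,u$ for some $u$, whence $tw=t\,t\,u=t\,u=w$ by the shortening relation, so $\ell(tw)=\ell(w)$ and $tw$ is not reduced because $|tw|=\ell(w)+1>\ell(w)=\ell(\psi(tw))$. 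The two cases are mutually exclusive since they force different values of $\ell(tw)$.

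The main obstacle is the reducedness criterion, which rests on the well-definedness/associativity of the Demazure product together with the Exchange and Word Properties of $S_n$; granting those standard facts the rest is bookkeeping. Alternatively, one may simply invoke the known structure of the $0$-Hecke monoid $H_0(S_n)$ — that it is in length-preserving bijection with $S_n$ via reduced words — and read off the lemma directly.
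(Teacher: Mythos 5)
Your proof is correct, but it takes a genuinely different route from the paper. You transfer everything to the $0$-Hecke (Demazure) monoid: you check the three Chevalley relations are respected by the Demazure product $\ast$, establish the criterion that a word is reduced iff the length of its image in $S_n$ equals its letter count (via the Exchange and Word Properties), and then read off the dichotomy from the standard fact $\ell(s_t\bar w)=\ell(\bar w)\pm 1$; in the descent case you use Tits' Word Property to rewrite $w$ by braid moves as $t\,u$ and absorb the extra $t$ with the shortening relation. The paper instead argues directly on rewriting sequences: starting from a sequence of local moves taking $tw$ to a reduced word (borrowed from the Coxeter-group analogue, Theorem 3.3.1 of Bj\"orner--Brenti, with $s_is_i=1$ replaced by the shortening relation), it defines a tracking function $\varphi$ recording the position of the distinguished letter $t$, and shows that every length-shortening move must involve $t$ and that exactly one such move occurs, so the terminal reduced word is equivalent to $w$. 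Your approach buys concision and leans on well-known structure (the length-preserving bijection between $H_0(S_n)$ and $S_n$ indexed by reduced words essentially contains the lemma), at the cost of invoking the well-definedness and associativity of $\ast$ as citations; the paper's letter-tracking argument is more self-contained and elementary, at the cost of the bookkeeping in $\varphi$. Both hinge on the same Coxeter-theoretic inputs (Exchange/Word Properties), so neither is circular, and your case analysis is exhaustive and mutually exclusive as required.
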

\begin{proof}
    Suppose $tw$ is not reduced. Let $M = (m_1,m_2,\ldots,m_r)$ be a sequence with $tw = m_1$, each $m_k$ at most one local move away from the previous, with no $i \to i\;i$ moves, and $m_r$ a reduced word.

    To see that there always exists such a sequence, consider the following. We know such a sequence exists for Coxeter groups from \cite{bjorner-brenti} (cf. Theorem 3.3.1). Mimic this sequence until we hit an $s_is_i=1$ relation. Here, use the shortening relation instead, and continue with the resulting smaller word.

    We will use a function $\varphi$ to indicate a sort of location for $t$ as we move along the sequence $M$.  Define $\varphi: M \to [\ell(w)+1] \cup \varnothing$ recursively in the following way:
    \begin{align*}
        \varphi(m_1) &= 1 \\
        \varphi(m_i) &= \begin{cases}
            \varnothing & \text{$t$ is in shortening relation from $m_{i-1}$ to $m_i$ or } \varphi(m_{i-1}) = \varnothing \\
            \varphi(m_{i-1}) \pm 1 & \text{$t$ is in left/right position of nonadjacent relation used from $m_{i-1}$ to $m_i$} \\
            \varphi(m_{i-1})\pm 2 & \text{$t$ is in left/right position of adjacent relation used from $m_{i-1}$ to $m_i$} \\
            \varphi(m_{i-1})-1 & \text{$t$ to the right of shortening relation used from $m_{i-1}$ to $m_i$} \\
            \varphi(m_{i-1}) & \text{otherwise}
        \end{cases}
    \end{align*}
    \begin{lemma}
        The following properties are true:
        \begin{enumerate}[label=(\alph*)]
            \item $\varphi$ is well-defined.
            \item There are no length-shortening moves that don't involve $t$.
            \item $m_r$ is a reduced word for $w$.
        \end{enumerate}
    \end{lemma}
    \begin{proof} \hfill
        \begin{enumerate}[label=(\alph*)]
            \item Because we chose the sequence such that we never get a longer word than $tw$, our codomain is correctly stated. Thus, the only thing to check for well-definedness is whether $\varphi(m_{i-1})$ can ever be in the middle of an adjacent relation.

            Let $n_i$ be $m_i$ with the $\varphi(m_i)^{\text{th}}$ letter in the word removed, where we take out nothing if $\varphi(m_i) = \varnothing$. That is, we take out the $t$ from the word, and $\varnothing$ signifies that the $t$ no longer exists. Then notice that $w = n_1$, and each $n_i$ is at most one local move away from $n_{i-1}$. The reason for this is that removing $t$ does not affect any local moves not involving $t$, and the local moves that do involve $t$ don't affect anything except $t$. Suppose we do have a move where the location of the $t$ is in the center. Then if we consider the $n_i$ up to that point, we get that there is an $n_i$ with two adjacent identical letters:
            \[ m_i = \cdots j\; t\; j \cdots \implies n_i = \cdots j\;j \cdots \]
            However, this would imply that $w$ can be reduced to a word of shorter length. This is a contradiction.

            \item The same reasoning applies. Consider the $n_i$. If there was a length-shortening move then obviously we would get that $n_i$ is a series of moves that shortens $w$, which is not possible.

            \item We must have a shortening relation to get a reduced word. This relation must contain $t$, so there must be exactly one. Notice that once $\varphi(m_i) = \varnothing$, $m_i \equiv n_i$ modulo the Chevalley equivalence relations. We know that $w \equiv n_i$ for all $i$. Thus, $m_i \equiv w$.
        \end{enumerate}
    \end{proof}
    This gives us the statement.
\end{proof}

\begin{theorem}
    The Bruhat poset is graded.
\end{theorem}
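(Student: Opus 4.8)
The plan is to exhibit the length function $\ell$ as a rank function for the closure order (which, by the preceding results, coincides with the subword order): it suffices to show that whenever $A \lessdot B$ is a covering relation one has $\ell(B) = \ell(A) + 1$, and then to note that every interval has a top and bottom and argue that all maximal chains in it have length $\ell(B)-\ell(A)$. I would split the verification according to the three possible shapes of a cover $A \lessdot B$: (i) both $A$ and $B$ are TNN; (ii) both are of the form $w'\lambda$ with $\lambda$ containing the letter $T$; (iii) $B = w'\lambda$ is non-TNN while $A$ is TNN. Case (i) is classical: the TNN cells form a lower order ideal of the poset (the closure of a TNN unitriangular matrix is again TNN unitriangular), this ideal is the Bruhat order on $S_n$, which is graded by the number of inversions, and for a reduced word in the $e_i$ that number equals $\ell$.

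For case (ii), I would first show — using Theorem~\ref{thm:n-2-cells}, which enumerates all reduced words with one $T$, together with Lemma~\ref{lem:cell-mult} and the minor criteria from the proof of Theorem~\ref{thm:n-2-disjoint} — that the sub-poset of cells $w'\lambda$ is, in the subword order, the direct product of the Bruhat interval $[\mathrm{id}, w_{0,[n-2]}]$ with the Boolean lattice $B_2$ on the two commuting letters $\{(n-1),(n-2)\}$; here one uses that $(n-1)$ and $(n-2)$ commute with everything else in this setting, that $w'$ necessarily lies in the parabolic $S_{n-2}$, and that each word $w'\lambda$ is reduced with $\ell(w'\lambda) = \ell(w') + \ell(\lambda)$. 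Both factors are graded, hence so is the product, and since $\ell(w'\lambda) = \mathrm{inv}(w') + \ell(\lambda) = (\text{product rank}) + (2n-5)$ is an affine function of the product rank, $\ell$ restricts to a rank function on this sub-poset; so covers of type (ii) drop $\ell$ by one.

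Case (iii), the interface between the two families, is the step I expect to be the main obstacle. A cell $w'\lambda$ can only cover a TNN cell by shrinking the macro-letter $T$ inside $\lambda$, and by Lemma~\ref{lem:T-params} the maximal TNN subwords of $T$ are exactly the words $T_1^i$ and $T_2^i$, each of length $2n-6 = \ell(T)-1$, which one must also check are genuinely reduced (so that their parameter count equals their Bruhat rank); this reduced-word bookkeeping, for $w' T_j^i$ and its variants, is the one genuinely computational ingredient. Combined with Proposition~\ref{prop:T-closure}, which describes the TNN part of $\overline{W(w'\lambda)}$, and Lemma~\ref{lem:bb-bruhat}, which shows those TNN cells form a lower ideal with controlled maximal elements, this gives that the maximal TNN cells below $w'\lambda$ have length exactly $\ell(w'\lambda)-1$, so covers of type (iii) also drop $\ell$ by one.

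Finally, to rule out any interval in which a saturated chain has smaller length than $\ell(B)-\ell(A)$ — equivalently, to upgrade "covers drop $\ell$ by one" to "every interval is graded" — I would run the standard chain/lifting argument from Coxeter theory (the proof that Bruhat order is ranked), with the ordinary exchange property replaced by the generalized exchange condition established in the lemma immediately above, namely that left-multiplying a reduced word by a generator either increases its length by one or returns the same word of the same length. That lemma governs the $e_i$-portion of any word, and the three cases above govern the single position occupied by $T$; together they show $\ell$ is a global rank function, so the poset is graded.
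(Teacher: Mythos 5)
Your overall skeleton matches the paper's proof (classical grading on the TNN part; the cells containing $T$ forming a product of a Bruhat interval with a Boolean lattice; the interface as the crux, to be attacked with the generalized exchange lemma), and your cases (i) and (ii) are fine. The gap is in case (iii). You claim that the maximal TNN cells below $w'\lambda$ all have length exactly $\ell(w'\lambda)-1$, citing Proposition~\ref{prop:T-closure} and Lemma~\ref{lem:bb-bruhat}; but Proposition~\ref{prop:T-closure} only treats the words $\lambda\in\Lambda$ with \emph{empty} prefix, and the item you file under ``reduced-word bookkeeping'' --- that $w'T_j^i$ is reduced --- is false in general, and its failure is exactly the hard case. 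For $n=5$ and $w'=(2)$, the word $(2)T_1^2=(2)(2)(1)(4)(3)$ collapses (via $e_ie_i=e_i$) to the TNN cell of $(2)(1)(4)(3)$, of length $4=\ell((2)T)-2$, and by Lemma~\ref{lem:bb-bruhat} this permutation is \emph{not} below either reduced length-$5$ word $(2)T_2^1$ or $(2)T_2^2$. So the naive inventory of TNN cells below $(2)T$ obtained from the single representation $(2)T$ does not yield your claim; to see that this length-$4$ cell is nonetheless not covered by $W((2)T)$ one must produce an intermediary, and the available intermediaries are either non-TNN (here $W(T)$ itself, and in general $w''T$ for a subword $w''$ of $w'$) or come from a different representation of the class of $(2)T$ (e.g.\ $T(4)$ via the relation $e_iT=Te_{i+2}$, giving the length-$5$ TNN cell of $T_1^2(4)$). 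Producing these intermediaries is precisely what the paper's argument does: when $wt$ is not reduced ($t$ a subword of $T$), repeated use of the generalized exchange condition rewrites $wt$ as a reduced $w''t$ with $w''$ a subword of $w$, whence $W(wt)\subset\overline{W(w''T)}\subsetneq\overline{W(wT)}$ and the pair is not a cover. Your sketch never carries out this step, so the key assertion of case (iii) is unsupported (and, as stated relative to the representation $w'\lambda$ alone, false).

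Relatedly, your closing paragraph misallocates the exchange lemma. Once every covering relation is known to change $\ell$ by exactly one, gradedness is immediate --- every saturated chain from $A$ to $B$ automatically has length $\ell(B)-\ell(A)$ --- so there is nothing to ``upgrade.'' The generalized exchange condition is not a finishing touch; it is the engine needed inside case (iii) to handle precisely the non-reduced words $w'T_j^i$ described above, which is how the paper uses it.
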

\begin{proof}
    For anything not containing a $T$, this is well-known, one proof being Theorem 2.2.6 of \cite{bjorner-brenti}.
    We know that when we only consider words containing $T$, the restriction gives a poset that is isomorphic to the product poset of an interval in the strong Bruhat poset with the Boolean algebra on 2 elements (corresponding to containment of $(n-1)$ and $(n-2)$).
    Both of these are graded, and it is easy to see that the rank function is equivalent to the sum of the rank functions of the individual posets.
    Thus, the Bruhat poset being graded implies that restricting to this case gives a graded poset.

    By inspection of the subwords of $T$, the interval up to $T$ is also a graded poset. Now, all that is left is to consider working between words with $T$ and without $T$.

    Now, suppose that $wT$ is reduced but reducing $T$ to some subword $t$ makes $wt$ not reduced. We want to show that there is a chain between $wT$ and $wt$ that behaves correctly with respect to our rank function.

    Let $w = w_1\cdots w_a$. Then consider $w_i\cdots w_a t$, starting from $i = a$ to $i = 1$. Using the exchange property, we can see that this reduces to some $w' t$, where $w'$ is a subword of $t$. Thus, this has the intermediary $w' T$, and from the lemma we get intermediaries as desired.
\end{proof}
In addition to being ranked, the poset of the CW-complex of the semigroup of unitriangular totally nonnegative matrices is Eulerian. However, the closure poset of our CW-complex is not Eulerian in general.
\begin{remark}
    The closure poset on cells of $(n-2)$-nonnegative unitriangular matrices is Eulerian for $n \leq 4$.
    For $n > 4$, the poset is not Eulerian: by computation using Lemma \ref{lem:bb-bruhat}, the interval $[(n-2)\cdots(3)(n-1)\cdots(3),T]$ has only three elements, the middle one being $T_2^1$.
\end{remark}

As a result, some questions arise: is the closure poset still shellable, or perhaps semi-Eulerian?
What is the structure of our spaces, since they are not triangulations of a sphere?

As another thing to consider, recall that we can make the choice of whether to use coarse or fine cells in the $n-2$ case.
If we choose the coarse option at a certain level, it must be consistent with all the options above.
Thus, we can consider taking refinements between coarse and fine.
This is a topic for future research, and we have no current results about these.

Finally, it seems reasonable that this poset and cell structure extends to general unitriangular $k$-nonnegative matrices.\footnote{Relatedly, we could consider using our results in the unitriangular case to extend the results to the general $(n-2)$-nonnegative case, but the two unitriangular semigroups, along with diagonal matrices, do not generate the whole space alone, and so it is unclear whether this approach could lead to a natural description of the semigroup.}
This may motivate a more thorough investigation of the general case, where elementary arguments about vanishing minors and $k$-irreducibility begin to fail.

\section*{Acknowledgements}

This research was carried out as part of the 2017 REU program at the School of Mathematics at University of Minnesota, Twin Cities.
Sunita Chepuri, Joe Suk, and Ewin Tang are grateful for the support of NSF RTG grant DMS-1148634 and Neeraja Kulkarni for the support of NSF grant DMS-1351590.
The authors would like to thank Pavlo Pylyavskyy and Victor Reiner for their guidance and encouragement.
They are also grateful to Elizabeth Kelley for her many helpful comments on the manuscript and to Anna Brosowsky and Alex Mason for their help and support working on $k$-nonnegativity and writing a joint manuscript.

\appendix

%=================================================
\section{Minors in Full} \label{app:minors}
%=================================================

\begin{lemma*} \label{lem:minors-list}
    The solid minors of $K(\vec{a},\vec{b})$ are as follows. First, the minors on the subdiagonal and superdiagonal:
    \begin{align*}
        \left|K(\vec{a},\vec{b})_{[i,j],[i+1,j+1]}\right| &= \prod_{k=i}^j a_kb_k \\
        \left|K(\vec{a},\vec{b})_{[i,n-1],[i+1,n]}\right| &= b_1\cdots b_{n-1}\prod_{k=i}^{n-1} a_kb_k \\
        \left|K(\vec{a},\vec{b})_{[i+1,j+1],[i,j]}\right| &= 1
    \end{align*}
    Then, the principal minors:
    \begin{align*}
        \left|K(\vec{a},\vec{b})_{[i,j],[i,j]}\right| &= \sum_{k=i-1}^j\left(\prod_{\ell=i}^k b_{\ell-1} \prod_{\ell=k+1}^j a_\ell\right) && i,j < n,\, \text{where } a_{n-1},b_0 = 0 \\
        \left|K(\vec{a},\vec{b})_{[i,n],[i,n]}\right| &= \left(\prod_{k=i}^n b_{k-1}\prod_{k=i}^{n-2} a_k \right)\left|K(\vec{a},\vec{b})_{[2,i-1],[2,i-1]}\right| && i > 2 \\
        &= \left(\prod_{k=i}^n b_{k-1}\prod_{k=i}^{n-2} a_k \right)\sum_{k=1}^{i-1}\left(\prod_{\ell=2}^k b_{\ell-1} \prod_{\ell=k+1}^{i-1} a_\ell\right) \\
        \left|K(\vec{a},\vec{b})_{[2,n],[2,n]}\right| &= 0 \\
        \left|K(\vec{a},\vec{b})\right| & = -a_1\cdots a_{n-2}b_1\cdots b_{n-1}
    \end{align*}
    All other minors are trivially zero.
\end{lemma*}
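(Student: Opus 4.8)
The plan is to exploit the fact that $K(\vec a,\vec b)$ is tridiagonal with all-ones subdiagonal, so that its solid minors are almost entirely governed by triangularity, and only the principal minors require real work. Write a solid minor as $|K_{[i,j],[i',j']}|$ with $j-i=j'-i'$ and set the offset $o:=i'-i=j'-j$. If $o\ge 2$ the first row of the submatrix lives in columns $\ge i+2$, all of which are zero in row $i$ of $K$; if $o\le -2$ the last row of the submatrix is similarly zero. Hence every solid minor with $|o|\ge 2$ vanishes trivially, which disposes of the closing clause, and it remains to handle $o\in\{-1,0,1\}$.

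For $o=-1$ the submatrix $K_{[i+1,j+1],[i,j]}$ is upper triangular with diagonal equal to a stretch of the (all-ones) subdiagonal of $K$, so the minor is $1$. For $o=+1$ the submatrix $K_{[i,j],[i+1,j+1]}$ is lower triangular with diagonal equal to a stretch of the superdiagonal of $K$; multiplying these gives $\prod_{k=i}^{j}a_kb_k$ when the stretch avoids column $n$, and when it ends at column $n$ the last diagonal entry is $K_{n-1,n}=b_{n-1}Y$ instead, which produces the stated product for $\bigl|K_{[i,n-1],[i+1,n]}\bigr|$. Both are one-line computations, consistent with Lemma~\ref{lem:tridiag-minors}.

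The substantive case is $o=0$, the principal minors $C_i(r):=\bigl|K_{[i,i+r-1],[i,i+r-1]}\bigr|$. Expanding along the last row of the submatrix (whose only nonzero entries are the subdiagonal $1$ and the diagonal entry) gives the continuant recurrence $C_i(r)=K_{i+r-1,i+r-1}C_i(r-1)-K_{i+r-2,i+r-1}C_i(r-2)$, the analogue of \eqref{eqn:recurrence}. Under the convention $a_{n-1}=b_0=0$ the entries of $K$ on rows $1,\dots,n-1$ are uniformly $K_{m,m}=a_m+b_{m-1}$ and $K_{m-1,m}=a_{m-1}b_{m-1}$, so for $j\le n-1$ the closed form to be proved, call it $S(i,j)$, need only be checked against this recurrence and the base cases $C_i(0)=1$, $C_i(1)=a_i+b_{i-1}$; this reduces to the short identity $S(i,j)=(a_j+b_{j-1})S(i,j-1)-a_{j-1}b_{j-1}S(i,j-2)$, obtained by splitting off the $k=j$ summand and recognizing $\prod_{\ell=i}^{j-1}b_{\ell-1}$ as the leftover summand of $S(i,j-1)$. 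The conventions $a_{n-1}=b_0=0$ then force all but one summand of $S(1,n-1)$ and of $S(2,n-1)$ to carry a vanishing factor, which is precisely why $\bigl|K_{[1,n-1],[1,n-1]}\bigr|=0$ and (after the computation below) $\bigl|K_{[2,n],[2,n]}\bigr|=0$.

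The last step, and the one I expect to need genuine care, is the principal minors that reach row $n$, where the clean recurrence breaks because $K_{n,n}=b_{n-1}X$ and $K_{n-1,n}=b_{n-1}Y$ are not of the uniform form. Here I would Laplace-expand $\bigl|K_{[i,n],[i,n]}\bigr|$ along its last row (entries $K_{n,n-1}=1$ and $K_{n,n}=b_{n-1}X$), then expand the resulting off-term along column $n$ (whose only surviving entry is $b_{n-1}Y$), obtaining $\bigl|K_{[i,n],[i,n]}\bigr|=b_{n-1}\bigl(X\,C_i(n-i)-Y\,C_i(n-1-i)\bigr)$; substituting the closed forms and collapsing them with the telescoping continuant identity $C_i(r)-a_{i+r-1}C_i(r-1)=b_{i-1}b_i\cdots b_{i+r-2}$ (itself immediate from the same split) yields the stated monomial-times-smaller-principal-minor expression, and the same two nested expansions with $i=1$ give $\det K=-a_1\cdots a_{n-2}b_1\cdots b_{n-1}$. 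Tracking the $b_{n-1}X,\ b_{n-1}Y$ entries through these two expansions and matching the continuant combinations to the advertised products is the only delicate bookkeeping; everything else is triangularity plus the elementary continuant recurrence.
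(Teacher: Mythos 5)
You have the right strategy, and it is essentially the ``direct computation'' the paper leaves implicit: triangularity disposes of all solid minors with offset $|o|\ge 2$ and gives the subdiagonal and superdiagonal families; the closed form for $\bigl|K_{[i,j],[i,j]}\bigr|$ with $j\le n-1$ is correctly verified against the continuant recurrence \eqref{eqn:recurrence} (your identity $S(i,j)=(a_j+b_{j-1})S(i,j-1)-a_{j-1}b_{j-1}S(i,j-2)$ does hold); and the two nested Laplace expansions giving $\bigl|K_{[i,n],[i,n]}\bigr|=b_{n-1}\bigl(X\,C_i(n-i)-Y\,C_i(n{-}1{-}i)\bigr)$ are correct. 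From this, the facts actually used in the body of the paper, namely $\bigl|K_{[1,n-1],[1,n-1]}\bigr|=\bigl|K_{[2,n],[2,n]}\bigr|=0$ and $\det K=-a_1\cdots a_{n-2}b_1\cdots b_{n-1}$, follow exactly as you indicate.

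The gap is in your final step, where you assert that the collapse ``yields the stated monomial-times-smaller-principal-minor expression'': it does not, and if you carry your own computation to the end you will find that two of the displayed formulas cannot be reproduced because they are off by one. Using $C_i(n-i)=b_{i-1}\cdots b_{n-2}$ (forced by $a_{n-1}=0$) and the splitting $X=b_1\cdots b_{i-2}\,C_i(n{-}1{-}i)+a_{i-1}\cdots a_{n-2}\,\bigl|K_{[2,i-2],[2,i-2]}\bigr|$, your expansion gives, for $2<i\le n$, $\bigl|K_{[i,n],[i,n]}\bigr|=\bigl(\prod_{k=i}^{n}b_{k-1}\bigr)\bigl(\prod_{k=i-1}^{n-2}a_k\bigr)\bigl|K_{[2,i-2],[2,i-2]}\bigr|$, not $\bigl(\prod_{k=i}^{n}b_{k-1}\prod_{k=i}^{n-2}a_k\bigr)\bigl|K_{[2,i-1],[2,i-1]}\bigr|$. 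Concrete check: for $n=4$, $i=3$ the submatrix has rows $(b_2,\;b_3b_1b_2)$ and $(1,\;b_3(a_2+b_1))$, so the minor is $b_2b_3(a_2+b_1)-b_1b_2b_3=a_2b_2b_3$, whereas the stated formula gives $b_2b_3(a_2+b_1)$; at $i=n$ the stated formula would give $b_{n-1}Y$ for the entry $K_{n,n}=b_{n-1}X$. Likewise your (correct) triangular computation of $\bigl|K_{[i,n-1],[i+1,n]}\bigr|$ yields $b_1\cdots b_{n-1}\prod_{k=i}^{n-2}a_kb_k$, not the stated $b_1\cdots b_{n-1}\prod_{k=i}^{n-1}a_kb_k$ (there is no parameter $a_{n-1}$). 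So the ``delicate bookkeeping'' step as described would fail: an honest execution of your plan proves a corrected version of these two lines, and you should flag the discrepancy rather than claim agreement. None of this affects the vanishing minors, the determinant, or the subtraction-freeness of the remaining minors that the paper relies on.
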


%=================================================
\section{Relations in Full} \label{app:rels}
%=================================================

First, the $k = n-1$ relations (that is, for $K(\vec{a},\vec{b})$):

In these relations, the variables on the right-hand side are expressed in terms of the variables on the left-hand side.
\begin{enumerate}[label=(\arabic*)]
\item $e_i(x) K(\vec{a},\vec{b}) = K(\vec{A},\vec{B}) e_{i+1}(x'), \text{ where } 1 \leq i \leq n-2$:

The following equalities hold for $i < n-2$.
\begin{align*}
    \vec{A} &= \left(a_1,\ldots,a_{i-1},a_i + x, \frac{a_ia_{i+1}}{a_i + x}, a_{i+2},\ldots,a_{n-2}\right) \\
    \vec{B} &= \left(b_1,\ldots,b_{i-1},b_i+\frac{xa_{i+1}}{a_i+x}, \frac{b_ib_{i+1}(a_i+x)}{b_i(a_i+x)+xa_{i+1}},b_{i+2},\ldots,b_{n-1}\right) \\
    x' &= \frac{b_{i+1} a_{i+1} x}{b_i(a_i+x)+xa_{i+1}}.\\
    \intertext{and in the other direction,}
     \vec{a} &= \left(A_1,\ldots,A_{i-1},\frac{A_iA_{i+1}B_{i+1}+A_iA_{i+1}x'}{A_{i+1}B_{i+1}+A_{i+1}x'+B_ix'}, A_{i+1}+\frac{B_ix'}{B_{i+1}+x'}, A_{i+2},\ldots,A_{n-3}\right) \\
    \vec{b} &= \left(B_1,\ldots,B_{i-1},\frac{B_iB_{i+1}}{B_{i+1}+x'}, B_{i+1}+x', B_{i+2},\ldots,B_{n-2}\right) \\
    x &= \frac{x'A_iB_i}{A_{i+1}B_{i+1}+A_{i+1}x'+B_ix'}
\intertext{and when $i = n-2$, we have}
    \vec{A} &= \left(a_1,\ldots,a_{n-3},a_{n-2} + x\right) \\
    \vec{B} &= \left(b_1,\ldots,b_{n-2},\frac{b_{n-1}a_{n-2}}{a_{n-2}+x}\right) \\
    x' &= \frac{b_{n-1}\cdots b_2b_1x}{b_{n-2}(a_{n-2}+x)} \\
      \intertext{and in the other direction,}
         \vec{a} &= \left(A_1,\ldots,A_{n-3},\frac{A_{n-2}B_1\cdots B_{n-1}}{B_1\cdots B_{n-1}+x'B_{n-2}}\right) \\
    \vec{b} &= \left(B_1,\ldots,B_{n-2},B_{n-1}+\frac{x'}{B_1\cdots B_{n-3}}\right) \\
    x &= \frac{A_{n-2}B_{n-2}x'}{B_1\cdots B_{n-1}+x'B_{n-2}}
\end{align*}

\item $e_{n-1}(x) K(\vec{a},\vec{b}) = K(\vec{A},\vec{B}) f_{n-1}(x') h_{n-1}(c)$:
\begin{align*}
    c &= \frac{Y}{Y + xX} = \frac{1}{1 + x'X} \\
    \vec{A} &= \vec{a} \\
    \vec{B} &= \left(b_1,\ldots,b_{n-3},\frac{b_{n-2}}{c}, b_{n-1}\right) \\
    x' &= \frac{x}{Y}
\end{align*}

\item $ f_{i+1}(x) K(\vec{a},\vec{b}) = h_{i+2}(1/w) K(\vec{A},\vec{B}) f_i (x) h_{i}(w), \text{ where } 1 \leq i \leq n-2$:
\begin{align*}
\intertext{when $1 \leq i < n-2$, we have:}
    w &= \frac{1}{1+xa_{i+1}+xb_i} \\
    \vec{A} &= \left(a_1,\ldots,a_{i-2},a_{i-1},a_i(xa_{i+1}+1),\frac{a_{i+1}(xa_{i+1}+xb_i+1)}{1+xa_{i+1}}, \frac{a_{i+2}}{xa_{i+1}+xb_{i+1}+1},a_{i+3},\ldots,a_{n-2}\right) \\
    \vec{B} &= \left(b_1,\ldots,b_{i-2},b_{i-1}(xa_{i+1}+xb_i+1), \frac{b_i}{xa_{i+1}+1},\frac{b_{i+1}(1+xa_{i+1})}{xa_{i+1}+xb_i+1}, b_{i+2},\ldots,b_{n-1}\right) \\
    \intertext{and for the other direction:}
      w &= \frac{1}{1+xA_{i+1}+xB_i} \\
     \vec{a} &= \left(A_1,\ldots,A_{i-2},A_{i-1},\frac{A_i(1+xB_i)}{xA_{i+1}+xB_{i}+1},\frac{A_{i+1}}{1+xB_i}, A_{i+2}(xA_{i+1}+xB_{i}+1),A_{i+3},\ldots,A_{n-2}\right) \\
    \vec{b} &= \left(B_1,\ldots,B_{i-2},\frac{B_{i-1}}{xA_{i+1}+xB_i+1}, \frac{B_i(xA_{i+1}+xB_i+1)}{1+xB_i},B_{i+1}(1+xB_i), B_{i+2},\ldots,B_{n-1}\right)
    \intertext{and when $i = n-2$:}
    w &= \frac{1}{1+xb_{n-2}} \text{ and } \vec{A} = \vec{a} \\
    \vec{B} &= \left(b_1,\ldots,b_{n-4},b_{n-3}(xb_{n-2}+1), b_{n-2},\frac{b_{n-1}}{xb_{n-2}+1} \right)
\end{align*}

\item $f_{1}(x) K(\vec{a},\vec{b}) = K(\vec{A},\vec{B}) e_{1}(x') h_1(c)$:
\begin{align*}
    \vec{A} &= \left(\frac{a_1}{1+xa_1},a_2,\ldots,a_{n-2}\right) \\
    \vec{B} &= \vec{b} \\
    x' &= xb_1a_1 \\
    c &= \frac{1}{1+xa_1}
\end{align*}
For the other direction, we use $a_1 = A_1+\frac{A_1x'}{B_1}$ and $c = \frac{B_1}{B_1+x'}$.
\item
    $h_i(x) K(\vec{a}, \vec{b}) = K(\vec{A}, \vec{B}) h_{i-1}(x), \text{ where } 2 \leq i \leq n :$
    \begin{align*}
    \vec{A} &= \left(a_1,\ldots,a_{i-1},xa_i,\frac{a_{i+1}}{x}, a_{i+2},\ldots,a_{n-3}\right) \\
    \vec{B} &= \left( b_1,\ldots,xb_{i-1}, \frac{b_i}{x}, b_{i+1}, b_{i+2},\ldots,b_{n-2}\right)
\end{align*}
\item
    $h_1(x) K(\vec{a}, \vec{b}) = K(\vec{A}, \vec{B}),$ where
    $\vec{A} = \left(xa_1, a_2, \ldots ,a_{n-3}\right)$
    and $\vec{B} = \vec{b}$.
\item
    $K(\vec{a},\vec{b})h_n(x) = K(\vec{A},\vec{B})$, where
    $\vec{A} = \vec{a}$ and
    $\vec{B} = \left(b_1,\ldots,b_{n-3},xb_{n-2}\right)$.
\end{enumerate}

Now, we list the $k = n-2$ relations (that is, for the $T(\vec{a},\vec{b})$ parameter family):
\begin{enumerate}[label=(\arabic*)]
\item
$    e_i(x) T(\vec{a},\vec{b}) = T(\vec{A},\vec{B}) e_{i+2}(x'), \text{ where } 1 \leq i \leq n-3 .$

\begin{align*}
    \vec{A} &= \left(a_1,\ldots,a_{i-1},a_i+x,\frac{a_ia_{i+1}}{a_i+x}, a_{i+2},\ldots,a_{n-3}\right) \\
    \vec{B} &= \left(b_1,\ldots,b_{i-1},b_i+\frac{xa_{i+1}}{x+a_i}, \frac{b_ib_{i+1}(x+a_i)}{b_i(a_i+x)+xa_{i+1}}, b_{i+2},\ldots,b_{n-2}\right) \\
    x' &= \frac{b_{i+1}a_{i+1}x}{b_i(a_i+x)+xa_{i+1}}
\end{align*}
In the other direction, we have:
\begin{align*}
    \vec{a} &= \left(A_1,\ldots,A_{i-1},\frac{A_iA_{i+1}B_{i+1}+A_iA_{i+1}x'}{A_{i+1}B_{i+1}+A_{i+1}x'+B_ix'}, A_{i+1}+\frac{B_ix'}{B_{i+1}+x'}, A_{i+2},\ldots,A_{n-3}\right) \\
    \vec{b} &= \left(B_1,\ldots,B_{i-1},\frac{B_iB_{i+1}}{B_{i+1}+x'}, B_{i+1}+x', B_{i+2},\ldots,B_{n-2}\right) \\
    x &= \frac{x'A_iB_i}{A_{i+1}B_{i+1}+A_{i+1}x'+B_ix'}
\end{align*}
\item
    $e_{n-2}(x) T(\vec{a},\vec{b}) = T(\vec{A},\vec{B}) e_1(x').$

Here $\vec{A}$ and $\vec{B}$ satisfy the following recurrence:
\begin{align*}
    B_{n-3} &= b_{n-3}+x \\
    A_{i} &= (a_i\cdot b_i)/B_i, \text{ where } 1 \leq i \leq n-3 \\
    B_i &= a_{i+1} + b_i - A_{i+1}, \text{ where } 1 \leq i \leq n-4 \\
    x' &= a_1 - A_1.
\end{align*}
(Note that $B_{n-3} > b_{n-3}$, and consequently $A_{n-3} < a_{n-3}$. In turn, $B_{n-2} > b_{n-2}$, etc, so that in general $B_i > b_i$ and $A_i < a_i$.)
In the other direction,
\begin{align*}
    a_1 &= x' + A_1 \\
    c_i &= A_iC_i/a_i\text{ where } 1 \leq i \leq n-3 \\
    a_i &= A_i + C_{i-1} - c_{i-1}, \text{ where } 1 \leq i \leq n-4 \\
    x &= C_{n-3} - c_{n-3}.
\end{align*}
(Similarly, $a_1 > A_1$, consequently $c_2 < C_2$. In turn, $a_2 > A_2$, etc, so that in general $a_i > A_i$ and $c_i < C_i$.)

\item $e_{n-1}(x) T(\vec{a},\vec{b}) = T(\vec{A},\vec{B}) e_2(x')$
\begin{align*}
    \vec{A} &= \vec{a} \\
    \vec{B} &= \left(b_1,\ldots,b_{n-3},b_{n-2}+\frac{b_{n-2}}{b_1x},\right) \\
    x' &= \frac{x}{\left|T(\vec{a},\vec{b})_{[3,n-3],[4,n-2]}\right|}
\end{align*}
In the other direction,
\begin{align*}
    \vec{a} &= \vec{A} \\
    \vec{b} &= \left(B_1,\ldots,B_{n-3},\frac{B_{n-2}B_1}{B_1+x'},\right) \\
    x &= x' \left|T(\vec{A},\vec{B})_{[3,n-3],[4,n-2]}\right|
\end{align*}

\item
$e_{n-1}e_{n-2} T = e_{n-2}e_{n-1}T \sqcup e_{n-2}\cdots e_1 e_{n-1}\cdots e_2 \sqcup e_{n-2}\cdots e_1 e_{n-1}\cdots e_1$.

The three factorizations on the right hand side of the equation arise from three possible values of the minor $\left| M_{[2,n][1,n-1]}\right|$, where $M = e_{n-1}(u)e_{n-2}(v) T(\vec{a},\vec{b}).$
\begin{enumerate}
\item When the minor is negative, then we have:
\begin{align*}
(a_1\ldots a_{n-3})\cdot v \cdot (X+ u) &< (a_1\ldots a_{n-3})\cdot (b_{n-2}Y+ vX) \\
\Rightarrow \; v \cdot (X+ u) &< (b_{n-2}Y+ vX)  \\
\Rightarrow \; vu  &< b_{n-2}Y
\end{align*}
Then the matrix $M$ can be factored as follows.

$$e_{n-1}(u) e_{n-2}(v) T(\vec{a},\vec{b}) = e_{n-2}(v)e_{n-1}(u)T(\vec{A},\vec{B}),$$
where $\vec{A} = \vec{a}$
and $\vec{B} = \left(b_1,\ldots,b_{n-3},b_{n-2} - uv/Y \right)$.

\item When the minor is zero, then we have:
\begin{align*}
(a_1\ldots a_{n-3})\cdot v \cdot (X+ u) &= (a_1\ldots a_{n-3})\cdot (b_{n-2}Y+ vX)  \\
\Rightarrow \; v \cdot (X+u) &= b_{n-2}Y+ vX
\end{align*}
Then the matrix is totally nonnegative and can be factored as shown below.
$$e_{n-1}(u) e_{n-2}(v) T(\vec{a},\vec{b}) = e_{n-2}(v) e_{n-3}(a_{n-3})\cdots e_1(a_1) e_{n-1}(X+u) e_{n-2}(b_{n-3})\cdots e_2(b_1) $$
\item When the minor is positive, then we have:
\begin{align*}
 (a_1\ldots a_{n-3})\cdot v \cdot (X+ u) &> (a_1\ldots a_{n-3})\cdot (b_{n-2}Y+ vX)  \\
\Rightarrow \; v \cdot (X+u) &> b_{n-2}Y+ vX
\end{align*}
Again, the matrix is totally nonnegative and can be factored as written below.
$$e_{n-1}(u) e_{n-2}(v) T(\vec{a},\vec{b}) =  e_{n-2}(v') e_{n-3}(A_{n-3})\cdots e_1(A_1) e_{n-1}(X+u) e_{n-2}(B_{n-3})\cdots e_1(B_0),$$
where $\vec{A}$, $\vec{B}$ and $v'$ can be determined from $\vec{a}$, $\vec{b}$, $u$ and $v$, by recursive formulas:
\begin{align*}
    v' &= \frac{b_{n-2}Y+ vX}{X+u} \\
    B_{n-3} &= b_{n-3} + v - v' \\
    A_{i} &= (a_i\cdot b_i)/B_i, \text{ where } 1 \leq i \leq n-3 \\
    B_i &= a_{i+1} + b_i - A_{i+1}, \text{ where } 0 \leq i \leq n-4
\end{align*}
Note that our calculations above show that $v>v'$, and this will show that $a_i>A_i$ and $b_i < B_i$ for all $i$.
\end{enumerate}
\end{enumerate}

\newpage


\begin{thebibliography}{10}

\bibitem{Berenstein-Fomin-Zelevinsky}
Arkady Berenstein, Sergey Fomin, and Andrei Zelevinsky, \emph{Parametrizations
  of canonical bases and totally positive matrices}, Advances in Mathematics
  \textbf{122} (1996), no.~1, 49 -- 149.

\bibitem{bjorner-brenti}
A.~Bj\"{o}rner and F.~Brenti, \emph{Combinatorics of coxeter groups}, Springer,
  2000.

\bibitem{REUreport}
Anna Brosowsky, Neeraja Kulkarni, Alex Mason, Joe Suk, and Ewin Tang,
  \emph{Parametrizations of k-nonnegative matrices}, U. Minnesota REU Report
  (2017).

\bibitem{fallat-johnson-sokal}
Shaun Fallat, Charles~R. Johnson, and Alan~D. Sokal, \emph{Total positivity of
  sums, hadamard products and hadamard powers: Results and counterexamples},
  Linear Algebra and its Applications \textbf{520} (2017), 242 -- 259.

\bibitem{fallat-johnson}
Shaun~M. Fallat and Charles~R. Johnson, \emph{Hadamard powers and totally
  positive matrices}, Linear Algebra and its Applications \textbf{423} (2007),
  no.~2, 420 -- 427.

\bibitem{TNNbook}
\bysame, \emph{Totally nonnegative matrices}, Princeton University Press, 2011.

\bibitem{fomin-zel-bruhat}
Sergey Fomin and Andrei Zelevinsky, \emph{Double bruhat cells and total
  positivity}, Journal of the American Mathematical Society \textbf{12} (1999),
  no.~2, 335--380.

\bibitem{fomin-zel}
\bysame, \emph{Total positivity: Tests and parametrizations}, The Mathematical
  Intelligencer \textbf{22} (2000), no.~1, 23--33.


\bibitem{Hersh2014}
Patricia Hersh, \emph{Regular cell complexes in total positivity}, Inventiones
  mathematicae \textbf{197} (2014), no.~1, 57--114.

\end{thebibliography}
\end{document}